\newcommand{\cmark}{\ding{51}}
\newtheorem{theorem}{Theorem}[section]
\newaliascnt{lemma}{theorem}
\newtheorem{lemma}[lemma]{Lemma}
\newaliascnt{definition}{theorem}
\newtheorem{definition}[definition]{Definition}
\newaliascnt{assumption}{theorem}
\newtheorem{assumption}[assumption]{Assumption}
\newaliascnt{proposition}{theorem}
\newtheorem{proposition}[proposition]{Proposition}
\newaliascnt{example}{theorem}
\crefname{theorem}{theorem}{theorems}
\Crefname{theorem}{Theorem}{Theorems}
\crefname{lemma}{lemma}{lemmas}
\Crefname{lemma}{Lemma}{Lemmas}
\crefname{definition}{definition}{definitions}
\Crefname{definition}{Definition}{Definitions}
\crefname{assumption}{assumption}{assumptions}
\Crefname{assumption}{Assumption}{Assumptions}
\crefname{proposition}{proposition}{propositions}
\Crefname{proposition}{Proposition}{Propositions}
\crefname{example}{example}{examples}
\Crefname{example}{Example}{Examples}
\DeclareMathOperator{\diag}{diag}
\newcommand{\alglinelabel}{
  \addtocounter{ALC@line}{-1}
  \refstepcounter{ALC@line}
  \label
}
\begin{document}
\title{Convergence Analysis of Randomized Subspace Normalized SGD under Heavy-Tailed Noise}
\author[1,2]{Gaku Omiya}
\author[2]{Pierre-Louis Poirion}
\author[1,2]{Akiko Takeda}

\affil[1]{Department of Mathematical Informatics, The University of Tokyo, Tokyo, Japan}
\affil[2]{Center for Advanced Intelligence Project, RIKEN, Tokyo, Japan}

\date{}
\date{}
\maketitle

\begin{abstract}
Randomized subspace methods reduce per-iteration cost; however, in nonconvex optimization, most analyses are expectation-based, and high-probability bounds remain scarce even under sub-Gaussian noise. We first prove that randomized subspace SGD (RS-SGD) admits a high-probability convergence bound under sub-Gaussian noise, achieving the same order of oracle complexity as prior in-expectation results. Motivated by the prevalence of heavy-tailed gradients in modern machine learning, we then propose randomized subspace normalized SGD (RS-NSGD), which integrates direction normalization into subspace updates. Assuming the noise has bounded $p$-th moments, we establish both in-expectation and high-probability convergence guarantees, and show that RS-NSGD can achieve better oracle complexity than full-dimensional normalized SGD.

\end{abstract}

\noindent\textbf{Keywords:} Randomized subspace methods; Stochastic optimization; Nonconvex optimization; Convergence analysis; Heavy-tailed noise.

\section{Introduction}
We consider the following nonconvex stochastic optimization problem:
\begin{align}
    \min_{x \in \mathbb{R}^d} F(x), \qquad
    F(x) := \mathbb{E}_{\xi \sim \mathcal{D}}[f(x,\xi)],
    \label{eq:sto_opt}
\end{align}
where $\mathcal{D}$ denotes an underlying data distribution. Nonconvex
optimization problems encountered in machine learning applications, such as training
deep neural networks, are often high-dimensional in recent years. 
Therefore, there is a growing need for nonconvex optimization algorithms that operate in a low-dimensional subspace at each iteration.

One possible way to reduce the dimension of \eqref{eq:sto_opt} is to work in random low-dimensional subspaces: Using a random matrix $P_k \in \mathbb{R}^{d \times r}$ with some chosen reduced dimension $r$ ($\ll d$), one approximates the full gradient $\nabla F(x_k)$ via directional derivatives along the columns of $P_k$ and performs the update with stepsize $\bar{\eta}$,
\[
  x_{k+1} = x_k - \bar{\eta} P_k P_k^\top \nabla F(x_k).
\]
This subspace-descent approach was developed as a deterministic optimization method minimizing $F$ by \citet{kozak2021}. This randomized subspace technique has also been applied to stochastic optimization using stochastic gradients $g_k$ instead of $\nabla F(x_k)$, where \citet{flynn2024} analyzes the convergence of stochastic gradient descent (SGD) with 
randomized subspace updates under the bounded-variance assumption. To handle deterministic large-scale problems, the optimization community has long relied on randomized coordinate descent methods \cite{nesterov2012,richtarik2014,wright2015}.
These methods update the solution by randomly selecting a single coordinate (or, more generally, a small set of $r$ coordinates) at each iteration.
The random-subspace framework above provides a broader view: it includes randomized (block) coordinate descent as a special case \cite{kozak2021}.
Moreover, \citet{kozak2021} derive high-probability guarantees only for Haar-random subspaces, which demonstrates the advantages of using Haar over random coordinate-descent-type schemes.

In machine learning, such randomized subspace methods can be used in at least two ways.
First, they can be used to reduce the memory footprint of gradient computations via forward-mode automatic differentiation (AD), which is particularly useful in large-scale models where activation memory dominates. 
 Computing the projected gradient $P_k^\top \nabla F(x_k)$ (or $P_k^\top g_k$)  via forward-mode AD requires only an $r/d$ fraction of the cost of computing the full-dimensional gradient $\nabla F(x_k)$ (or $g_k$).
Second, they can serve as a gradient compression technique in distributed optimization, alleviating communication bottlenecks by projecting gradients onto low-dimensional random subspaces before transmission.
In a related but distinct line of work, low-dimensional update ideas have also been explored for memory- and communication-efficient large-scale training, but their primary focus and algorithmic setting differ from ours; see \Cref{app:llm-subspace}.
We refer the reader to \Cref{app:related-work} for further details and background on this setting.

\begin{table*}[t]
\centering
\caption{
Convergence guarantees for stochastic gradient methods with randomized subspace updates (RS) and direction normalization (Norm).
Exp./w.h.p.: in expectation/high probability.
Noise: sub-Gaussian, BV, and $p$-BCM; see \Cref{ass:BV,ass:subgauss,ass:pBCM}, respectively.
}

\label{tab:comparison}
\begin{tabular}{llcc c cc c}
\toprule
Method &
& RS
& Norm
& Noise model
& Exp.
& w.h.p.
\\
\midrule
SGD & \cite{ghadimi2013}
& 
& 
& BV
& \cmark
& 
\\

SGD & \cite{liu2023hp}
& 
& 
& sub-Gaussian
& 
& \cmark
\\

RS-SGD & \cite{flynn2024}
& \cmark
& 
& BV
& \cmark
& 
\\

\textbf{RS-SGD} &  \textbf{(this work)}
& \cmark
& 
& sub-Gaussian
& 
& \cmark
\\
\midrule
NSGD & \cite{huebler2025}
& 
& \cmark
& p-BCM
& \cmark
& \cmark
\\

\textbf{RS-NSGD} & \textbf{(this work)}
& \cmark
& \cmark
& p-BCM
& \cmark
& \cmark
\\
\bottomrule
\end{tabular}
\end{table*}

\paragraph{New convergence analysis to randomized subspace SGD (RS-SGD): }
We investigate the theoretical properties of SGD combined with randomized subspace updates.
Recent work has studied stochastic methods that leverage random subspaces (e.g.,  \citet{flynn2024,han2025efficient}).
However, existing convergence analyses are mostly in expectation, and high-probability guarantees remain limited, largely due to technical challenges.
In this work, we establish high-probability convergence guarantees for stochastic gradient methods with randomized subspace updates under sub-Gaussian gradient noise.
While in-expectation guarantees capture average performance, they do not quantify the probability that a single run deviates substantially from this average; high-probability analyses address this by bounding such deviations with probability at least $1-\delta$ \citep{harvey2019,madden2024}.
In modern machine learning, repeating training across multiple independent runs can be costly, which increases the value of single-run guarantees \citep{liu2023hp}.
Compared to the analysis of \citet{flynn2024}, our results provide high-probability guarantees by adopting a sub-Gaussian noise assumption.
Our contribution is closely related in spirit to \citet{liu2023hp}, which analyzed standard SGD under sub-Gaussian noise; we extend this high-probability perspective to randomized subspace updates.
Now RS-SGD admits convergence guarantees in expectation under the bounded-variance assumption
and high-probability guarantees under a sub-Gaussian noise model.

However, these light-tailed noise conditions can be overly restrictive in modern machine-learning workloads, where stochastic gradients are often heavy-tailed \cite{simsekli2019}; this limitation is not caused by the randomized subspace projection but rather reflects the well-known sensitivity of SGD-type updates to heavy-tailed gradient noise \cite{nguyen2019,hodgkinson2021}.
To overcome this issue, we leverage \emph{normalized} stochastic gradient methods (NSGD),
which admit both in-expectation and high-probability convergence guarantees under the weaker bounded $p$-th moment
assumption stated later in \Cref{ass:pBCM} \cite{huebler2025}.
Unlike clipping-based methods, whose analyses typically require problem- or noise-dependent parameters,
including the tail index $p$ \cite{zhang2020clipped,nguyen2023}, NSGD can be analyzed without knowing $p$.
\paragraph{New randomized subspace method: Randomized Subspace Normalized SGD (RS-NSGD).}
Building upon the randomized subspace framework, we propose a subspace-compatible normalized method.
A direct plug-in hybrid such as
\[
x_{k+1} = x_k - \bar{\eta}\,\frac{P_kP_k^\top g_k}{\|g_k\|}
\]
still requires the full-dimensional norm $\|g_k\|$, and thus falls outside the randomized-subspace model we target.
In contrast, RS-NSGD normalizes using only the available projected gradient $P_k^\top g_k$ and performs the iteration
\[
x_{k+1} = x_k - \bar{\eta}\,\frac{P_kP_k^\top g_k}{\|P_k^\top g_k\|},
\]
thereby preserving the low-dimensional update structure of randomized subspace algorithms while gaining robustness to
heavy-tailed noise.
Because normalization introduces a nonlinear dependence on the random quantity $\|P_k^\top g_k\|$,
existing analytical tools for randomized subspace SGD do not directly apply, substantially complicating the analysis.

Our main contributions are as follows and are summarized in \Cref{tab:comparison}:
\begin{itemize}
   \item \textbf{Benefit of high-probability convergence analysis for RS-SGD.}
  Our analysis extends prior high-probability convergence results for \emph{full-dimensional} SGD (without subspace projection) in \cite{liu2023hp} to the randomized subspace setting, yielding bounds with an explicit logarithmic dependence on $\delta$.
  We complement existing in-expectation analyses of RS-SGD by establishing a high-probability guarantee with the same $\epsilon$-dependence in oracle complexity for reaching an $\epsilon$-stationary point.

  \item \textbf{Convergence guarantees for RS-NSGD (in expectation and high probability).}
  For the proposed \emph{randomized subspace normalized SGD} (RS-NSGD), we derive convergence guarantees in expectation with sharp dependence on the matrix-smoothness parameter.
  In addition, we also establish high-probability convergence guarantees. Moreover, our bounds clarify a theoretical basis for when RS-NSGD can yield smaller oracle complexity than the full-dimensional method (NSGD), providing theoretical support for using randomized subspace projection.

\end{itemize}
In addition, \Cref{app:rngd} provides convergence analysis, both in expectation and with high probability, for randomized subspace normalized gradient descent (RS-NGD) in the deterministic setting, where $P_k^\top \nabla F(x_k)$ are used instead of  $P_k^\top g_k$ as projected gradients.

\paragraph{Notation: }
We use the following notation throughout the paper.
Unless stated otherwise, $\|\cdot\|$ denotes the Euclidean norm for vectors
  and the corresponding operator norm for matrices induced by the Euclidean norm. Let $I_d$ denote the 
$d \times d$ identity matrix, and let
   $O(d)$ denote the orthogonal group consisting of all
  $d \times d$ real matrices $U$ satisfying $U^\top U = I_d$.

 \section{Preliminaries}
\subsection{Oracle complexity }
\label{sec:oracle-complexity}

We measure oracle complexity in terms of calls to a coordinate-wise gradient oracle.
Under the oracle model considered here (compatible with forward-mode AD), a \emph{full-dimensional gradient evaluation}
costs $d$ coordinate-wise oracle calls.
In contrast, a \emph{projected gradient evaluation} computes a projection of the gradient onto a
lower-dimensional subspace, e.g., $P^\top \nabla f(x;\xi)$ for a matrix $P\in\mathbb{R}^{d\times r}$ with $r\leq d$,
and thus costs $r$ coordinate-wise oracle calls.
With a minibatch of size $\bar{B}$, these costs scale to $\bar{B}d$ and $\bar{B}r$ oracle calls per iteration, respectively.
This metric serves as a proxy for computational cost in our forward-mode oracle model. 
Computing $P^\top \nabla f(x;\xi)$ requires $r$ directional-derivative evaluations along the columns of $P$; 
thus, the cost scales with $r$ rather than $d$. 
It can also serve as a proxy for communication cost in distributed settings, since communicating an $r$-dimensional sketch 
reduces the number of transmitted scalars from $d$ to $r$. 
Our oracle model targets forward-mode and/or communication-bottlenecked regimes in which the overhead of sampling $P_k$  is negligible compared to directional-derivative evaluations and/or communication\footnote{
In stochastic optimization, \emph{sample complexity} is often used to mean the number of samples accessed by the algorithm
(e.g., $\bar{B}T$ for minibatch size $\bar{B}$ and $T$ iterations). Thus, the oracle complexity equals the sample complexity times the number of coordinate-wise gradient oracle queries per sample access.
}.

\subsection{Properties of Haar Random Matrices}
\label{sec:haar-prop}

Following \citet{kozak2021}, we construct a random matrix $P\in\mathbb{R}^{d\times r}$ using a Haar-random orthogonal matrix, because it satisfies the following four desirable properties: \\
{\bf (i)} For any $U\in O(d)$, $UP$  has the same distribution as $P$.\\
{\bf (ii)} $\mathbb{E}[PP^\top]=I_d$.\\
{\bf (iii)} $P^\top P=\frac{d}{r}I_r$, and hence $\|P\|=\sqrt{\frac{d}{r}}$.\\
{\bf (iv)} For any fixed  $x\in\mathbb{R}^d\setminus\{0\}$, $\|P^\top x\|>0$ almost surely.

\subsection{Problem Setting and Assumptions}

We impose the following standard assumptions on $F$.
\begin{assumption}[Lower Boundedness]\label{ass:lower}
$F$ is bounded below, i.e., $F_\ast := \inf_{x \in \mathbb{R}^d} F(x)> -\infty$; hence, we can define $\Delta_0 \coloneqq F(x_0) - F_\ast$.
\end{assumption}

\begin{assumption}[L-smoothness]\label{ass:Lsmooth}
$F$ is $L$-smooth, i.e., $\exists L >0,\forall x,y\in\mathbb{R}^d$,
\[
\|\nabla F(x) - \nabla F(y)\| \le L \|x - y\|.
\]

Furthermore, we define a non-zero symmetric positive semidefinite matrix 
$\mathbb{L} \in \mathbb{R}^{d \times d}$ such that for all $x,y \in \mathbb{R}^d$,
\begin{align}\label{onesided-Lplus}
    F(y) - F(x) - \langle \nabla F(x),\, y - x \rangle 
    \le \frac{1}{2}\,(y - x)^\top \mathbb{L} (y - x).
\end{align}
\end{assumption}
It is widely known and commonly used in the analysis of non-convex
optimization methods that one can take $\mathbb{L} = L I_d$.\footnote{
However, such choices can be
loose. For example, consider
$F : \mathbb{R}^d \to \mathbb{R}$ defined by
$F(x_1,\dots,x_d) = \frac{x_1^2}{2}.$
This function is $1$-smooth, yet we can choose
$\mathbb{L} = \operatorname{diag}(1,0,\dots,0),$
which yields a much tighter bound.}
Therefore, one may always choose
$\|\mathbb{L}\|\le L$.

This refined smoothness notion is particularly useful for \emph{in-expectation} convergence in our Haar-based analysis; it leads to tighter bounds.

We impose the following unbiasedness assumption on the stochastic gradient.
\begin{assumption}[Unbiasedness]\label{ass:unbiased}
For all $x \in \mathbb{R}^d$,
\[
\mathbb{E}[\nabla f(x,\xi)] = \nabla F(x).
\]
\end{assumption}
We now introduce a few assumptions regarding the noise. 
\begin{assumption}[sub-Gaussian]\label{ass:subgauss}
The gradient satisfies a $\sigma$-sub-Gaussian tail condition, i.e., 
for all $x\in\mathbb{R}^d$ and
all $\lambda\in\mathbb{R}$ with $|\lambda|\le\frac{1}{\sigma}$,
\begin{align}\label{eq:subGaussGrad}
    \mathbb{E}\!\left[
        \exp\!\left(
            \lambda^2\,\|\,\nabla f(x, \xi) -\nabla F(x)\,\|^2
        \right)
    \right]
    \le
    \exp\!\left(
        \lambda^2\sigma^2
    \right).
\end{align}
\end{assumption}
\begin{assumption}[BV]\label{ass:BV}
The gradient has bounded variance, i.e., for all $x \in\mathbb{R}^d$,
\[
\mathbb{E}\bigl[\|\nabla f(x, \xi) - \nabla F(x)\|^2\bigr] \le \sigma^2.
\]
\end{assumption}
While \Cref{ass:BV} is standard, it can be too restrictive in regimes where the stochastic gradient noise is heavy-tailed.
To accommodate such behavior, we allow the noise to have only a bounded $p$-th moment for some $p\in(1,2]$.

\begin{assumption}[$p$-BCM]\label{ass:pBCM}
For some $p\in(1,2]$, the gradient has bounded $p$-th moment, i.e., for all $x\in\mathbb{R}^d$,
\[
\mathbb{E}[\|\nabla f(x, \xi) - \nabla F(x)\|^p] \le \sigma^p.
\]
\end{assumption}

These noise assumptions are related as follows:\footnote{
\textbf{Reason.}
\Cref{ass:subgauss} implies \Cref{ass:BV} by Jensen's inequality.
\Cref{ass:BV} is \Cref{ass:pBCM} with $p=2$, and \Cref{ass:pBCM} for some $p$ implies \Cref{ass:pBCM} for any $q\le p$ (again by Jensen).
Hence, \Cref{ass:BV} implies \Cref{ass:pBCM} for every $p\in(1,2]$.
}
\[
\text{\Cref{ass:subgauss}}
\;\subset\;
\text{\Cref{ass:BV}}
\;\subset\;
\text{\Cref{ass:pBCM}}.
\]

\section{Randomized Subspace Stochastic Optimization}
\label{sec:algorithm}
For a mini-batch stochastic gradient:
\begin{align*} 
\textstyle
g_k \coloneqq \frac{1}{\bar{B}} \sum_{j=1}^{\bar{B}} \nabla f(x_k, \xi_k^j) \in \mathbb{R}^d,
\end{align*}
where $\{\xi_k^j\}_{j=1}^{\bar{B}}$ are i.i.d.\ samples 
from $\mathcal{D}$ at iteration $k$, we construct
 a randomized subspace stochastic gradient as
\[
\textstyle
P_k^\top g_k, ~~~ { i.e., } ~~~
\frac{1}{\bar{B}} \sum_{j=1}^{\bar{B}} P_k^\top \nabla f(x_k, \xi_k^j)  \in \mathbb{R}^r
\]
using a random projection matrix $P_k \in \mathbb{R}^{d\times r}$ constructed from a Haar-random orthogonal matrix.
Here, we propose a general framework for randomized subspace SGD methods using $P_k^\top g_k$ as in \Cref{alg:rs-nsgd}, including the randomized subspace stochastic gradient descent method (RS-SGD) analyzed by \citet{flynn2024}.

\begin{algorithm}[t]
\caption{Randomized Subspace Stochastic Optimization with Scaling Function $\phi$}
\label{alg:rs-nsgd}
\begin{algorithmic}[1]
\REQUIRE Initial point $x_0\in\mathbb{R}^d$, stepsize $\bar{\eta}$, minibatch size $\bar{B}$, subspace dimension $r$, scaling function $\phi$ 
\FOR{$k=0,1,\dots,T-1$}
    \STATE Construct a Haar-random $P_k\in\mathbb{R}^{d\times r}$ and draw i.i.d.\ samples $\{\xi_k^j\}_{j=1}^{\bar{B}} \sim \mathcal{D}$.
 
    \STATE $u_k \gets \frac{1}{\bar{B}} \sum_{j=1}^{\bar{B}} P_k^\top \nabla f(x_k,\xi_k^j)$  \alglinelabel{ln:reduced_grad}
    \STATE $x_{k+1} \gets x_k - \bar{\eta}\phi(u_k) P_k u_k$  \alglinelabel{ln:update}
\ENDFOR
\end{algorithmic}
\end{algorithm}

The computation of the projected gradient $P_k^\top g_k$ in Line~\ref{ln:reduced_grad} can be carried out in different ways, depending on the intended use of the algorithm. When randomized subspace updates are employed as a gradient compression technique in distributed optimization, one may first compute $g_k \in \mathbb{R}^d$ using reverse-mode AD and then project it onto the random subspace via multiplication by $P_k^\top$. In contrast, when the goal is to reduce memory consumption (such as in large-scale models where storing intermediate activations for reverse-mode AD is prohibitive), it is natural to compute $P_k^\top \nabla f(x,\xi_k^j) \in \mathbb{R}^r$ directly using forward-mode AD by evaluating directional derivatives along the columns of $P_k$. In this case, the computational cost scales proportionally to the subspace dimension and amounts to an $r/d$ fraction of the cost of a full-dimensional gradient computation.

\paragraph{RS-SGD ($\phi(u_k)=1$)}
The update rule in Line~\ref{ln:update} becomes
\begin{align}
x_{k+1} = x_k - \bar{\eta} P_k P_k^\top g_k.
\label{eq:update_RS-SGD}
\end{align}
Note that if we choose $r = d$, then $P_k$ becomes an orthogonal matrix
and RS-SGD reduces to standard SGD. Therefore, RS-SGD can be regarded as a generalization of SGD.

While prior work \cite{flynn2024} establishes convergence guarantees for RS-SGD in expectation under a bounded-variance assumption, we show in the next section that RS-SGD also admits high-probability guarantees under sub-Gaussian noise.
However, these noise conditions can be overly restrictive in modern machine-learning workloads, where stochastic gradients are often heavy-tailed.
This limitation reflects the well-known sensitivity of SGD-type updates to heavy-tailed noise. To overcome this issue, we propose a randomized subspace \emph{normalized} stochastic gradient methods (RS-NSGD). 

\paragraph{RS-NSGD($\phi(u_k)=\frac{1}{\|u_k\|}$)}\footnote{We define $\phi(0)=0$}
The update rule in Line~\ref{ln:update} is
\begin{align*}
x_{k+1} = x_k - \bar{\eta} \frac{P_k P_k^\top g_k}{\|P_k^\top g_k\|}.
\end{align*}
As in the discussion of RS-SGD, when 
$r = d$, RS-NSGD reduces to the standard NSGD; hence, RS-NSGD can be viewed as a generalization of NSGD.

\section{High-Probability Convergence of RS-SGD}

As mentioned earlier, \citet{flynn2024} established an in-expectation convergence guarantee for RS-SGD.
Here we prove a nonasymptotic high-probability guarantee in the setting of Algorithm~1.
Our analysis extends the framework of \citet{liu2023hp} to mini-batch gradients.
Proofs in this section are deferred to \Cref{app:rsgd-hp}.

\begin{lemma}\label{lem:subgauss_minibatch}
Assume that the stochastic gradient  $\nabla f(x,\xi)$ satisfies
\Cref{ass:unbiased,ass:subgauss}.
For a fixed $x \in \mathbb{R}^d$, define the mini-batch gradient estimator
\[
g(x) \coloneqq \frac{1}{\bar{B}} \sum_{j=1}^{\bar{B}} \nabla f(x,\xi^j),
\]
and set
\[
\bar{\sigma} \coloneqq 
24\sqrt{\frac{e(e+1)}{2}}\frac{\sigma}{\sqrt{\bar{B}}}.
\]
Then for all $\lambda$ satisfying $|\lambda| \le 1/\bar{\sigma}$, it holds that
\[
\mathbb{E}\!\left[
\exp\!\left(\lambda^2 \bigl\|g(x) - \nabla F(x)\bigr\|^2\right)
\right]
\le
5^d \exp\!\left(\lambda^2 \bar{\sigma}^2\right).
\]
\end{lemma}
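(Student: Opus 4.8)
The plan is to reduce the $d$-dimensional norm-sub-Gaussian statement to a one-dimensional sub-Gaussian estimate via a covering argument over the unit sphere, exploit independence across the minibatch to obtain the $1/\bar B$ variance reduction, and then reassemble. Write $X_j \coloneqq \nabla f(x,\xi^j) - \nabla F(x)$, so that by \Cref{ass:unbiased} the $X_j$ are i.i.d.\ zero-mean vectors, each satisfying the norm-sub-Gaussian bound of \Cref{ass:subgauss} with parameter $\sigma$, and $g(x) - \nabla F(x) = \frac{1}{\bar B}\sum_{j=1}^{\bar B} X_j =: S$. The dimension dependence $5^d$ in the target bound signals a net over the sphere $S^{d-1}$, while the improvement $\sigma \mapsto \sigma/\sqrt{\bar B}$ must come from summing independent one-dimensional sub-Gaussians.

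First I would set up the scalar machinery. For any fixed unit vector $u$, the contraction $|\langle u, X_j\rangle| \le \|X_j\|$ immediately transfers the squared-exponential bound, giving $\mathbb{E}[\exp(\lambda^2\langle u, X_j\rangle^2)] \le \exp(\lambda^2\sigma^2)$ for $|\lambda|\le 1/\sigma$; since $\mathbb{E}[\langle u, X_j\rangle] = 0$, this is a genuine one-dimensional sub-Gaussian condition. I would then convert this squared-MGF characterization into a standard linear MGF bound $\mathbb{E}[\exp(\theta\langle u, X_j\rangle)]\le\exp(c\,\theta^2\sigma^2)$ valid for all $\theta$. The cleanest route is via moments: evaluating the hypothesis at $\lambda = 1/\sigma$ yields $\mathbb{E}[\exp(\langle u, X_j\rangle^2/\sigma^2)]\le e$, whence $\mathbb{E}[\langle u, X_j\rangle^{2m}]\le m!\,\sigma^{2m}e$, and expanding $\mathbb{E}[\exp(\theta\langle u, X_j\rangle)]$ as a power series (controlling odd moments by even ones) produces the explicit constant $c$ in which the factors $e$ and $e+1$ originate. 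By independence and the zero mean the linear MGFs tensorize, so $\langle u, S\rangle = \frac{1}{\bar B}\sum_j\langle u, X_j\rangle$ is sub-Gaussian with proxy $c\,\sigma^2/\bar B$; converting back to squared-MGF form gives $\mathbb{E}[\exp(\mu^2\langle u, S\rangle^2)]\le\exp(\mu^2\tilde\sigma^2)$ with $\tilde\sigma = \Theta(\sigma/\sqrt{\bar B})$, valid for $|\mu|\le 1/\tilde\sigma$.

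Finally I would run the net argument. Fix a $\tfrac12$-net $\mathcal N$ of $S^{d-1}$ with $|\mathcal N|\le 5^d$; the standard estimate gives $\|S\| \le 2\max_{u\in\mathcal N}\langle u, S\rangle$, and since the right-hand maximum is nonnegative, $\exp(\lambda^2\|S\|^2) \le \max_{u\in\mathcal N}\exp(4\lambda^2\langle u, S\rangle^2) \le \sum_{u\in\mathcal N}\exp(4\lambda^2\langle u, S\rangle^2)$. Taking expectations and applying the scalar squared-MGF bound at $\mu = 2\lambda$ yields $\mathbb{E}[\exp(\lambda^2\|S\|^2)] \le 5^d\exp(4\lambda^2\tilde\sigma^2)$. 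Absorbing the factor $4$ from the net together with the constants from the two conversions---whose combination produces the variance-proxy constant $e(e+1)/2$---into the definition $\bar\sigma = 24\sqrt{e(e+1)/2}\,\sigma/\sqrt{\bar B}$ gives the claimed inequality, with the admissible range $|\lambda|\le 1/\bar\sigma$ chosen precisely so that $2\lambda$ stays within the validity window $|\mu|\le 1/\tilde\sigma$ of the scalar bound.

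I expect the main obstacle to be bookkeeping rather than conceptual: the two conversions (squared-MGF $\to$ linear MGF $\to$ squared-MGF) each introduce multiplicative constants and, crucially, shrink the admissible range of the MGF parameter, so the factor-$4$ rescaling forced by the $\tfrac12$-net must be threaded through these ranges consistently to land on exactly $|\lambda|\le 1/\bar\sigma$. The dimension enters only through the net cardinality $5^d$, so no dimension-dependent constants leak into $\bar\sigma$; keeping that separation clean is what makes the explicit constant $24\sqrt{e(e+1)/2}$ attainable.
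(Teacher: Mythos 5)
Your proposal is correct and matches the paper's proof in essentially every structural respect: the paper likewise converts the norm-sub-Gaussian hypothesis into a linear MGF bound (\Cref{lem:norm-to-mgf}, the source of the factor $(e+1)/2$), tensorizes over the minibatch by independence, and converts back to a squared-MGF bound on the norm via a $1/2$-net of cardinality $5^d$ with the factor-$2$ rescaling of $\lambda$ (\Cref{lem:mgf-to-norm}, the source of the factor $24\sqrt{e}$), exactly as you outline. The only differences are cosmetic: you run the first conversion scalar-wise through projections and a moment/power-series expansion, whereas \Cref{lem:norm-to-mgf} works directly with the vector $X$ via the inequality $e^x\le 1+x+\tfrac{x^2}{2}e^{|x|}$ and Young's inequality, while the paper's back-conversion in \Cref{lem:mgf-to-norm} internally uses precisely the Chernoff-tail/moments/series route you describe.
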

By \Cref{lem:subgauss_minibatch}, increasing the mini-batch size reduces the noise, and 
the following theorem establishes convergence.

\begin{theorem}\label{thm:main_highprob}
Suppose that \Cref{ass:lower,ass:Lsmooth,ass:unbiased,ass:subgauss} hold. 
Let $\{x_k\}_{k\ge 0}$ be the sequence generated by  RS-SGD.
Let $\bar{B}\coloneqq \lceil \max\{1, BT^q\} \rceil$ for $B>0$ and $q>0$,
$\alpha \coloneqq 24\sqrt{\frac{e(e+1)}{2}}$,
and $\bar{\eta} = \frac{r}{Ld}$.
Further define
\begin{equation}
\mu \coloneqq
\begin{cases}
1 - I_{\frac{r}{2d}}\!\left(\frac{r}{2}, \frac{d-r}{2}\right), & r<d,\\
1, & r=d.
\end{cases}
\label{eq:def_mu}
\end{equation}
where $I_{x}(a,b)$ denotes the regularized incomplete Beta function.
Then for any $\delta \in (0,1)$ and any $T$ such that
$2\log\!\left(\frac{2}{\delta}\right) < \mu T$, the following holds 
with probability at least $1-\delta$: 
\begin{align*}
\min_{0\leq k \leq T-1}\|\nabla F(x_k)\|^2 
\leq 
\frac{
4L\frac{d}{r}\Delta_0
}{
\mu T - \sqrt{2\mu T \log\!\left(\frac{2}{\delta}\right)}
}
\\
+
\frac{
2\frac{d}{r}\left(\frac{\alpha^2\sigma^2}{\bar{B}}\right)
\bigl(d\log 5 + 1\bigr)
}{
\mu - \sqrt{2\frac{\mu}{T}\log\!\left(\frac{2}{\delta}\right)}
}
+
\frac{
2\frac{d}{r}\left(\frac{\alpha^2\sigma^2}{\bar{B}}\right)
\log\!\left(\frac{2}{\delta}\right)
}{
\mu T - \sqrt{2\mu T \log\!\left(\frac{2}{\delta}\right)}
}.
\end{align*}
\end{theorem}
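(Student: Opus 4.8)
The plan is to start from the descent lemma implied by $L$-smoothness (the scalar version $\mathbb{L}=LI_d$ suffices for the high-probability bound) applied to the RS-SGD update $x_{k+1}=x_k-\bar\eta P_kP_k^\top g_k$. Writing the stochastic gradient as $g_k=\nabla F(x_k)+\zeta_k$ with noise $\zeta_k\coloneqq g_k-\nabla F(x_k)$, and using Haar property (iii) $P_k^\top P_k=\frac{d}{r}I_r$, the quadratic term becomes $\|P_kP_k^\top g_k\|^2=\frac{d}{r}\|P_k^\top g_k\|^2$. The decisive observation is that with the stepsize $\bar\eta=\frac{r}{Ld}$ the coefficient of this quadratic term equals exactly $\bar\eta/2$, so after expanding $\langle P_k^\top\nabla F(x_k),P_k^\top g_k\rangle$ and $\|P_k^\top g_k\|^2$ in terms of $\zeta_k$, the two stochastic cross terms $\pm\bar\eta\langle P_k^\top\nabla F(x_k),P_k^\top\zeta_k\rangle$ cancel pathwise. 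This yields the deterministic per-step inequality
\[
\|P_k^\top\nabla F(x_k)\|^2\le\tfrac{2Ld}{r}\bigl(F(x_k)-F(x_{k+1})\bigr)+\|P_k^\top\zeta_k\|^2,
\]
which I would telescope over $k=0,\dots,T-1$, using $F(x_T)\ge F_\ast$, to obtain $\sum_k\|P_k^\top\nabla F(x_k)\|^2\le\frac{2Ld}{r}\Delta_0+\sum_k\|P_k^\top\zeta_k\|^2$. No expectation is needed here: the cancellation is algebraic.

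The next step lower-bounds the signal sum by $\min_k\|\nabla F(x_k)\|^2$. I would introduce the ``good projection'' event $A_k\coloneqq\{\|P_k^\top\nabla F(x_k)\|^2\ge\tfrac12\|\nabla F(x_k)\|^2\}$. Since $P_k$ is drawn fresh and independently of $x_k$, conditioning on the natural filtration $\mathcal{F}_{k-1}$ (under which $x_k$ is measurable) and using Haar rotational invariance (property (i)) reduces $\mathbb{P}(A_k\mid\mathcal{F}_{k-1})$ to the law of the squared norm of a fixed unit vector projected onto a random $r$-dimensional subspace. Writing $P_k=\sqrt{d/r}\,Q_{:,1:r}$ for a Haar orthogonal $Q$, the ratio $\|Q_{:,1:r}^\top v\|^2/\|v\|^2$ follows a $\mathrm{Beta}(r/2,(d-r)/2)$ distribution, so $\mathbb{P}(A_k\mid\mathcal{F}_{k-1})=\mathbb{P}(\mathrm{Beta}(r/2,(d-r)/2)\ge r/(2d))=1-I_{r/(2d)}(r/2,(d-r)/2)=\mu$ (and $\mu=1$ when $r=d$). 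This gives $\sum_k\|P_k^\top\nabla F(x_k)\|^2\ge\tfrac12\bigl(\min_k\|\nabla F(x_k)\|^2\bigr)\sum_k\mathbf{1}_{A_k}$.

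Combining the two displays and rearranging yields $\min_k\|\nabla F(x_k)\|^2\le\bigl(\tfrac{4Ld}{r}\Delta_0+2\sum_k\|P_k^\top\zeta_k\|^2\bigr)/\sum_k\mathbf{1}_{A_k}$, and it remains to control the two random sums with high probability. For the denominator, the $\{\mathbf{1}_{A_k}\}$ are conditionally $\mathrm{Bernoulli}(\mu)$, so a one-sided Chernoff-type martingale lower-tail bound (built from the conditional MGF supermartingale, with total conditional variance $\le\mu T$) gives $\sum_k\mathbf{1}_{A_k}\ge\mu T-\sqrt{2\mu T\log(2/\delta)}$ with probability $\ge1-\delta/2$; the hypothesis $2\log(2/\delta)<\mu T$ makes this denominator positive. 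For the noise, I would use $\|P_k^\top\zeta_k\|^2\le\|P_k\|^2\|\zeta_k\|^2=\frac{d}{r}\|\zeta_k\|^2$ (property (iii), pathwise) and apply \Cref{lem:subgauss_minibatch} conditionally to get $\mathbb{E}[\exp(\lambda^2\|\zeta_k\|^2)\mid\mathcal{F}_{k-1}]\le5^d\exp(\lambda^2\bar\sigma^2)$; a supermartingale/Chernoff argument with $\lambda^2=1/\bar\sigma^2$ then produces $\sum_k\|\zeta_k\|^2\le\bar\sigma^2\bigl(T(d\log5+1)+\log(2/\delta)\bigr)$ with probability $\ge1-\delta/2$, where $\bar\sigma^2=\alpha^2\sigma^2/\bar B$. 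A union bound over these two events, followed by substitution, reproduces exactly the three terms in the claimed bound.

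The main obstacle I anticipate is the high-probability bookkeeping across the two distinct randomness sources (projection and minibatch noise) within one filtration. Specifically, (i) identifying $\mathbb{P}(A_k\mid\mathcal{F}_{k-1})$ with the regularized incomplete Beta function and verifying that the adaptivity of $x_k$ does not affect it, and (ii) extracting the precise deviation $\sqrt{2\mu T\log(2/\delta)}$ from the variance-aware Bernoulli lower tail rather than the cruder Hoeffding rate $\sqrt{2T\log(2/\delta)}$ — the $\mu$ inside the square root is what keeps the denominator sharp. The remaining delicate point is propagating the minibatch sub-Gaussian constant $5^d$ of \Cref{lem:subgauss_minibatch} through the MGF, which is the origin of the $d\log5$ factor in the second term; everything else is routine telescoping and algebra.
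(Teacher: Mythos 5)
Your proposal is correct and follows essentially the same route as the paper's proof: the same stepsize-induced pathwise cancellation of the cross term under $\bar{\eta}=\frac{r}{Ld}$, the same good-projection events $A_k$ with the variance-aware Chernoff lower tail $\mu T-\sqrt{2\mu T\log(2/\delta)}$, the same conditional minibatch sub-Gaussian MGF (\Cref{lem:subgauss_minibatch}) driving a supermartingale/Markov bound on the noise sum, and a final union bound. The only difference is organizational — you telescope first and then bound $\sum_k\|\zeta_k\|^2$ separately, whereas the paper folds the function decrease into a single backward-induction supermartingale — which yields the identical three terms.
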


For $\mu$ defined in \eqref{eq:def_mu}, for all $1 \le r \le d$,
\footnote{
If $r=d$, then $\mu=1$; otherwise let $Z\sim\mathrm{Beta}(\frac r2,\frac{d-r}{2})$.
By definition of $I_x(a,b)$ and $\mathbb{E}[Z]=r/d$,
$\mu=\mathbb{P}(Z\ge r/(2d))=\mathbb{P}(Z\ge \frac12\,\mathbb{E}[Z])$.
Paley--Zygmund \cite{paley1932} gives
$\mu \ge \frac14\,\frac{\mathbb{E}[Z]^2}{\mathbb{E}[Z^2]}$,
and using $\mathbb{E}[Z^2]=\frac{r(r+2)}{d(d+2)}$ yields $\mu\ge \frac{1}{12}$.
}
$\frac{1}{12} \leq \mu \le 1.$

From the result of \Cref{thm:main_highprob}, we can derive the oracle complexity required to achieve an $\epsilon$-stationary point:
\begin{equation}
\min_{0 \le k \le T-1} \|\nabla F(x_k)\| \le \epsilon 
\label{eqn:eps-stationary}
\end{equation}
Take $q=1$ and $B=\frac{d\sigma^2}{L\Delta_0}$ for $\bar{B}$.
Then the oracle complexity 
is \footnote{Throughout, $\mathcal{O}$ notation hides constants; statements are meant to hold for all sufficiently small $\epsilon$, with the threshold possibly depending on parameters. We use $\tilde{\mathcal{O}}(\cdot)$ to additionally hide polylogarithmic factors in $1/\delta$.}
\[
\tilde{\mathcal{O}}\!\left(
\frac{d^{3}}{\mu^{2}r}\,\Delta_0L\sigma^{2}\,\epsilon^{-4}
\right).
\]
Expectation bounds capture average behavior but not the likelihood of large deviations in one run.
Our high-probability analysis quantifies this risk.
To the best of our knowledge, \Cref{thm:main_highprob} provides the first high-probability convergence guarantee for RS-SGD under the sub-Gaussian noise assumption.
Although the constants become larger compared to in-expectation bounds, our result matches the $\epsilon$-dependence ($\epsilon^{-4}$) implied by the in-expectation analysis of \citet{flynn2024} for achieving \eqref{eqn:eps-stationary}; it also matches the standard $\epsilon^{-4}$ high-probability rate of SGD under sub-Gaussian noise \cite{liu2023hp}.

\section{Convergence Results for RS-NSGD}
Proofs of the results in this section are deferred to \Cref{app:rnsgd}.
First, we obtain the following important proposition.

\begin{proposition}
\label{prop:tau}
Let \(x \neq 0\), and let \(P\) be a Haar matrix.
Then \(\|P^\top x\| \neq 0\) almost surely, and there exists a constant
\(\tau > 0\) such that
\[
\mathbb{E}\!\left[\frac{PP^\top x}{\|P^\top x\|}\right]
=
\tau\,\frac{x}{\|x\|}.
\]
Specifically, \(\tau\) is given by
\[
\tau
=
\sqrt{\frac{d}{r}}\,
\frac{
\Gamma\!\left(\frac{r+1}{2} \right)
\Gamma\!\left(\frac{d}{2}\right)
}{
\Gamma\!\left(\frac{r}{2}\right)
\Gamma\!\left(\frac{d+1}{2} \right)
},
\]
where $\Gamma(\cdot)$ denotes the Gamma function.
Furthermore,  $\frac{1}{\sqrt{2}} \leq \tau \leq 1.$
\end{proposition}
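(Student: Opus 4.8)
The plan is to split the statement into three parts: the almost-sure nonvanishing of $\|P^\top x\|$, the identification of the direction together with the closed form of $\tau$, and finally the two-sided bound $\tfrac{1}{\sqrt2}\le\tau\le 1$. The nonvanishing is immediate from property (iv) in \Cref{sec:haar-prop}, so the substance lies in the last two parts.

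For the direction, I would exploit the left-rotational invariance of the Haar construction (property (i)). Writing $v(x) := \mathbb{E}[PP^\top x/\|P^\top x\|]$ and using $UP \overset{d}{=} P$ for every $U\in O(d)$, the change of variables $P\mapsto UP$ inside the expectation yields the equivariance $v(Ux)=Uv(x)$. Taking $U$ in the stabilizer of $x$ (so $Ux=x$) forces $v(x)$ to be fixed by every rotation fixing $x$, hence $v(x)\in\mathrm{span}(x)$; since the integrand is $0$-homogeneous in $x$, the same equivariance shows the resulting scalar $\tau$ is independent of $x$. It therefore suffices to take $x=e_1$, whence $\tau=\langle e_1,v(e_1)\rangle=\mathbb{E}[\|P^\top e_1\|^2/\|P^\top e_1\|]=\mathbb{E}[\|P^\top e_1\|]$. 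To evaluate this, I would use the explicit construction $P=\sqrt{d/r}\,Q$, where $Q$ collects the first $r$ columns of a Haar orthogonal $U\in O(d)$ (which also produces properties (ii)--(iii)). Then $\|P^\top e_1\|^2=\tfrac dr\sum_{j=1}^r U_{1j}^2=\tfrac dr W$, where $W:=\sum_{j=1}^r U_{1j}^2$ is the squared norm of the first $r$ coordinates of the first row of $U$. Since that row is uniform on $S^{d-1}$, a standard Gaussian-ratio argument gives $W\sim\mathrm{Beta}(r/2,(d-r)/2)$, and the Beta moment formula $\mathbb{E}[W^{1/2}]=\Gamma(a+\tfrac12)\Gamma(a+b)/(\Gamma(a)\Gamma(a+b+\tfrac12))$ with $a=r/2$, $b=(d-r)/2$ produces exactly the claimed Gamma ratio after multiplying by $\sqrt{d/r}$.

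The bounds come last. The upper bound $\tau\le 1$ is a one-line Jensen estimate: $\tau=\mathbb{E}[\|P^\top e_1\|]\le(\mathbb{E}\|P^\top e_1\|^2)^{1/2}=(e_1^\top\mathbb{E}[PP^\top]e_1)^{1/2}=1$ by property (ii). The lower bound is the main obstacle. Writing $\tau=\mathbb{E}[\sqrt W]/\sqrt{\mathbb{E}[W]}$ (using $\mathbb{E}[W]=r/d$), the crude estimate $\sqrt W\ge W$ only yields $\tau\ge\sqrt{r/d}$, which is too weak for small $r$. Instead I would control the Gamma ratio directly via a Wendel/Gautschi-type inequality $(x/(x+s))^{1-s}\le\Gamma(x+s)/(x^s\Gamma(x))\le 1$ at $s=\tfrac12$, applied to both $h(r):=\Gamma(\tfrac{r+1}{2})/\Gamma(\tfrac r2)$ and $h(d)$. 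Combining the lower estimate for $h(r)$ with the upper estimate $h(d)\le\sqrt{d/2}$ collapses $\tau=\sqrt{d/r}\,h(r)/h(d)$ to $\tau\ge\sqrt{r/(r+1)}$, which is at least $\tfrac1{\sqrt2}$ for all $r\ge 1$, with equality at $r=1$ matching tightness. Pinning down the exact constants in this Gamma-ratio manipulation is the step demanding the most care.
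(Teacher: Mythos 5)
Your proposal is correct, and the core computation coincides with the paper's: both proofs reduce $\tau$ to $\sqrt{d/r}\,\mathbb{E}[\sqrt{Y}]$ with $Y\sim\mathrm{Beta}\!\left(\tfrac r2,\tfrac{d-r}{2}\right)$, evaluate it by the Beta moment formula, get the upper bound $\tau\le 1$ from Jensen together with $\mathbb{E}[PP^\top]=I_d$, and get the lower bound from Wendel's inequality at $s=\tfrac12$ applied to both Gamma ratios (your intermediate bound $\tau\ge\sqrt{r/(r+1)}$ is just a slightly sharper reorganization of the paper's estimate $\bigl(\tfrac{x}{x+1/2}\bigr)^{1/2}\ge\tfrac1{\sqrt2}$ for $x\ge\tfrac12$). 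Where you genuinely diverge is the identification of the direction: the paper uses a differentiation trick, writing $g(x)=\|P^\top x\|$, noting $\nabla g(x)=PP^\top x/\|P^\top x\|$ a.s.\ with the uniform bound $\|\nabla g(x)\|\le\sqrt{d/r}$ to justify exchanging gradient and expectation, and then reading off the identity from $\nabla_x\,\mathbb{E}[\|P^\top x\|]=\nabla_x(\tau\|x\|)$; you instead use the left-invariance $UP\overset{d}{=}P$ to get the equivariance $v(Ux)=Uv(x)$, the stabilizer of $x$ (e.g.\ the reflection fixing $\mathrm{span}(x)$ and negating $x^\perp$) to force $v(x)\in\mathrm{span}(x)$, and then compute the scalar directly as $\tau=\langle e_1,v(e_1)\rangle=\mathbb{E}[\|P^\top e_1\|]$. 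The paper's route gets direction and magnitude in one stroke but must justify the interchange of differentiation and expectation; your route is purely algebraic--probabilistic, avoids any differentiability considerations, and exhibits $\tau$ immediately as a first moment, at the mild cost of the separate stabilizer and homogeneity arguments needed to pin down the direction and the constancy of $\tau$. One small remark: your Beta-distribution step (like the paper's) implicitly assumes $r<d$; the case $r=d$ is degenerate ($W\equiv 1$, $PP^\top=I_d$, $\tau=1$) and worth a one-line disclaimer.
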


When $r = d$, the coefficient $\tau$ becomes $1$. Moreover, as illustrated in \Cref{fig:tau}, even for small $r$, the value of $\tau$ quickly approaches $1$.

\begin{figure}[t]
    \centering
    \includegraphics[width=0.4\columnwidth]{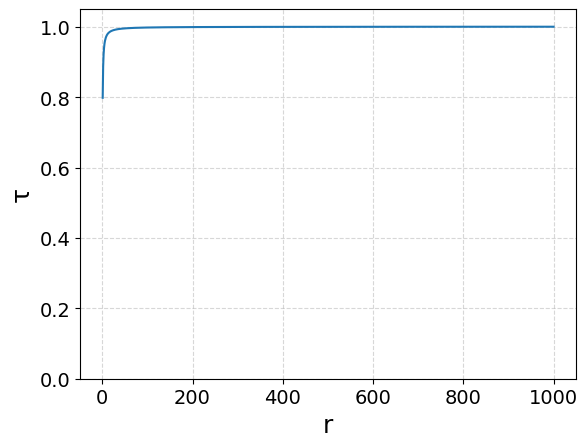}
    \caption{The value of $\tau$ as a function of $r$ for $d = 1000$.}
    \label{fig:tau}
\end{figure}

\subsection{In-expectation Bound}
To obtain tight bounds on the quadratic terms in the in-expectation convergence analysis, we derive the following elementary proposition from properties of the Haar matrix and basic linear algebra.
\begin{proposition}\label{prop:secondterm}
Let $S$ be a $d\times d$ square matrix, and $P$ be a Haar matrix. Then,
\[
\mathbb{E}\left[\frac{v^\top PP^\top S PP^\top v}{\|P^\top v\|^2}\right]
=
\frac{d(r-1)}{r(d-1)}\frac{v^\top Sv}{\|v\|^2}
+\frac{d-r}{r(d-1)}\operatorname{tr}(S).
\]
\end{proposition}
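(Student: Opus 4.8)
The plan is to exploit two features of the integrand: its linearity in $S$ and the rotational invariance of the Haar construction (property (i)). Since the expression is invariant under scaling $v$, I first normalize so that $\|v\|=1$. Next I rewrite the integrand in a form that isolates its dependence on $S$. Setting $u \coloneqq PP^\top v / \|PP^\top v\|$ and using $P^\top P = \frac{d}{r}I_r$ (property (iii)), which gives $\|PP^\top v\|^2 = \frac{d}{r}\|P^\top v\|^2$, one checks directly that
\[
\frac{v^\top PP^\top S PP^\top v}{\|P^\top v\|^2} = \frac{d}{r}\,u^\top S u = \frac{d}{r}\operatorname{tr}\!\big(S\,uu^\top\big),
\]
so the whole problem reduces to computing the single matrix $M \coloneqq \mathbb{E}[uu^\top]$; the claimed identity will then be $\frac{d}{r}\operatorname{tr}(SM)$.

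The second step is to determine $M$ by symmetry alone. For any $U\in O(d)$ fixing $v$ (so that $U^\top v = v$), replacing $P$ by $UP$ sends $u$ to $Uu$ while leaving its law unchanged (by property (i)); hence $\mathbb{E}[uu^\top] = U\,\mathbb{E}[uu^\top]\,U^\top$, i.e.\ $M$ commutes with the entire stabilizer of $v$ in $O(d)$. Since this stabilizer acts trivially on $\operatorname{span}(v)$ and irreducibly on $v^\perp$, a Schur-type argument forces the two-parameter form
\[
M = \alpha\,vv^\top + \beta\,(I_d - vv^\top)
\]
for scalars $\alpha,\beta$.

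Third, I pin down $\alpha$ and $\beta$. The normalization $\operatorname{tr}(M)=\mathbb{E}\|u\|^2 = 1$ gives $\alpha + (d-1)\beta = 1$. For $\alpha = v^\top M v = \mathbb{E}[(u^\top v)^2]$, the same norm identity as above yields $(u^\top v)^2 = \frac{r}{d}\,v^\top PP^\top v$, so $\mathbb{E}[PP^\top]=I_d$ (property (ii)) gives $\alpha = \frac{r}{d}$ and hence $\beta = \frac{d-r}{d(d-1)}$. Substituting $M$ back,
\[
\frac{d}{r}\operatorname{tr}(SM) = \frac{d}{r}\Big((\alpha-\beta)\,v^\top S v + \beta\,\operatorname{tr}(S)\Big),
\]
and simplifying the coefficients (with $\alpha-\beta = \frac{r-1}{d-1}$) recovers $\frac{d(r-1)}{r(d-1)}\frac{v^\top Sv}{\|v\|^2} + \frac{d-r}{r(d-1)}\operatorname{tr}(S)$, restoring the $\|v\|$ factors by homogeneity.

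The main obstacle is the symmetry step: one must carefully verify that the distributional invariance of $P$ under left multiplication by $O(d)$ descends to invariance of the law of $u$ under the stabilizer of $v$, and then justify that a symmetric matrix commuting with this stabilizer must be a combination of $vv^\top$ and $I_d - vv^\top$ (irreducibility of the action on $v^\perp$, valid for $d\ge 2$; the case $d=1$ forces $r=d=1$ and is trivial). Everything else — the algebraic rewriting, the trace normalization, and the evaluation of $\alpha$ — is routine given the Haar properties.
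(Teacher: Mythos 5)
Your proof is correct and follows essentially the same route as the paper's: both reduce the problem to computing $M=\mathbb{E}[uu^\top]$ for a normalized projected direction $u$, use invariance of the Haar distribution under the stabilizer of $v$ in $O(d)$ to force the two-parameter form $M=\alpha\,vv^\top+\beta\,(I_d-vv^\top)$, and then pin down $\alpha$ and $\beta$ from $v^\top M v$ and $\operatorname{tr}(M)$. The only cosmetic differences are your unit-norm normalization of $u$ (the paper normalizes by $\|P^\top e_1\|$, so its constants are $\alpha=1$, $\operatorname{tr}(M)=d/r$ rather than your $\alpha=r/d$, $\operatorname{tr}(M)=1$) and your appeal to irreducibility/Schur's lemma where the paper verifies the same structural constraint by hand with explicit sign-flip and transposition matrices.
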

When bounding the first term in \Cref{prop:secondterm} by $\|S\|$,
the relationship between $\operatorname{tr}(S)$ and $\|S\|$ becomes important.
To this end, we introduce the following quantity.

\begin{definition}\label{def:effective-rank}
Let $S$ be a nonzero positive semidefinite matrix. 
We define the effective rank of $S$ as
\[
r_{\mathrm{eff}}(S) := \frac{\operatorname{tr}(S)}{\|S\|}.
\]
Based on this, we introduce the quantity
\[
\ell(S) := \frac{d(r-1) + r_{\mathrm{eff}}(S)(d-r)}{r(d-1)}.
\]
\end{definition}
Since $\ell(S)$ is monotonically increasing with respect to 
$r_{\mathrm{eff}}(S)$ and $r_{\mathrm{eff}}(S) \in [1, d]$, 
we have $\ell(S) \in [1, d/r]$. 
Furthermore, when $r \neq d$, $\ell(S) = d/r$ holds 
if and only if $r_{\mathrm{eff}}(S) = d$, 
which occurs when all eigenvalues of $S$ are equal, that is, when $S$ is a scalar multiple of the identity matrix.
Moreover, when $r = d$, we simply have $\ell(S) = 1$ for all $S$\footnote{Furthermore, even when the smoothness matrix $\mathbb{L}$ is known but the exact values of
$\operatorname{tr}(\mathbb{L})$ and $\|\mathbb{L}\|$ are unavailable,
our analysis still applies as long as upper bounds
$\operatorname{tr}(\mathbb{L}) \le \alpha$ and $\|\mathbb{L}\| \le \beta$
are available.
In this case, set $r_{\mathrm{eff}} \coloneqq \alpha/\beta$, define $\ell$ accordingly, and the rest of the analysis is unchanged.
For example, in binary logistic regression considered in~\cite{flynn2024},
$\alpha$ and $\beta$ are given as $1/4$.
As a result, the value of $\ell$ defined in this manner becomes $1$.}.

We are now ready to state an in-expectation convergence bound for RS-NSGD.
\begin{theorem}\label{Lsmoothrnsgd}
Suppose that \Cref{ass:lower,ass:Lsmooth,ass:unbiased,ass:pBCM} hold.
Let $\{x_k\}_{k\ge 0}$ be the sequence generated by RS-NSGD.
We set
\[
\bar{B} = \left\lceil \max\{1,\, B T^q\} \right\rceil,
\qquad
\bar{\eta} = \eta T^{-u},
\]
for parameters $B > 0$, $q > 0$, $\eta > 0$, and $u \in (0,1)$.
Then it holds that
\begin{align*}
\frac{1}{T} \sum_{k=0}^{T-1} \mathbb{E}[\|\nabla F(x_k)\|]
\le
\frac{\Delta_0}{\tau \eta T^{1-u}}
\\ + \frac{\eta \ell(\mathbb{L})\|\mathbb{L}\|}{2\tau T^u}
+ \frac{4\sigma}{\max\{1,BT^q\}^{(p-1)/p}}.
\end{align*}
\end{theorem}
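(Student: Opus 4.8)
The plan is to run a descent argument on the matrix-smoothness inequality \eqref{onesided-Lplus}, splitting the per-step randomness into two layers: first the Haar matrix $P_k$ (with the minibatch fixed), then the minibatch samples. Writing $u_k = P_k^\top g_k$ and substituting the RS-NSGD update $x_{k+1}-x_k = -\bar\eta\, P_k u_k/\|u_k\|$ into \eqref{onesided-Lplus} gives
\[
F(x_{k+1}) - F(x_k) \le -\bar\eta\,\frac{\langle \nabla F(x_k),\, P_kP_k^\top g_k\rangle}{\|P_k^\top g_k\|} + \frac{\bar\eta^2}{2}\,\frac{g_k^\top P_kP_k^\top \mathbb{L} P_kP_k^\top g_k}{\|P_k^\top g_k\|^2}.
\]
Since $P_k$ is drawn independently of the samples, I would condition on $g_k$ and take the expectation over $P_k$ term by term. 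When $g_k=0$ the step vanishes by the convention $\phi(0)=0$, so that case contributes nothing and can be set aside; for $g_k\neq 0$, property (iv) guarantees $\|P_k^\top g_k\|>0$ almost surely, so both ratios are well defined.

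These two conditional expectations are exactly what \Cref{prop:tau,prop:secondterm} deliver. For the linear term, \Cref{prop:tau} with $x=g_k$ gives $\mathbb{E}_{P_k}[P_kP_k^\top g_k/\|P_k^\top g_k\|] = \tau\, g_k/\|g_k\|$, so the conditional expectation of the linear term equals $\tau\,\langle \nabla F(x_k), g_k\rangle/\|g_k\|$. For the quadratic term, \Cref{prop:secondterm} with $v=g_k$ and $S=\mathbb{L}$ yields
\[
\frac{d(r-1)}{r(d-1)}\frac{g_k^\top \mathbb{L} g_k}{\|g_k\|^2} + \frac{d-r}{r(d-1)}\operatorname{tr}(\mathbb{L}) \le \frac{d(r-1)}{r(d-1)}\|\mathbb{L}\| + \frac{d-r}{r(d-1)}\,r_{\mathrm{eff}}(\mathbb{L})\,\|\mathbb{L}\| = \ell(\mathbb{L})\,\|\mathbb{L}\|,
\]
using $g_k^\top\mathbb{L} g_k/\|g_k\|^2 \le \|\mathbb{L}\|$, the identity $\operatorname{tr}(\mathbb{L}) = r_{\mathrm{eff}}(\mathbb{L})\|\mathbb{L}\|$, and \Cref{def:effective-rank}. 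Thus after the $P_k$-expectation, $\mathbb{E}_{P_k}[F(x_{k+1})] - F(x_k) \le -\bar\eta\tau\,\langle \nabla F(x_k), g_k\rangle/\|g_k\| + \tfrac{\bar\eta^2}{2}\ell(\mathbb{L})\|\mathbb{L}\|$. The point worth stressing is that normalizing by the projected norm $\|P_k^\top g_k\|$ rather than $\|g_k\|$ is precisely what these two propositions are engineered to absorb, so the subspace nonlinearity collapses cleanly at this stage.

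It then remains to take the sample expectation and handle the normalized-gradient term. Here I would invoke the standard normalized-SGD estimate $\langle \nabla F(x_k), g_k\rangle/\|g_k\| \ge \|\nabla F(x_k)\| - 2\|g_k - \nabla F(x_k)\|$, which follows from $\langle \nabla, g\rangle/\|g\| = \|g\| - \langle g-\nabla, g\rangle/\|g\| \ge \|g\| - \|g-\nabla\|$ together with the reverse triangle inequality. The noise is controlled under \Cref{ass:pBCM}: because the $\bar B$ minibatch terms are i.i.d.\ and mean zero, a von Bahr--Esseen-type moment inequality gives $\mathbb{E}[\|g_k-\nabla F(x_k)\|^p] \le 2\sigma^p/\bar B^{p-1}$, and Jensen's inequality turns this into $\mathbb{E}[\|g_k-\nabla F(x_k)\|] \le 2\sigma/\bar B^{(p-1)/p}$. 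Substituting into the descent inequality, taking total expectation, telescoping over $k=0,\dots,T-1$ with \Cref{ass:lower} (so the $F$-differences collapse to $\Delta_0$), and dividing by $\bar\eta\tau T$ gives
\[
\frac{1}{T}\sum_{k=0}^{T-1}\mathbb{E}[\|\nabla F(x_k)\|] \le \frac{\Delta_0}{\bar\eta\tau T} + \frac{\bar\eta\,\ell(\mathbb{L})\|\mathbb{L}\|}{2\tau} + \frac{4\sigma}{\bar B^{(p-1)/p}},
\]
and plugging in $\bar\eta = \eta T^{-u}$ and $\bar B \ge \max\{1, BT^q\}$ produces the three stated terms.

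Given that \Cref{prop:tau,prop:secondterm} are already available, the main obstacle within this proof is less the telescoping (routine) than two bookkeeping points. First, one must verify the quadratic collapse is compatible with the subsequent sample-expectation: the bound $g_k^\top\mathbb{L} g_k/\|g_k\|^2 \le \|\mathbb{L}\|$ holds pointwise in $g_k$, so the residual $\ell(\mathbb{L})\|\mathbb{L}\|$ survives unchanged under the outer expectation. Second, sourcing the correct $p$-th-moment constant for the minibatch mean so that the final noise constant is a clean $4\sigma$ rather than a $p$-dependent quantity is the genuinely non-elementary ingredient; the factor-$2$ von Bahr--Esseen bound combined with $2\cdot 2^{1/p}\le 4$ is what keeps the constant uniform over $p\in(1,2]$.
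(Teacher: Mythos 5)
Your proposal is correct and follows essentially the same route as the paper's proof: the matrix-smoothness descent inequality, two-layer conditioning (minibatch first, then Haar expectation) resolved via \Cref{prop:tau,prop:secondterm}, the normalized-gradient inequality $\langle\nabla F,g\rangle/\|g\|\ge\|\nabla F\|-2\|g-\nabla F\|$, the minibatch $p$-th-moment bound (which the paper cites from \citet{huebler2025} rather than re-deriving via von Bahr--Esseen as you do), and the same telescoping. The one point you treat too casually is the event $\{g_k=0\}$: it cannot simply be \emph{set aside}, since the final bound involves $\mathbb{E}[\|\nabla F(x_k)\|]$ over the whole space; the fix (the paper's Case~1) is to note that on this event $\|\nabla F(x_k)\|-2\|g_k-\nabla F(x_k)\|=-\|\nabla F(x_k)\|\le 0$, so including it only weakens the lower bound and the same inequality survives.
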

As noted earlier, when $r = d$, RS-NSGD reduces to NSGD. Indeed, substituting
$r=d$ and $\|\mathbb{L}\|\le L$ into \Cref{Lsmoothrnsgd} yields a convergence bound that exactly matches the known result for NSGD~\citep{huebler2025}.

From \Cref{Lsmoothrnsgd}, we can derive the corresponding oracle complexity bounds.

We first consider the setting where the exact value of $p$ and other problem-parameters are unknown.
In this case, we take $u=\tfrac{1}{2}$ and $q=1$
\footnote{
As in \cite{huebler2025}, when $p$ is unknown in $(1,2]$, no fixed $q\neq 1$ can uniformly improve the $\epsilon$-exponent for all $p\in(1,2]$.
Moreover, when $p=2$, the $\epsilon$-exponent is uniquely minimized at $q=1$. By continuity in $p$, the same  holds when $p$ is unknown in $(1,2)$.
Therefore, even if one only knows that $p<2$, 
choosing $(u,q)=(\tfrac12,1)$ remains a natural default.
}
.
With this choice, taking $\eta$ and $B$ to be constants,
the resulting oracle complexity is given by
\[
\mathcal{O}\!\left(
r\left(\frac{\Delta_0}{\tau \epsilon}\right)^{4}
+
r\left(\frac{\ell(\mathbb{L})\,\|\mathbb{L}\|}{\tau \epsilon}\right)^{4}
+
r\left(\frac{\sigma}{\epsilon}\right)^{\frac{2p}{p-1}}
\right).
\]

For comparison, the corresponding bound for the full-dimensional method is obtained by substituting $r=d$ into the above expression:
\[
\mathcal{O}\!\left(
d\left(\frac{\Delta_0}{\epsilon}\right)^{4}
+
d\left(\frac{\|\mathbb{L}\|}{\epsilon}\right)^{4}
+
d\left(\frac{\sigma}{\epsilon}\right)^{\frac{2p}{p-1}}
\right).
\]
In particular, when $p=2$ (so that $\frac{2p}{p-1}=4$), the RS-NSGD bound can be written as
\[
\mathcal{O}\!\Bigl(
\epsilon^{-4}\, r\Bigl(\tfrac{\Delta_0^{4}}{\tau^{4}}
+\tfrac{(\ell(\mathbb{L})\|\mathbb{L}\|)^{4}}{\tau^{4}}
+\sigma^{4}\Bigr)
\Bigr),
\]
whereas the full-dimensional bound becomes
\[
\mathcal{O}\!\Bigl(
\epsilon^{-4}\, d\bigl(\Delta_0^{4}+\|\mathbb{L}\|^{4}+\sigma^{4}\bigr)
\Bigr).
\]
Thus, for instance when $\ell(\mathbb{L})$ is small, some subspace dimensions $r<d$ can reduce the leading $\epsilon^{-4}$ coefficient and yield a smaller oracle complexity than the full-dimensional method.

When $p<2$, the noise term dominates and the bounds simplify to
\[
\text{RS-NSGD: }\ \mathcal{O}\!\left(r\left(\frac{\sigma}{\epsilon}\right)^{\frac{2p}{p-1}}\right)
\qquad
\text{NSGD: }\ \mathcal{O}\!\left(d\left(\frac{\sigma}{\epsilon}\right)^{\frac{2p}{p-1}}\right),
\]
which shows that RS-NSGD improves the leading dependence by a factor $r/d$.

Next, when $p$ and the other problem-parameters are known, take
$u=\tfrac{1}{2}$ and $q=\tfrac{p}{2(p-1)}$.
By setting
\[
\eta \coloneqq \sqrt{\frac{\Delta_0}{\ell(\mathbb{L})\,\|\mathbb{L}\|}},
\qquad
B \coloneqq
\left(
\frac{\sigma\,\tau}{\sqrt{\Delta_0\ell(\mathbb{L})\|\mathbb{L}\|}}
\right)^{\frac{p}{p-1}},
\]
we obtain the oracle complexity bound
\[
\mathcal{O}\!\left(
r\,\frac{\ell(\mathbb{L})}{\tau^2}
\frac{\Delta_0\,\|\mathbb{L}\|}{\epsilon^{2}}
\left(\frac{\sigma}{\epsilon}\right)^{\frac{p}{p-1}}
\right).
\]

For comparison, the full-dimensional case (NSGD, i.e., $r=d$) is obtained by simply setting $r=d$ in the above bound:
\[
\mathcal{O}\!\left(
d\,
\frac{\Delta_0\,\|\mathbb{L}\|}{\epsilon^{2}}
\left(\frac{\sigma}{\epsilon}\right)^{\frac{p}{p-1}}
\right).
\]
Hence, the RS-NSGD rate is a factor $\frac{r}{d}\frac{\ell(\mathbb{L})}{\tau^2}$ times the NSGD rate. Since $\ell(\mathbb{L}) \le d/r$, this factor satisfies 
$\frac{r}{d}\frac{\ell(\mathbb{L})}{\tau^2}\le 1/\tau^2$; 
moreover, $\tau$ stays close to $1$ even when $r$ is small as shown in \Cref{fig:tau}, so this factor is roughly $1$.
Therefore, except in the extreme case $r_{\mathrm{eff}}(\mathbb{L})=d$, where $\ell(\mathbb{L})$ essentially saturates $d/r$, RS-NSGD can reduce the oracle cost.

\subsection{High-probability Bound}
We now derive the following high-probability bound.
\begin{theorem}\label{high:minibatch-RNSGD-prob}
We assume the same setting with
\Cref{Lsmoothrnsgd} for $F$, the gradient , $\bar{B}$ and $\bar{\eta}$ using   
parameters $B > 0$, $q > 0$, $\eta > 0$, and $u \in (0,1)$.
Then, for the sequence $\{x_k\}_{k\ge 0}$ of RS-NSGD,
the following holds with probability at least $1 - \delta$:
\begin{align*}
\frac{1}{T} \sum_{k=0}^{T-1} \|\nabla F(x_k)\|
\le \frac{2 \Delta_0}{\tau \eta T^{1-u}}
+ 17\frac{\frac{d}{r} \sqrt{\Delta_0 L}}{\tau^2 T} \log (1/\delta)
\\
+ \frac{\frac{d}{r} \eta L}{\tau T^u} \big( 1 + 12\frac{\sqrt{\frac{d}{r}}}{\tau} \log (1/\delta) \big)
+ \frac{8 \sigma}{\max \{1, B T^q\}^{\frac{p-1}{p}}}.
\end{align*}
\end{theorem}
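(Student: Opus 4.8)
The plan is to mirror the in-expectation argument behind \Cref{Lsmoothrnsgd}, but to keep the two sources of randomness---the minibatch $\xi_k$ and the Haar matrix $P_k$---separate and to replace each expectation step by a martingale-concentration step. Write $d_k := P_kP_k^\top g_k/\|P_k^\top g_k\|$ and $\epsilon_k := g_k-\nabla F(x_k)$. The starting point is the descent inequality from \Cref{ass:Lsmooth}. Because $P_k^\top P_k=\frac{d}{r}I_r$, the step has deterministic norm $\|x_{k+1}-x_k\|=\bar{\eta}\sqrt{d/r}$, so the quadratic term is bounded pathwise by $\frac{\bar{\eta}^2}{2}d_k^\top\mathbb{L}d_k\le\frac{\bar{\eta}^2}{2}\frac{d}{r}\|\mathbb{L}\|\le\frac{\bar{\eta}^2}{2}\frac{d}{r}L$; this is the origin of the $\frac{d}{r}\eta L/(\tau T^u)$ factor and is where the high-probability route deliberately uses the crude $\frac{d}{r}L$ rather than the $\ell(\mathbb{L})\|\mathbb{L}\|$ obtained in expectation from \Cref{prop:secondterm}.

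Next I would decompose the linear term $\langle\nabla F(x_k),d_k\rangle=\tau\langle\nabla F(x_k),g_k/\|g_k\|\rangle+D_k$, where $D_k:=\langle\nabla F(x_k),d_k-\tau g_k/\|g_k\|\rangle$. By \Cref{prop:tau} (applied to the fixed vector $g_k$), $D_k$ is a martingale difference in the $P_k$-randomness: $\mathbb{E}[D_k\mid\mathcal{F}_k,g_k]=0$. The elementary normalized-descent estimate $\langle\nabla F(x_k),g_k/\|g_k\|\rangle\ge\|\nabla F(x_k)\|-2\|\epsilon_k\|$ then converts the first piece into the signal $\tau\|\nabla F(x_k)\|$ minus the heavy-tailed noise $2\tau\|\epsilon_k\|$. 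Telescoping the resulting per-step inequality and using $F(x_0)-F_\ast=\Delta_0$ gives, pathwise,
\[
\bar{\eta}\tau\sum_{k=0}^{T-1}\|\nabla F(x_k)\|\le\Delta_0+2\bar{\eta}\tau\sum_{k=0}^{T-1}\|\epsilon_k\|+\bar{\eta}\sum_{k=0}^{T-1}(-D_k)+\frac{\bar{\eta}^2 dLT}{2r}.
\]
After dividing by $\bar{\eta}\tau T$ and inserting $\bar{\eta}=\eta T^{-u}$, the deterministic terms already reproduce $\frac{\Delta_0}{\tau\eta T^{1-u}}$ and the leading $\frac{d}{r}\eta L/(\tau T^u)$ contribution; it remains to control the two random sums with probability $1-\delta$.

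For the heavy-tailed sum I would use the minibatch-reduced moment implied by \Cref{ass:pBCM}, namely $\mathbb{E}[\|\epsilon_k\|\mid\mathcal{F}_k]\lesssim\sigma/\bar{B}^{(p-1)/p}$, which is exactly the bias controlled in the in-expectation proof and yields the $O(\sigma/\bar{B}^{(p-1)/p})$ term; the stochastic fluctuation of this nonnegative sum is absorbed at the cost of enlarging the constant from $4$ to $8$, and---crucially---this term carries no $\log(1/\delta)$ factor because the bounded $p$-th moment precludes exponential concentration and the heavy tail is handled only in a moment/bias sense. For the Haar martingale $\sum_k(-D_k)$ I would exploit that, conditionally on $(\mathcal{F}_k,g_k)$, $D_k$ is a bounded random variable with $|D_k|\le 2\sqrt{d/r}\,\|\nabla F(x_k)\|$ and conditional variance $\mathbb{E}[D_k^2\mid\mathcal{F}_k,g_k]\le\frac{d}{r}\|\nabla F(x_k)\|^2$ (using $\|d_k\|^2=d/r$ together with \Cref{prop:tau}); hence it is a Bernstein/Freedman-type martingale difference, producing the $\log(1/\delta)$ corrections scaling as $(d/r)\sqrt{\Delta_0 L}/\tau^2$ and $(d/r)^{3/2}\eta L/\tau^2$.

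The main obstacle is that the predictable variance and the increment bound of the Haar martingale both grow with $\|\nabla F(x_k)\|$, whereas the quantity on the left is $\sum_k\|\nabla F(x_k)\|$ to the first power---unlike unnormalized SGD, where the variance self-bounds against the $\sum_k\|\nabla F(x_k)\|^2$ already present. To close this gap I would first establish a uniform high-probability bound $\max_k\|\nabla F(x_k)\|\lesssim\sqrt{L\,\Delta_0}$ by controlling the trajectory suboptimality $F(x_k)-F_\ast$ along the path; the bounded per-step displacement $\bar{\eta}\sqrt{d/r}$ and the smoothness consequence $\|\nabla F(x)\|^2\le 2L(F(x)-F_\ast)$ make this a bootstrap/stopping-time argument intertwined with the martingale bound itself. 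Feeding this uniform bound into $\sum_k\|\nabla F(x_k)\|^2\le(\max_k\|\nabla F(x_k)\|)\sum_k\|\nabla F(x_k)\|$ lets a Young-type inequality absorb a term $\tfrac{\tau}{2}\bar{\eta}\sum_k\|\nabla F(x_k)\|$ back into the left-hand side, leaving precisely the claimed $\log(1/\delta)$ corrections. Carefully tracking constants through this self-referential step, and verifying that the stopping-time event fails with probability at most $\delta$, is where the bulk of the technical effort will lie.
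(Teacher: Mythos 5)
Your skeleton (descent inequality, crude pathwise bound $\|G_k\|^2\le d/r$, a Haar martingale term, a bias term from \Cref{ass:pBCM}) matches the paper's, but two of your key steps have genuine gaps. The first is fatal to your decomposition. By splitting $\langle\nabla F(x_k),d_k\rangle=\tau\langle\nabla F(x_k),g_k/\|g_k\|\rangle+D_k$, you leave the pathwise heavy-tailed sum $2\bar{\eta}\tau\sum_k\|\epsilon_k\|$ on the right-hand side, and you claim it can be bounded by roughly twice its conditional mean ``at the cost of enlarging the constant from $4$ to $8$'' with no $\delta$-dependence. Under \Cref{ass:pBCM} alone this is not obtainable: $\|\epsilon_k\|$ has only a bounded $p$-th moment, so Markov's inequality costs a factor $1/\delta$, and even a martingale von Bahr--Esseen/Fuk--Nagaev argument gives a deviation of order $T^{1/p}\delta^{-1/p}\sigma/\bar{B}^{(p-1)/p}$, which is comparable to the mean $T\sigma/\bar{B}^{(p-1)/p}$ only when $\delta\gtrsim T^{-(p-1)}$ --- a restriction absent from the theorem. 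The paper avoids this entirely by conditioning more coarsely: it sets $\psi_k:=\nabla F(x_k)^\top G_k$ and $\phi_k:=\mathbb{E}[\psi_k\mid\mathcal{F}_k]$, integrating out the minibatch \emph{and} the Haar matrix together. Because the update is normalized, $|\psi_k|\le\sqrt{d/r}\,\|\nabla F(x_k)\|$ holds pathwise no matter how large the noise is, so the \emph{entire} fluctuation $\psi_k-\phi_k$ is a bounded martingale difference to which the sub-Gaussian inequality (Lemma~11 of \citet{huebler2025}) applies; the heavy tail enters only through the conditional bias $\phi_k\ge\tau\bigl(\|\nabla F(x_k)\|-4\sigma/\bar{B}^{(p-1)/p}\bigr)$, which never needs to be concentrated. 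The factor $8$ in the statement is $2\cdot 4$, the $2$ coming from the final multiplication by $2/(\tau\bar{\eta}T)$ after the Young-type absorption --- not from concentrating a noise sum.

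The second gap is your proposed uniform bound $\max_k\|\nabla F(x_k)\|\lesssim\sqrt{L\Delta_0}$ via a stopping-time/bootstrap argument. This bound is false in general: with normalized steps under heavy-tailed noise, $F$ need not decrease, the iterates can drift, and $\|\nabla F(x_k)\|$ can genuinely grow (consider a one-dimensional quadratic with dominant noise, where the iterates perform a random walk of step $\bar{\eta}$, so gradients of size $\sim L\sqrt{k}\,\bar{\eta}$ are typical). The paper instead uses the \emph{deterministic} growth bound $\|\nabla F(x_k)\|\le\|\nabla F(x_0)\|+k\bar{\eta}L\sqrt{d/r}$, which follows pathwise from $L$-smoothness and $\|x_{k+1}-x_k\|\le\bar{\eta}\sqrt{d/r}$, and plugs it into a single fixed choice of $\lambda$ in Lemma~11 (so no stopping time or self-referential step is needed); $\|\nabla F(x_0)\|$ is then bounded by $\sqrt{2\Delta_0L}$ via \Cref{lem:init-grad-bound}. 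This growth term is precisely the origin of the $12\,(d/r)^{3/2}\eta L\log(1/\delta)/(\tau^2T^u)$ contribution in the statement you were asked to prove --- the term your uniform bound would (incorrectly) eliminate. If you replace your bootstrap by this deterministic bound and adopt the paper's coarser conditioning to fix the first gap, your Freedman-type accounting of the variance and increment bounds does recover the stated constants.
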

Similarly to \Cref{Lsmoothrnsgd}, substituting $r = d$ into \Cref{high:minibatch-RNSGD-prob} yields a bound that exactly matches the known result for NSGD~\citep{huebler2025}.

From \Cref{high:minibatch-RNSGD-prob}, we can  derive the oracle complexity.
When the problem-parameters (including $p$) are unknown, we adopt, as a natural default, the same choice used in the in-expectation setting and set
$u=\tfrac{1}{2}$ and $q=1$.
Taking $\eta$ and $B$ to be constants,
the resulting oracle complexity is given by
\[
\tilde{\mathcal{O}}\!\left(
r\left(\frac{\Delta_0}{\tau\epsilon}\right)^4
+
\frac{d^6}{r^5}
\left(\frac{L}{\tau^2 \epsilon}\right)^4
+
r\left(\frac{\sigma}{\epsilon}\right)^{\frac{2p}{p-1}}
\right).
\]
For comparison, the full-dimensional method corresponds to NSGD (i.e., $r=d$):
\[
\tilde{\mathcal{O}}\!\left(
d\left(\frac{\Delta_0}{\epsilon}\right)^4
+
d\left(\frac{L}{\epsilon}\right)^4
+
d\left(\frac{\sigma}{\epsilon}\right)^{\frac{2p}{p-1}}
\right).
\]
In particular, when $p=2$, both bounds scale as $\epsilon^{-4}$, and RS-NSGD can yield a smaller oracle complexity than NSGD, depending on the problem constants, for some $r<d$.
When $p<2$, similarly to the in-expectation case, RS-NSGD improves the leading term by a factor $r/d$ compared to NSGD.

Next, when the value of $p$ as well as the other problem-parameters are known,
we adopt the same choice used in the in-expectation setting and take
$u=\tfrac{1}{2}$ and $q=\tfrac{p}{2(p-1)}$.
Set
\[
\eta \coloneqq
\left(\frac{r}{d}\right)^{\frac{3}{4}}
\sqrt{\frac{\Delta_0 \tau}{L}},
\qquad
B \coloneqq
\left(
\frac{\sigma \tau^{\frac{3}{2}}}{\sqrt{\Delta_0 L}}
\left(\frac{r}{d}\right)^{\frac{3}{4}}
\right)^{\frac{p}{p-1}}.
\]
Then 
the oracle complexity becomes
\[
\tilde{\mathcal{O}}\!\left(
\left(\frac{d}{r^{1/3}\tau^2}\right)^{3/2}
\frac{\Delta_0 L}{\epsilon^2}
\left(\frac{\sigma}{\epsilon}\right)^{\frac{p}{p-1}}
\right).
\]
In this case, the bound is minimized by the choice $r=d$ (i.e., NSGD).

Overall, these results yield a theoretical guideline for when to use randomized subspace projection:
the answer depends on the available problem information—including whether $p$ is unknown, only known to satisfy $p\neq 2$, or known—and on whether one needs in-expectation or high-probability guarantees.

\section{Experiments}

\begin{figure*}[t]
  \centering
  \begin{subfigure}[t]{0.32\textwidth}
    \centering
    \includegraphics[width=\linewidth]{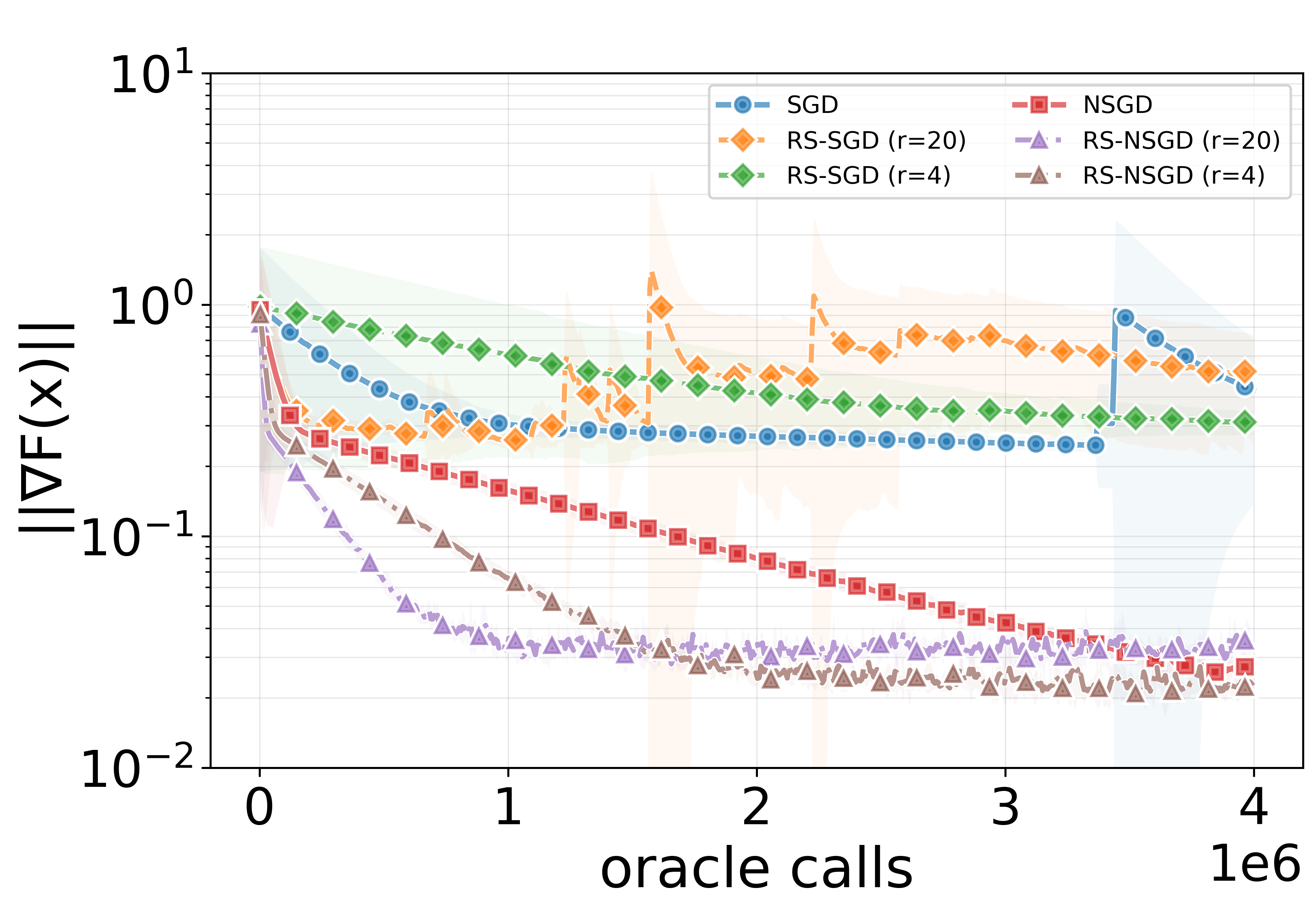}
    \caption{$\rho=4$}
    \label{fig:quad-v4}
  \end{subfigure}\hfill
  \begin{subfigure}[t]{0.32\textwidth}
    \centering
    \includegraphics[width=\linewidth]{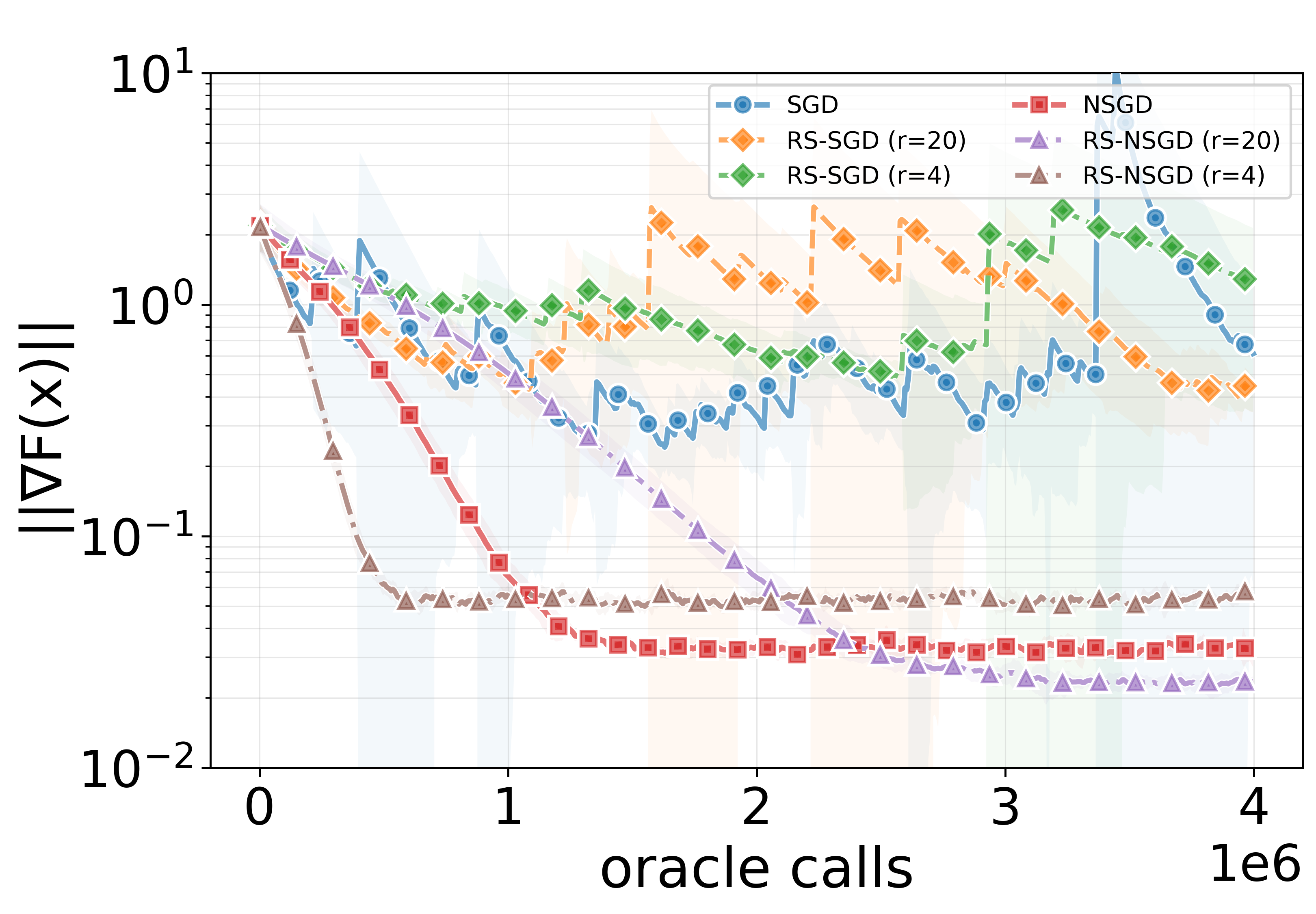}
    \caption{$\rho=20$}
    \label{fig:quad-v20}
  \end{subfigure}\hfill
  \begin{subfigure}[t]{0.32\textwidth}
    \centering
    \includegraphics[width=\linewidth]{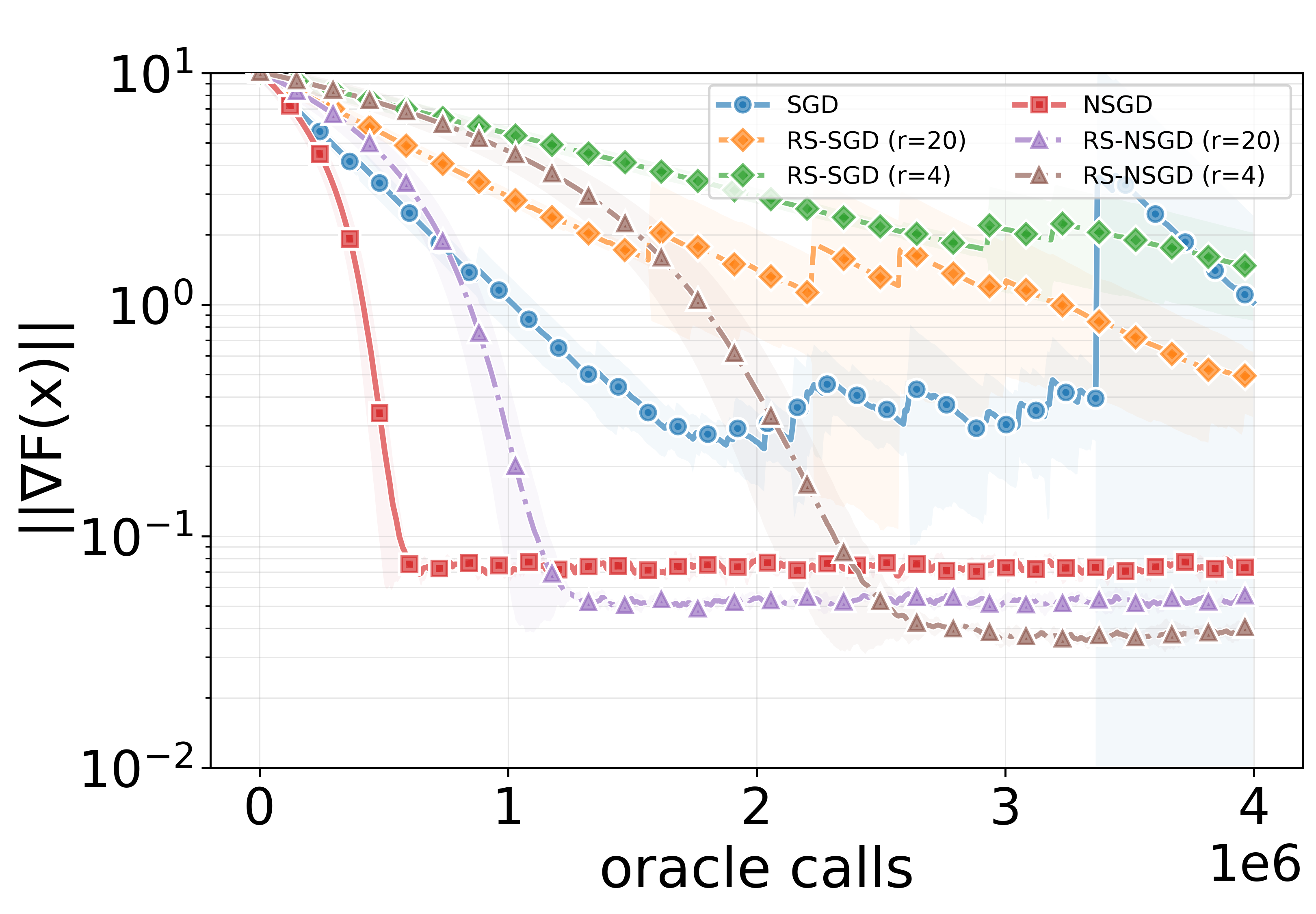}
    \caption{$\rho=100$}
    \label{fig:quad-v100}
  \end{subfigure}
  \caption{Synthetic quadratic with heavy-tailed noise ($d=100$, $\bar{B}=4$): oracle calls vs.\ $\|\nabla F(x)\|=\|\Lambda x\|$ (mean $\pm$ 1 std., 5 seeds).}
  \label{fig:quad-v-all}
\end{figure*}

\begin{figure*}[t]
  \centering
  \scalebox{0.65}{
  \begin{minipage}[t]{0.49\textwidth}
    \centering
    \includegraphics[width=\linewidth]{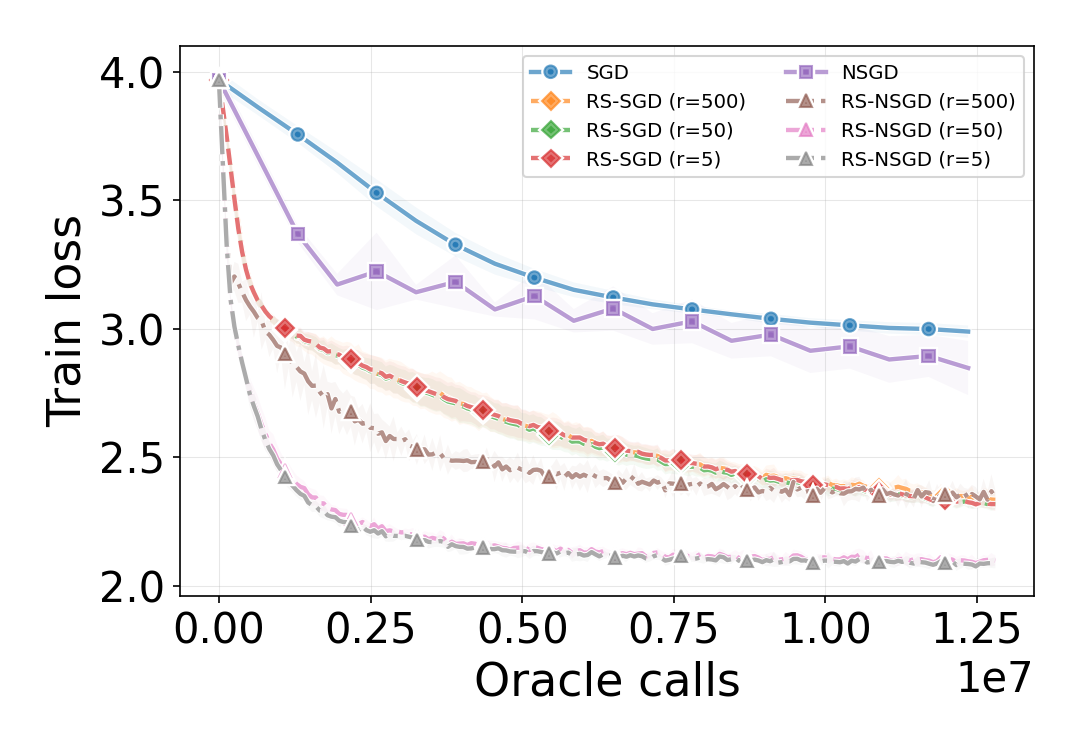}
    {\small\textbf{(a)} PTB}
  \end{minipage}\hfill
  \begin{minipage}[t]{0.49\textwidth}
    \centering
    \includegraphics[width=\linewidth]{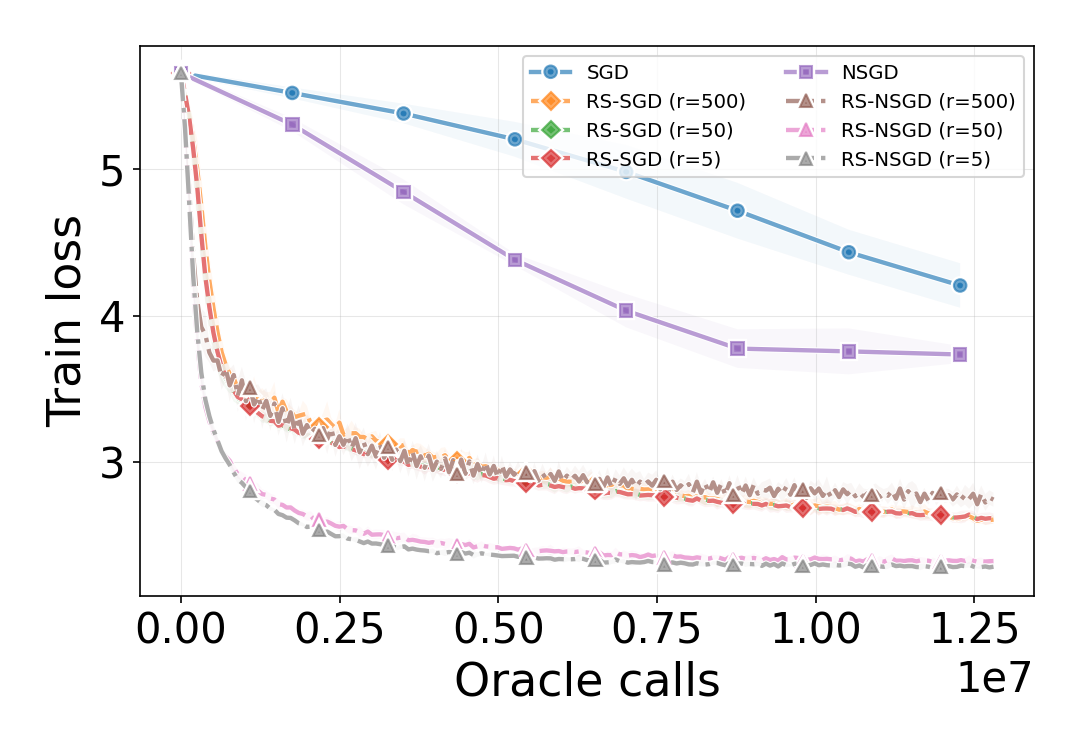}
    {\small\textbf{(b)} WikiText-2}
  \end{minipage}
  }
  \caption{Character-level language modeling on PTB ($d=5079,\bar{B}=128$) and WikiText-2 ($d=13700,\bar{B}=128$): oracle calls vs.\ training loss (mean $\pm$ 1 std., 5 seeds).}

  \label{fig:charlm-trainloss}
\end{figure*}
The purpose of this section is to empirically validate the behavior suggested by our theoretical analysis.
We complement our theoretical results with numerical experiments on both controlled synthetic problems and real datasets.
Further experimental details (including tuning details and additional results) are provided in \Cref{app:exp_details}.

\subsection{Synthetic quadratic with heavy-tailed noise}
\label{sec:exp-synth}
We consider a stochastic quadratic objective in dimension $d=100$,
\begin{equation}
f(x;\xi)\coloneqq \tfrac12 x^\top \Lambda x + \langle \xi, x\rangle,
\label{eq:synth-obj}
\end{equation}
where $\xi\in\mathbb{R}^d$ is sampled from a \emph{symmetrized Pareto} distribution with tail index
$\hat{p}=1.2$. In particular, $\xi$ has a bounded $p$-th moment for any $p<\hat{p}$.
We set $\Lambda=\diag(\lambda_1,\ldots,\lambda_d)$ as follows:
\begin{equation}
\Lambda=\diag\!\left(1,\frac{\rho-1}{d-1},\ldots,\frac{\rho-1}{d-1}\right),
\label{eq:spectrum}
\end{equation}
where $\rho\le d$.
This choice ensures that $r_{\mathrm{eff}}(\Lambda)=\mathrm{tr}(\Lambda)/\|\Lambda\|=\rho$.
We report results for $\rho\in\{4,20,100\}$.

We compare six methods: SGD, NSGD, RS-SGD with $r\in\{4,20\}$, and RS-NSGD with $r\in\{4,20\}$, using minibatch size $\bar{B}=4$.
We tune the stepsize $\bar{\eta}$ for each method and report trajectories under a fixed oracle-call budget; details are given in \Cref{app:exp_details}.

\Cref{fig:quad-v-all} summarize the results for $\rho\in\{4,20,100\}$, plotting the number of oracle calls (x-axis) versus $\|\nabla F(x)\|$ (y-axis).
Each curve reports the mean over $5$ evaluation seeds, with a shaded band indicating $\pm 1$ standard deviation.
Note that NSGD can be interpreted as a special case of RS-NSGD with $r=d=100$ (and similarly, SGD is RS-SGD with $r=d=100$).
Across all settings, SGD and RS-SGD ($r\in\{4,20\}$) exhibit limited progress under heavy-tailed noise and often fail to consistently decrease $\|\nabla F(x)\|$. Moreover, the wider standard-deviation bands of SGD and RS-SGD suggest less stable optimization behaviour, plausibly due to their stronger sensitivity to heavy-tailed noise.
In contrast, NSGD and RS-NSGD ($r\in\{4,20\}$) show a clear decrease of $\|\nabla F(x)\|$.
Moreover, when the effective rank is largest ($r_{\mathrm{eff}}(\Lambda)=\rho=100$), NSGD achieves the fastest reduction per oracle call, whereas for smaller effective ranks ($\rho\in\{4,20\}$), RS-NSGD is more oracle-efficient than NSGD. This behavior is compatible with our theory.

\subsection{Character-level language modeling}
\label{sec:exp-charlm}
Since stochastic gradients can be heavy-tailed in language modeling \citep{zhang2020clipped}, we use language modeling as a real-data testbed. We evaluate on the character-level Penn Treebank (PTB) \cite{marcus1993ptb} and WikiText-2 (v1) \cite{merity2017pointer} benchmarks.
We train a single-layer GRU language model \cite{cho2014gru} with embedding dimension $E=8$ and hidden size $H=28$.

We train a character-level language model. The character vocabulary is built from the training split (with an \texttt{<unk>} token for unseen characters in validation/test).
At each optimization step, we sample $\bar{B}=128$ length-$35$ subsequences i.i.d. from the training corpus (by choosing start positions uniformly with replacement).
The resulting parameter dimension is $d=5079$ on PTB and $d=13700$ on WikiText-2.
For plotting, we compute the training loss as the average mini-batch loss over five independent mini-batches.
We consider subspace dimensions $r\in\{5,50,500\}$ for RS-SGD/RS-NSGD.
We tune the stepsize $\bar{\eta}$ for each method; the tuning and evaluation protocol is described in \Cref{app:exp_details}.

\Cref{fig:charlm-trainloss} reports oracle calls versus training loss on PTB and WikiText-2.
Each curve shows the mean over five seeds, with shaded regions indicating $\pm 1$ standard deviation.
Across both datasets, RS-NSGD with $r\in\{5,50\}$ achieves the strongest performance under the oracle budget.

\section{Conclusion}
We proved high-probability convergence for randomized subspace SGD under sub-Gaussian noise, complementing prior expectation-based results. We also proposed RS-NSGD and established in-expectation and high-probability guarantees under a bounded $p$-th moment assumption, characterizing when it can improve oracle complexity over NSGD. Our guarantees rely on Haar-random subspace properties (\Cref{sec:haar-prop}). Extending them to other constructions (e.g., Gaussian or block-coordinate subspaces) is nontrivial; for instance, Gaussian matrices are unbounded, and block-coordinate schemes can satisfy $\mathbb{P}(P^\top x=0)>0$ for some fixed $x\neq 0$. Establishing analogous guarantees by replacing the Haar-specific ingredients is an interesting direction for future work.

\section*{Acknowledgements}
This work was partially supported by JSPS Grant-in-Aid for Scientific Research(B) JP23H03351 and JST CREST Grant Number JPMJCR24Q2.
\bibliographystyle{plainnat}
\bibliography{references}

\onecolumn
\appendix

\section{Related Work}
\label{app:related-work}
\subsection{Optimization via Random Subspaces}
In many large-scale optimization problems arising in PDE-constrained optimization and machine learning, the memory required to store intermediate variables for reverse-mode automatic differentiation becomes prohibitive. 
In such settings, forward-mode automatic differentiation becomes more attractive due to its significantly lower memory footprint.
This issue is particularly pronounced in modern machine learning models, such as deep neural networks
and large transformer architectures, where the activation memory dominates the memory usage.
 However, naively computing $d$ directional derivatives in a $d$-dimensional space is computationally very expensive. 
One possible way to reduce computation is to work in random low-dimensional subspaces: Using  a random matrix $P_k \in \mathbb{R}^{d \times r}$ with $r \ll d$, one approximates the full gradient $\nabla F(x_k)$ via directional derivatives along the columns of $P_k$ and performs the update
\[
  x_{k+1} = x_k - \bar{\eta} P_k P_k^\top \nabla F(x_k).
\]
This subspace-descent approach was developed for deterministic optimization 
by Kozak et al.~\cite{kozak2021}.

Gradient compression techniques have been widely studied to alleviate communication bottlenecks in distributed SGD.
Classical approaches include quantization or error-feedback variants, along with low-rank projection methods such as
PowerSGD~\cite{seide20141bit,karimireddy2019errorfeedback,vogels2019powersgd}. More recently, randomized linear compressors—including those based on Haar matrices—have been shown to provide convergence in non-convex optimization~\cite{flynn2024}. This provides theoretical support for using Haar matrices as randomized compressors in optimization.
Rather than communicating the full stochastic gradient $g_k\in\mathbb{R}^d$ at the current iterate,
we draw a random $r$-dimensional subspace represented by a matrix $P_k\in\mathbb{R}^{d\times r}$
(e.g., with orthonormal columns) and perform the projected update
\[
x_{k+1}=x_k-\bar{\eta}\,P_kP_k^\top g_k
      \;=\; x_k-\bar{\eta}\,P_k\bigl(P_k^\top g_k\bigr).
\]
In a minibatch setting where
\[
g_k \;=\; \frac{1}{\bar{B}}\sum_{j=1}^{\bar{B}} \nabla f(x_k,\xi_k^j),
\]
the communicated quantity is the projected per-sample (or per-worker) gradient
\[
u_k^j \;:=\; P_k^\top \nabla f(x_k,\xi_k^j)\in\mathbb{R}^r,
\]
and the aggregator forms
\[
P_k^\top g_k \;=\; \frac{1}{\bar{B}}\sum_{j=1}^{\bar{B}} u_k^j.
\]
Thus, each worker transmits an $r$-dimensional vector per iteration (instead of $d$ dimensions),
and the server applies the update using the projected gradient
$P_kP_k^\top g_k \in \mathbb{R}^d$.
Flynn et al.\ analyze the convergence penalty introduced by compression and show that leveraging
matrix-smoothness leads to sharp convergence guarantees in expectation under the
bounded-variance assumption.

\subsection{Stochastic Optimization under Heavy-Tailed Noise}
Classical analyses have established in-expectation convergence for standard SGD 
under the bounded-variance assumption \cite{ghadimi2013}. More recently, 
high-probability convergence has been shown under the sub-Gaussian 
gradient noise assumption \cite{liu2023hp}. However, growing empirical and theoretical evidence indicates that stochastic gradients arising in modern 
large-scale optimization can exhibit heavy-tailed behaviour, which may fall outside the classical bounded-variance assumption. 
This phenomenon has been observed in a variety of settings, including deep neural network training
\cite{simsekli2019,nguyen2019},
motivating a growing body of theoretical work on optimization under heavy-tailed noise.
In particular, clipped SGD and its variants have been extensively analyzed in this context, leading to convergence guarantees both in expectation and with high probability; see, e.g., \cite{zhang2020clipped,nguyen2023}.
However, convergence analyses often rely on carefully tuned clipping thresholds and assume knowledge of problem-dependent parameters (e.g., noise moments or smoothness constants).   

\subsection{Normalized Gradient Methods}
Motivated by these limitations, there has been renewed interest in \emph{normalized} gradient methods, going back to Nesterov's normalized gradient schemes for stochastic optimization \cite{nesterov1984}.
\citet{huebler2025} study NSGD under heavy-tailed noise with bounded $p$-th moments, and establish both in-expectation and high-probability convergence guarantees in two regimes: (i) a fully parameter-free setting where the tail index $p$ is unknown, and (ii) a parameter-aware setting where $p$ is known.
Relatedly, \citet{liu2025clippingfree,sun2024} analyze momentum-based normalized methods under heavy-tailed noise and establish convergence guarantees in expectation; however, corresponding high-probability guarantees remain unavailable.
In fact, for momentum variants, \citet{huebler2025} empirically suggest that obtaining a clean high-probability convergence bound with a simple $\log(1/\delta)$ dependence may be challenging.

\subsection{Compression Meets Heavy Tails: Limitations of Existing Work}
Recently, the interaction between gradient compression and heavy-tailed noise has attracted attention.  
\citet{chen2024safl_v1} analyze clipped SGD with gradient compression under heavy-tailed noise.
However, their high-probability convergence guarantees apply only in the parameter-aware regime where the tail index $p$ is known.
Moreover, their guarantees require choosing the subspace dimension $r$ to satisfy a lower bound
that depends on $d$, $T$, and $\delta$. In other words, for a fixed $r$, the condition can be rearranged
to yield a restriction on the allowable horizon $T$, so the result does not directly provide an
$\epsilon$-dependent iteration complexity guarantee.

As another line of work, 
\citet{kornilov2025sign_v1} analyze sign-based stochastic
methods (SignSGD and its variants) under heavy-tailed noise. They derive high-probability
convergence bounds, but these bounds exhibit an unfavourable dependence on the dimension $d$.
In particular, measured in the Euclidean norm, their bounds include terms of the form $(\sqrt{d}/\epsilon)^{(3p-2)/(p-1)}$ when $p$ is known and $(\sqrt{d}/\epsilon)^{2p/(p-1)}$ when $p$ is unknown; thus, the dependence on $d$ can become significantly worse for small $p$.

\subsection{Randomized Subspaces in Modern Large-Scale Training}
\label{app:llm-subspace}
We cite the following large-scale training works only as context: although they also reduce the update dimension,
their primary focus and algorithmic setting differ from ours.
In a related but distinct line of work, low-dimensional update ideas have recently gained visibility in large-scale
training---especially in LLM optimization---as practical tools for improving memory and/or communication efficiency.
Much of this literature is experiment-driven, but several works also develop stochastic convergence analyses \citep{chen2025greedylore_v4,chen2025rso_llm_v1,wen2025sron,he2025subspace_llm}.
Existing analyses in this literature often focus on in-expectation guarantees under standard assumptions such as bounded variance;
high-probability bounds and heavy-tailed noise are typically not their primary focus. Rather, a central motivation in this line is
system-level efficiency in large-scale training, whereas our motivation is theory-first convergence analysis.

\section{Missing Proofs for RS-SGD}
\label{app:rsgd-hp}

\begin{definition}\label{app:def:norm-subg}
Let $X$ be an $\mathbb{R}^d$-valued random vector. We say that $\|X\|$ is $\sigma$-sub-Gaussian if
\begin{align*}
\forall |\lambda| \le \frac{1}{\sigma}, \qquad
\mathbb{E}\!\left[\exp\!\left(\lambda^2\|X\|^2\right)\right]
\le \exp\!\left(\lambda^2\sigma^2\right).
\end{align*}
\end{definition}

We study how the parameter scales under mini-batching.

\begin{lemma}\label{lem:norm-to-mgf}
Assume $\mathbb{E}[X]=0$ and $\|X\|$ is $\sigma$-sub-Gaussian in the sense of \Cref{app:def:norm-subg}.
Then for all $u\in\mathbb{R}^d$,
\begin{align*}
\mathbb{E}\big[\exp(\langle u,X\rangle)\big]
\le \exp\!\left(\frac{e+1}{2}\,\|u\|^2\sigma^2\right).
\end{align*}
\end{lemma}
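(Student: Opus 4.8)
The plan is to reduce the vector statement to a one-dimensional sub-Gaussian moment-generating-function (MGF) estimate for the scalar $Y := \langle u, X\rangle$, and then to treat two ranges of $\|u\|$ separately. First I would record that $\mathbb{E}[Y] = \langle u, \mathbb{E}[X]\rangle = 0$ and, by Cauchy--Schwarz, $Y^2 \le \|u\|^2\|X\|^2$ pointwise. Consequently, writing $s := \|u\|^2\sigma^2$, for every $\lambda$ with $|\lambda| \le 1/\sqrt{s}$ (equivalently $|\lambda|\,\|u\| \le 1/\sigma$) the hypothesis on $\|X\|$ gives $\mathbb{E}[\exp(\lambda^2 Y^2)] \le \mathbb{E}[\exp(\lambda^2\|u\|^2\|X\|^2)] \le \exp(\lambda^2\|u\|^2\sigma^2) = \exp(\lambda^2 s)$. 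Thus $Y$ is a mean-zero scalar satisfying the same square-MGF condition as in \Cref{app:def:norm-subg} with parameter $\sqrt{s}$, and it suffices to prove $\mathbb{E}[e^Y] \le \exp(\tfrac{e+1}{2}\,s)$. The case $u=0$ (i.e.\ $s=0$) is trivial, so I assume $s>0$.

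For the regime $s \le 1$ (small $\|u\|$) I would use the elementary inequality $e^y \le y + e^{y^2}$, valid for all $y\in\mathbb{R}$. Taking expectations and using $\mathbb{E}[Y]=0$ yields $\mathbb{E}[e^Y] \le \mathbb{E}[e^{Y^2}]$, and then the scalar condition with $\lambda=1$ (admissible precisely because $s \le 1$) gives $\mathbb{E}[e^{Y^2}] \le e^{s} \le \exp(\tfrac{e+1}{2}s)$, since $\tfrac{e+1}{2} \ge 1$.

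The harder regime is $s>1$ (large $\|u\|$), where $\mathbb{E}[e^{cY^2}]$ is no longer controlled by the hypothesis for the coefficients $c$ one would like (indeed it may be infinite), so the previous step breaks down. Here I would trade the linear term for a controllable quadratic one via Young's inequality, $y \le \tfrac{1}{s}\,y^2 + \tfrac{s}{4}$ for all $y$. Exponentiating and using $Y^2 \le \|u\|^2\|X\|^2$ gives $\mathbb{E}[e^Y] \le e^{s/4}\,\mathbb{E}[\exp(Y^2/s)]$, and applying the scalar condition at the boundary value $\lambda = 1/\sqrt{s}$ gives $\mathbb{E}[\exp(Y^2/s)] \le e$. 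Hence $\mathbb{E}[e^Y] \le e^{1+s/4}$, and since $s>1$ we have $1 + s/4 \le \tfrac{5}{4}s \le \tfrac{e+1}{2}s$, which closes this case.

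Combining the two regimes yields the claim. The main obstacle is exactly the large-$\|u\|$ case: the hypothesis only controls $\mathbb{E}[\exp(\lambda^2\|X\|^2)]$ for $|\lambda|\le 1/\sigma$, so one cannot directly exponentiate the quadratic majorant of $Y$, and Young's inequality is what rescues the argument by converting the offending linear term into a square whose coefficient sits exactly at the admissible threshold, at the cost of an additive $s/4$. This is the structural reason the split into two regimes appears and why the resulting constant exceeds the Gaussian-ideal $\tfrac12$. Since any constant at most $\tfrac{e+1}{2}$ establishes the stated bound, no optimization of the split is required.
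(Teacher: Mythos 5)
Your proof is correct, but it follows a genuinely different route from the paper's. The paper gives a single unified argument (adapted from Proposition~2.6.1 of Vershynin): it starts from the pointwise bound $e^x \le 1 + x + \tfrac{x^2}{2}e^{|x|}$, controls the factor $e^{\|u\|\|X\|}$ via Young's inequality $\|u\|\|X\| \le \tfrac{\sigma^2\|u\|^2}{2} + \tfrac{\|X\|^2}{2\sigma^2}$, absorbs the polynomial factor via $t^2 \le e^{t^2/2}$, and then takes expectations using $\mathbb{E}[\exp(\|X\|^2/\sigma^2)]\le e$; the constant $\tfrac{e+1}{2}=\tfrac12+\tfrac{e}{2}$ emerges from the final manipulations. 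You instead reduce to the scalar variable $Y=\langle u,X\rangle$ with proxy $\sqrt{s}=\|u\|\sigma$ and run a two-regime analysis in the style of Vershynin's Proposition~2.5.2: for $s\le 1$ the inequality $e^y\le y+e^{y^2}$ together with the hypothesis at $\lambda=1$ gives $\mathbb{E}[e^Y]\le e^{s}$, and for $s>1$ Young's inequality $y\le y^2/s + s/4$ together with the hypothesis at the boundary $\lambda=1/\sqrt{s}$ gives $\mathbb{E}[e^Y]\le e^{1+s/4}\le e^{5s/4}$; all steps check out (the scalar reduction $Y^2\le\|u\|^2\|X\|^2$ and the admissibility conditions $|\lambda|\|u\|\le 1/\sigma$ are handled correctly, and $e^y\le y+e^{y^2}$ is indeed valid for all real $y$). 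What the paper's approach buys is a case-free derivation; what yours buys is a sharper constant — your argument actually establishes the bound with $\max\{1,\tfrac54\}=\tfrac54$ in place of $\tfrac{e+1}{2}\approx 1.86$ — and a cleaner structural explanation of why the restriction $|\lambda|\le 1/\sigma$ in the hypothesis forces either a case split or the paper's exponential-absorption trick. Either proof suffices for how the lemma is used downstream (\Cref{prop:minibatch}), since only the form $\exp(C\|u\|^2\sigma^2)$ matters there.
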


\begin{proof}
We borrow ideas from the proof of Proposition~2.6.1 in \cite{vershynin2025hdp}
for the one-dimensional case, adapting them to our setting involving $\|X\|$.

Fix $u\in\mathbb{R}^d$. We start with
\begin{align*}
e^{\langle u,X\rangle}
\overset{(a)}{\le}\;
1+\langle u,X\rangle + \frac{\langle u,X\rangle^2}{2}e^{|\langle u,X\rangle|}
\overset{(b)}{\le}\;
1+\langle u,X\rangle + \frac{\|u\|^2\|X\|^2}{2}e^{\|u\|\|X\|} \refstepcounter{equation}\tag{\theequation}\label{eq:exp-linear}.
\end{align*}
Here (a) is the inequality $e^x \le 1 + x + \frac{x^2}{2}e^{|x|}$ for all $x\in\mathbb{R}$,
and (b) uses $|\langle u,X\rangle|\le \|u\|\,\|X\|$ and $\langle u,X\rangle^2\le \|u\|^2\|X\|^2$.

Next,
\begin{align*}
e^{\|u\|\|X\|}
\overset{(a)}{\le}\;
\exp\!\left(\frac{\sigma^2\|u\|^2}{2}+\frac{\|X\|^2}{2\sigma^2}\right) \refstepcounter{equation}\tag{\theequation}\label{eq:young},
\end{align*}
where (a) follows from $ab \le \frac{\sigma^2 a^2}{2}+\frac{b^2}{2\sigma^2}$ applied to
$a=\|u\|$ and $b=\|X\|$.

From \eqref{eq:exp-linear} and \eqref{eq:young}, we obtain
\begin{align*}
e^{\langle u,X\rangle}
&\le 1+\langle u,X\rangle
+ \frac{\|u\|^2\|X\|^2}{2}\exp\!\left(\frac{\sigma^2\|u\|^2}{2}+\frac{\|X\|^2}{2\sigma^2}\right) \\
&= 1+\langle u,X\rangle
+ \frac{\|u\|^2\sigma^2}{2}e^{\frac{\sigma^2\|u\|^2}{2}}
\cdot \frac{\|X\|^2}{\sigma^2}e^{\frac{\|X\|^2}{2\sigma^2}} \\
&\overset{(a)}{\le}
1+\langle u,X\rangle
+ \frac{\|u\|^2\sigma^2}{2}e^{\frac{\sigma^2\|u\|^2}{2}}
\exp\!\left(\frac{\|X\|^2}{\sigma^2}\right).
\end{align*}
Here (a) uses $\frac{\|X\|^2}{\sigma^2}\le \exp\!\left(\frac{\|X\|^2}{2\sigma^2}\right)$,
which follows from $t^2\le e^{t^2/2}$ applied to $t=\|X\|/\sigma$.

Taking expectations and using $\mathbb{E}[X]=0$ yields
\begin{align*}
\mathbb{E}\big[e^{\langle u,X\rangle}\big]
&\le 1 + \frac{\|u\|^2\sigma^2}{2}e^{\frac{\sigma^2\|u\|^2}{2}}
\mathbb{E}\!\left[\exp\!\left(\frac{\|X\|^2}{\sigma^2}\right)\right].
\end{align*}
By \Cref{app:def:norm-subg} with $\lambda = 1/\sigma$,
$\mathbb{E}[\exp(\|X\|^2/\sigma^2)]\le e$. 
Therefore,
\begin{align*}
\mathbb{E}\big[e^{\langle u,X\rangle}\big]
&\le 1 + \frac{e}{2}\|u\|^2\sigma^2\,e^{\frac{\sigma^2\|u\|^2}{2}}
\overset{(a)}{\le}
e^{\frac{\sigma^2\|u\|^2}{2}}
\left(1+\frac{e}{2}\|u\|^2\sigma^2\right)
\overset{(b)}{\le}
\exp\!\left(\frac{1}{2}\sigma^2\|u\|^2+\frac{e}{2}\sigma^2\|u\|^2\right).
\end{align*}
Here (a) uses that $e^{\frac{\sigma^2\|u\|^2}{2}}\ge 1$, hence
$1+ae^{b}\le e^{b}(1+a)$ with $a=\frac{e}{2}\|u\|^2\sigma^2$, $b=\frac{\sigma^2\|u\|^2}{2}$.
Here (b) uses $1+y\le e^y$ with $y=\frac{e}{2}\|u\|^2\sigma^2$.

\end{proof}

\begin{lemma}\label{lem:mgf-to-norm}
Assume $\mathbb{E}[X]=0$ and there exists $\sigma'>0$ such that
\begin{align*}
\forall u\in\mathbb{R}^d,\qquad
\mathbb{E}\big[\exp(\langle u,X\rangle)\big] \le \exp(\|u\|^2\sigma'^2).
\end{align*}
Then with $\bar{\sigma}:=24\sqrt{e}\,\sigma'$, we have
\begin{align*}
\forall |\lambda|\le \frac{1}{\bar{\sigma}},\qquad
\mathbb{E}\big[\exp(\lambda^2\|X\|^2)\big]
\le 5^d \exp(\lambda^2\bar{\sigma}^2).
\end{align*}
\end{lemma}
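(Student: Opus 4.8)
The plan is to pass from the one-dimensional (directional) control supplied by the hypothesis to control of the full norm $\|X\|=\sup_{\|v\|=1}\langle v,X\rangle$ through a covering argument on the unit sphere, which is precisely where the dimensional factor $5^d$ will enter. First I would fix a $\tfrac12$-net $\mathcal{N}$ of the Euclidean sphere $S^{d-1}$; the standard volumetric bound gives $|\mathcal{N}|\le(1+2/(1/2))^d=5^d$. The usual net inequality for linear functionals then yields $\|X\|\le 2\max_{u\in\mathcal{N}}\langle u,X\rangle$, and since $\|X\|\ge 0$ forces the maximizing inner product to be nonnegative, squaring is harmless and gives $\|X\|^2\le 4\max_{u\in\mathcal{N}}\langle u,X\rangle^2$.

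Monotonicity of $\exp$ converts this into $\exp(\lambda^2\|X\|^2)\le\max_{u\in\mathcal{N}}\exp(4\lambda^2\langle u,X\rangle^2)\le\sum_{u\in\mathcal{N}}\exp(4\lambda^2\langle u,X\rangle^2)$, so after taking expectations it suffices to bound, for a single fixed unit vector $u$, the scalar quantity $\mathbb{E}[\exp(4\lambda^2 Y^2)]$ with $Y:=\langle u,X\rangle$. Applying the hypothesis to $su$ with $\|u\|=1$ shows that $Y$ is a scalar satisfying $\mathbb{E}[\exp(sY)]\le\exp(s^2\sigma'^2)$ for every $s\in\mathbb{R}$, i.e.\ the problem reduces to turning a \emph{linear} moment-generating bound into an exponential-of-square bound.

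For that conversion I would use the Gaussian-integral (Hubbard--Stratonovich) identity $\exp(\theta Y^2)=\mathbb{E}_{g\sim N(0,1)}[\exp(\sqrt{2\theta}\,gY)]$, valid for $\theta>0$. Swapping the two expectations (Tonelli, positive integrand) and applying the directional bound with $s=\sqrt{2\theta}\,g$ gives $\mathbb{E}[\exp(\theta Y^2)]\le\mathbb{E}_g[\exp(2\theta\sigma'^2 g^2)]=(1-4\theta\sigma'^2)^{-1/2}$ whenever $\theta<1/(4\sigma'^2)$. Taking $\theta=4\lambda^2$ and using $|\lambda|\le 1/\bar{\sigma}$ with $\bar{\sigma}=24\sqrt{e}\,\sigma'$ keeps $16\lambda^2\sigma'^2$ comfortably below $1$, so the elementary inequality $(1-x)^{-1/2}\le e^{x}$ (valid on this range) yields $\mathbb{E}[\exp(4\lambda^2 Y^2)]\le\exp(16\lambda^2\sigma'^2)\le\exp(\lambda^2\bar{\sigma}^2)$, the last step because $16\le 576e=\bar{\sigma}^2/\sigma'^2$. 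Summing the at most $5^d$ net terms then delivers exactly $\mathbb{E}[\exp(\lambda^2\|X\|^2)]\le 5^d\exp(\lambda^2\bar{\sigma}^2)$.

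The main obstacle, and the only place any real care is needed, is the bookkeeping that ties the three constants together: the net radius (which simultaneously fixes the factor $4$ and the base $5$), the admissible range of $\theta=4\lambda^2$ for the Gaussian integral to converge, and the verification that the chosen $\bar{\sigma}$ both keeps $\lambda$ inside that range and is large enough to absorb the per-direction factor into $\exp(\lambda^2\bar{\sigma}^2)$. I expect the stated constant $24\sqrt{e}$ to be very loose for this route, so the inequalities should close with substantial slack; an essentially equivalent alternative is to pass instead through the sub-Gaussian tail bound $\mathbb{P}(|Y|>t)\le 2\exp(-t^2/(4\sigma'^2))$ and integrate $2ct\,e^{ct^2}$ against it, which reaches the same conclusion at the cost of messier constants.
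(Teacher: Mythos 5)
Your proof is correct, and it shares the paper's outer skeleton—the $\tfrac12$-net of cardinality $5^d$, the reduction $\|X\|^2\le 4\max_{u\in\mathcal{N}}\langle u,X\rangle^2$, and the union bound $\exp(\max_i a_i)\le\sum_i\exp(a_i)$—but it replaces the technical heart of the argument with a genuinely different device. The paper converts the directional MGF bound into a quadratic-exponential bound the classical way (Steps 1--3): Chernoff gives the tail bound $\mathbb{P}(|\langle u,X\rangle|\ge t)\le 2\exp(-t^2/(4\sigma'^2\|u\|^2))$, integration of tails gives moment growth $(\mathbb{E}[|\langle u,X\rangle|^p])^{1/p}\le 6\sigma'\|u\|\sqrt p$, and a Taylor expansion of $\exp(\lambda^2\langle u,X\rangle^2)$ summed as a geometric series gives the per-direction bound, with Gamma-function and $p!\ge (p/e)^p$ bookkeeping along the way. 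You instead use the Hubbard--Stratonovich identity $\exp(\theta Y^2)=\mathbb{E}_{g\sim N(0,1)}[\exp(\sqrt{2\theta}\,gY)]$, swap expectations by Tonelli, apply the hypothesis at $s=\sqrt{2\theta}\,g$, and evaluate the chi-square MGF to get $\mathbb{E}[\exp(\theta Y^2)]\le(1-4\theta\sigma'^2)^{-1/2}$ for $\theta<1/(4\sigma'^2)$, finishing with $(1-x)^{-1/2}\le e^x$ (valid on $[0,\tfrac12]$, and your $x=16\lambda^2\sigma'^2\le 1/(36e)$ is well inside). All the constant checks close: $\theta=4\lambda^2$ satisfies the convergence condition with large slack under $|\lambda|\le 1/(24\sqrt e\,\sigma')$, and $16\le 576e$ absorbs the per-direction factor into $\exp(\lambda^2\bar\sigma^2)$. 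What your route buys is brevity and transparency—three lemma-steps of moment estimates collapse into one Gaussian linearization—and it also substantiates your closing remark: your argument would go through with $\bar\sigma\approx 6\sigma'$, showing the paper's $24\sqrt e\,\sigma'$ is far from tight (the paper pays this constant because its moment-to-MGF chain is lossier). What the paper's route buys is self-containedness in the style of the standard sub-Gaussian equivalences (Vershynin's Proposition 2.6.1), which the authors also reuse structurally in the companion \Cref{lem:norm-to-mgf}; your Gaussian-integral trick is equally elementary but sits outside that template.
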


\begin{proof}
We follow the one-dimensional argument underlying Proposition~2.6.1 in Vershynin,
and adapt it to our setting. We then combine this with a covering argument
in $\mathbb{R}^d$ to control $\|X\|$.

\paragraph{Step 1}
Fix $u\in\mathbb{R}^d\setminus\{0\}$ and $t>0$. For any $\lambda>0$,
\begin{align}
\mathbb{P}(\langle u,X\rangle \ge t)
&= \mathbb{P}\big(e^{\lambda\langle u,X\rangle}\ge e^{\lambda t}\big)
\overset{(a)}{\le}
e^{-\lambda t}\,\mathbb{E}\!\left[e^{\lambda\langle u,X\rangle}\right]
\overset{(b)}{\le}
\exp\!\left(-\lambda t+\lambda^2\sigma'^2\|u\|^2\right).
\label{eq:step1-markov}
\end{align}
Here (a) is Markov's inequality, and (b) uses the assumed bound
$\mathbb{E}[\exp(\langle v,X\rangle)]\le \exp(\sigma'^2\|v\|^2)$ with $v=\lambda u$.

Optimizing the right-hand side of \eqref{eq:step1-markov} over $\lambda$ gives
\begin{align}
\mathbb{P}(\langle u,X\rangle \ge t)
\le
\exp\!\left(-\frac{t^2}{4\sigma'^2\|u\|^2}\right).
\label{eq:step1-upper}
\end{align}

Applying \eqref{eq:step1-upper} with $u$ and with $-u$ yields, for all $t\ge 0$,
\begin{align}
\mathbb{P}\big(|\langle u,X\rangle|\ge t\big)
\le
2\exp\!\left(-\frac{t^2}{4\sigma'^2\|u\|^2}\right).
\label{eq:proj-tail}
\end{align}

\paragraph{Step 2}
For $p\ge 1$, we use the identity
\begin{align}
\mathbb{E}[|Y|^p]
=
\int_0^\infty \mathbb{P}(|Y|\ge t)\,p t^{p-1}\,dt ,
\label{eq:moment-identity}
\end{align}
and apply it with $Y=\langle u,X\rangle$. Substituting \eqref{eq:proj-tail} into
\eqref{eq:moment-identity} gives
\begin{align}
\mathbb{E}[|\langle u,X\rangle|^p]
&\overset{(a)}{\le}
\int_0^\infty
2\exp\!\left(-\frac{t^2}{4\sigma'^2\|u\|^2}\right)\,p t^{p-1}\,dt
\notag\\
&=
p\Gamma\!\left(\frac{p}{2}\right)(2\sigma'\|u\|)^p.
\label{eq:step2-gamma}
\end{align}
Here (a) uses the tail bound \eqref{eq:proj-tail}.

Using $\Gamma(x)\le 3x^x$ for $x\ge 1/2$, we obtain
\begin{align}
\mathbb{E}[|\langle u,X\rangle|^p]
\le
3p\left(\frac{p}{2}\right)^{p/2}(2\sigma'\|u\|)^p .
\label{eq:step2-momentbound}
\end{align}

Taking $p$-th roots in \eqref{eq:step2-momentbound} and using $(3p)^{1/p}\le 3$ for $p\ge1$ yields
\begin{align}
\forall p\ge1,\qquad
\big(\mathbb{E}[|\langle u,X\rangle|^p]\big)^{1/p}
\le 6\sigma'\|u\|\sqrt{p}.
\label{eq:moment-growth}
\end{align}

\paragraph{Step 3}
Let $\hat{\sigma}:=6\sigma'$. By \eqref{eq:moment-growth},
\begin{align}
\mathbb{E}\big[(\langle u,X\rangle)^{2p}\big]
\overset{(a)}{\le}
\big(\hat{\sigma}\|u\|\sqrt{2p}\big)^{2p}
=
\big(\hat{\sigma}^2\|u\|^2\cdot 2p\big)^p .
\label{eq:step3-evenmom}
\end{align}
Here (a) is \eqref{eq:moment-growth} applied with exponent $2p$.

Therefore,
\begin{align}
\mathbb{E}\big[e^{\lambda^2(\langle u,X\rangle)^2}\big]
&=
1+\sum_{p=1}^\infty
\frac{\lambda^{2p}\,\mathbb{E}[(\langle u,X\rangle)^{2p}]}{p!}
\notag\\
&\overset{(a)}{\le}
1+\sum_{p=1}^\infty
\frac{\lambda^{2p}\,(\hat{\sigma}^2\|u\|^2\cdot 2p)^p}{p!}
\overset{(b)}{\le}
1+\sum_{p=1}^\infty
\big(\lambda^2\cdot 2\hat{\sigma}^2\|u\|^2 e\big)^p .
\label{eq:step3-series}
\end{align}
Here (a) uses \eqref{eq:step3-evenmom}, and (b) uses $p!\ge (p/e)^p$.

Now restrict to $u\in B_d:=\{u:\|u\|\le 1\}$ and define $\sigma:=12\sqrt{e}\,\sigma'$.
Then $2\hat{\sigma}^2 e=\sigma^2/2$, and \eqref{eq:step3-series} gives, for $\|u\|\le 1$,
\begin{align}
\mathbb{E}\big[e^{\lambda^2(\langle u,X\rangle)^2}\big]
\le
1+\sum_{p=1}^\infty \left(\frac{\lambda^2\sigma^2}{2}\right)^p
=
\frac{1}{1-\lambda^2\sigma^2/2}.
\label{eq:step3-geom}
\end{align}

For $|\lambda|\le 1/\sigma$ we have $\lambda^2\sigma^2/2\le 1/2$, hence
\begin{align}
\frac{1}{1-\lambda^2\sigma^2/2}
\overset{(a)}{\le}
\exp(\lambda^2\sigma^2).
\label{eq:step3-final}
\end{align}
Here (a) uses $\frac{1}{1-x}\le e^{2x}$ for $x\in[0,1/2]$ with $x=\lambda^2\sigma^2/2$.

Combining \eqref{eq:step3-geom} and \eqref{eq:step3-final} yields
\begin{align}
\forall u\in B_d,\ \forall |\lambda|\le \frac{1}{\sigma},\qquad
\mathbb{E}\big[e^{\lambda^2(\langle u,X\rangle)^2}\big]\le e^{\lambda^2\sigma^2}.
\label{eq:proj-quad-mgf}
\end{align}

\paragraph{Step 4}
Let $\mathcal{N}_{1/2}$ be a $(1/2)$-net of $B_d$ in Euclidean norm. Then
\begin{align}
|\mathcal{N}_{1/2}|
\overset{(a)}{\le}
\left(1+\frac{2}{1/2}\right)^d
=
5^d .
\label{eq:net-card}
\end{align}
(a)\; standard volumetric bound for Euclidean nets.

Moreover, since $\|X\|=\max_{v\in B_d} \langle v,X\rangle$, the net property implies
\begin{align}
\|X\|
\overset{(a)}{\le}
2\max_{z\in \mathcal{N}_{1/2}} \langle z,X\rangle,
\qquad\text{and hence}\qquad
\|X\|^2
\overset{(b)}{\le}
4\max_{z\in \mathcal{N}_{1/2}} \langle z,X\rangle^2 .
\label{eq:norm-by-net}
\end{align}
(a)\; approximation of the supremum over $B_d$ by a $(1/2)$-net,
(b)\; squaring both sides.

Therefore, for $|\lambda|\le 1/(2\sigma)$,
\begin{align}
\mathbb{E}\big[e^{\lambda^2\|X\|^2}\big]
\overset{(a)}{\le}
\mathbb{E}\Big[e^{4\lambda^2 \max_{z\in\mathcal{N}_{1/2}}\langle z,X\rangle^2}\Big]
\overset{(b)}{\le}
\sum_{z\in\mathcal{N}_{1/2}}
\mathbb{E}\big[e^{4\lambda^2 \langle z,X\rangle^2}\big].
\label{eq:step4-union}
\end{align}
(a)\; using \eqref{eq:norm-by-net},
(b)\; using $e^{\max_i a_i}\le \sum_i e^{a_i}$.

For each $z\in\mathcal{N}_{1/2}\subset B_d$, applying \eqref{eq:proj-quad-mgf}
with parameter $2\lambda$ yields
\begin{align}
\sum_{z\in\mathcal{N}_{1/2}}
\mathbb{E}\big[e^{4\lambda^2 \langle z,X\rangle^2}\big]
\overset{(a)}{\le}
\sum_{z\in\mathcal{N}_{1/2}} e^{4\lambda^2\sigma^2}
=
|\mathcal{N}_{1/2}|\,e^{4\lambda^2\sigma^2}
\overset{(b)}{\le}
5^d e^{4\lambda^2\sigma^2}.
\label{eq:step4-sum}
\end{align}
(a)\; applying \eqref{eq:proj-quad-mgf} with $u=z$,
(b)\; using \eqref{eq:net-card}.

Finally, define $\bar{\sigma}:=2\sigma=24\sqrt{e}\,\sigma'$. Then for $|\lambda|\le 1/\bar{\sigma}$
we have $|\lambda|\le 1/(2\sigma)$ and $4\lambda^2\sigma^2=\lambda^2\bar{\sigma}^2$, so
\begin{align}
\mathbb{E}\big[e^{\lambda^2\|X\|^2}\big]
\le
5^d\,e^{\lambda^2\bar{\sigma}^2}.
\label{eq:step4-final}
\end{align}

\end{proof}

\begin{proposition}[Generalization of \Cref{lem:subgauss_minibatch}]\label{prop:minibatch}
Let $X_1,\dots,X_n$ be independent $\mathbb{R}^d$-valued random vectors such that
$\mathbb{E}[X_i]=0$ and $\|X_i\|$ is $\sigma$-sub-Gaussian for all $i$.
Let $\bar{X} := \frac{1}{n}\sum_{i=1}^n X_i$.
Define
\[
\bar{\sigma}
:= 24\sqrt{\frac{e(e+1)}{2}}\cdot \frac{\sigma}{\sqrt{n}}.
\]
Then for all $|\lambda|\le 1/\bar{\sigma}$,
\begin{align*}
\mathbb{E}\big[e^{\lambda^2\|\bar{X}\|^2}\big]
\le 5^d\, \exp(\lambda^2\bar{\sigma}^2).
\end{align*}
\end{proposition}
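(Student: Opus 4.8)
The plan is to route through the linear moment generating function (MGF), treating the two preceding lemmas as a matched pair: \Cref{lem:norm-to-mgf} converts a sub-Gaussian \emph{norm} into a \emph{linear}-MGF bound, and \Cref{lem:mgf-to-norm} converts a linear-MGF bound back into a quadratic sub-Gaussian-norm bound. The conceptual point is that the quadratic object $\mathbb{E}[e^{\lambda^2\|\bar{X}\|^2}]$ does not tensorize over independent summands, whereas the linear MGF $\mathbb{E}[e^{\langle u,\bar{X}\rangle}]$ factorizes immediately. Hence the averaging gain of order $1/\sqrt{n}$ is most naturally harvested in the linear-MGF world and then transported back to the norm.

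First I would apply \Cref{lem:norm-to-mgf} to each $X_i$. Since $\mathbb{E}[X_i]=0$ and $\|X_i\|$ is $\sigma$-sub-Gaussian, this gives, for every $u\in\mathbb{R}^d$,
\[
\mathbb{E}\big[e^{\langle u,X_i\rangle}\big] \le \exp\!\left(\tfrac{e+1}{2}\,\|u\|^2\sigma^2\right).
\]
Next I would tensorize. Using independence of the $X_i$ together with $\langle u,\bar{X}\rangle=\frac{1}{n}\sum_{i=1}^n\langle u,X_i\rangle$,
\[
\mathbb{E}\big[e^{\langle u,\bar{X}\rangle}\big]
=\prod_{i=1}^n \mathbb{E}\big[e^{\langle u/n,\,X_i\rangle}\big]
\le \prod_{i=1}^n \exp\!\left(\tfrac{e+1}{2}\,\tfrac{\|u\|^2}{n^2}\sigma^2\right)
=\exp\!\left(\tfrac{e+1}{2n}\,\|u\|^2\sigma^2\right).
\]
This is exactly the hypothesis of \Cref{lem:mgf-to-norm} for the mean-zero vector $\bar{X}$ with $\sigma'^2=\frac{(e+1)\sigma^2}{2n}$, that is, $\sigma'=\sqrt{\tfrac{e+1}{2}}\,\tfrac{\sigma}{\sqrt{n}}$.

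Finally I would invoke \Cref{lem:mgf-to-norm} with this $\sigma'$, which yields the claimed bound with prefactor $5^d$ inherited verbatim and with $\bar{\sigma}=24\sqrt{e}\,\sigma'=24\sqrt{\tfrac{e(e+1)}{2}}\,\tfrac{\sigma}{\sqrt{n}}$, matching the definition in the statement. The only real bookkeeping is checking that $24\sqrt{e}\,\sigma'$ collapses to the stated $\bar{\sigma}$; since the $(e+1)/2$ factor from the first lemma and the $24\sqrt{e}$ factor from the second combine cleanly, this is a direct substitution. I therefore do not anticipate a genuine obstacle beyond tracking constants, and \Cref{lem:subgauss_minibatch} follows as the special case in which the $X_i=\nabla f(x,\xi^i)-\nabla F(x)$ are i.i.d.\ with a common $\sigma$-sub-Gaussian norm under \Cref{ass:unbiased,ass:subgauss}.
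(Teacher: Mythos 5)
Your proposal is correct and follows exactly the same route as the paper's proof: apply \Cref{lem:norm-to-mgf} to each $X_i$, tensorize the linear MGF over independent summands to get $\sigma'=\sqrt{\tfrac{e+1}{2n}}\,\sigma$, then invoke \Cref{lem:mgf-to-norm} to recover the quadratic bound with $\bar{\sigma}=24\sqrt{e}\,\sigma'$. The constant bookkeeping matches the paper verbatim, so there is nothing to add.
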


\begin{proof}
By \Cref{lem:norm-to-mgf}, for each $i$ and all $u\in\mathbb{R}^d$,
\begin{align}
\mathbb{E}\big[\exp(\langle u,X_i\rangle)\big]
\le \exp\!\left(\frac{e+1}{2}\sigma^2\|u\|^2\right).
\label{eq:Xi-mgf}
\end{align}

Using independence, we obtain
\begin{align}
\mathbb{E}\big[\exp(\langle u,\bar{X}\rangle)\big]
&=
\mathbb{E}\Big[\exp\Big(\frac{1}{n}\sum_{i=1}^n \langle u,X_i\rangle\Big)\Big] \notag\\
&=
\prod_{i=1}^n
\mathbb{E}\Big[\exp\Big(\frac{1}{n}\langle u,X_i\rangle\Big)\Big] \notag\\
&\overset{(a)}{\le}
\prod_{i=1}^n
\exp\!\left(\frac{1}{n^2}\cdot\frac{e+1}{2}\sigma^2\|u\|^2\right)
=
\exp\!\left(\frac{e+1}{2n}\sigma^2\|u\|^2\right).
\label{eq:barX-mgf}
\end{align}
(a)\; applying \eqref{eq:Xi-mgf} with $u/n$.

Thus, the assumption of \Cref{lem:mgf-to-norm} holds for $\bar{X}$ with
\[
\sigma'=\sqrt{\frac{e+1}{2n}}\;\sigma .
\]
Applying \Cref{lem:mgf-to-norm} yields
\[
\bar{\sigma}
=
24\sqrt{e}\,\sigma'
=
24\sqrt{\frac{e(e+1)}{2}}\;\frac{\sigma}{\sqrt{n}},
\]
which completes the proof.

\end{proof}

\begin{lemma}\label{lem:martingale-cher}
Assume $F$ is $L$-smooth and bounded below by $F^\ast$.
Assume $\mathbb{E}[\nabla f(x,\xi)]=\nabla F(x)$ and that
$\|\nabla f(x,\xi)-\nabla F(x)\|$ is $\sigma$-sub-Gaussian (in the sense of \Cref{app:def:norm-subg}).
Let $P_k$ be i.i.d.\ Haar random matrices, and let $\{x_k\}_{k\ge 0}$ be generated by the update rule
\[
x_{k+1}=x_k-\bar{\eta} P_kP_k^\top g_k,
\]
where $g_k$ denotes a mini-batch stochastic gradient estimator at $x_k$.
Define the events
\[
A_k := \Big\{\|P_k^\top \nabla F(x_k)\|^2 \ge \tfrac{1}{2}\|\nabla F(x_k)\|^2\Big\},
\qquad
X:=\sum_{j=0}^{T-1}\mathbf{1}_{A_j}.
\]
Let
\[
\mu \coloneqq
\begin{cases}
1 - I_{\frac{r}{2d}}\!\left(\frac{r}{2}, \frac{d-r}{2}\right), & r<d,\\
1, & r=d,
\end{cases}
\]
where $I_{x}(a,b)$ is the regularized incomplete beta function.
Then for all $s\in(0,1)$,
\[
\mathbb{P}\big(X\le (1-s)\mu T\big)\le \exp\!\left(-\frac{s^2}{2}\mu T\right).
\]
\end{lemma}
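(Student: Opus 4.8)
The plan is to recognize $X$ as a sum of Bernoulli indicators whose conditional success probability given the past is at least $\mu$, and then apply a multiplicative Chernoff bound on the lower tail via a martingale MGF factorization. I would set $\mathcal{F}_k:=\sigma(P_0,\xi_0,\dots,P_{k-1},\xi_{k-1})$, so that $x_k$ is $\mathcal{F}_k$-measurable while $P_k$ is independent of $\mathcal{F}_k$. The first key step is to compute $\mathbb{P}(A_k\mid\mathcal{F}_k)$. Writing $v:=\nabla F(x_k)$, which is a fixed vector given $\mathcal{F}_k$, rotational invariance of the Haar distribution (property (i) of \Cref{sec:haar-prop}) shows that the law of $\|P_k^\top v\|^2/\|v\|^2$ is independent of the direction of $v$. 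Using $P_k^\top P_k=\tfrac{d}{r}I_r$, I would write $P_k=\sqrt{d/r}\,Q_k$ with $Q_k$ spanning a uniformly random $r$-dimensional subspace, so that $\|P_k^\top v\|^2/\|v\|^2=\tfrac{d}{r}Z_k$ where $Z_k\sim\mathrm{Beta}(\tfrac r2,\tfrac{d-r}{2})$ is the squared length of the projection of a fixed unit vector onto that subspace. Then $A_k=\{Z_k\ge r/(2d)\}$, and by the definition of the regularized incomplete beta function $\mathbb{P}(A_k\mid\mathcal{F}_k)=1-I_{r/(2d)}(\tfrac r2,\tfrac{d-r}{2})=\mu$ for $r<d$; for $r=d$ the matrix $P_k$ is orthogonal and $A_k$ holds surely, so $\mathbb{P}(A_k\mid\mathcal{F}_k)=1=\mu$. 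The degenerate case $\nabla F(x_k)=0$ makes $A_k$ hold surely as well, so in all cases $\mathbb{P}(A_k\mid\mathcal{F}_k)\ge\mu$.

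For the concentration bound, fix $\theta>0$ and apply Markov's inequality to $e^{-\theta X}$:
\[
\mathbb{P}\bigl(X\le(1-s)\mu T\bigr)\le e^{\theta(1-s)\mu T}\,\mathbb{E}\!\left[e^{-\theta X}\right].
\]
I would bound the MGF by peeling off indices via the tower property. Conditioning on $\mathcal{F}_{T-1}$,
\[
\mathbb{E}\!\left[e^{-\theta\mathbf{1}_{A_{T-1}}}\mid\mathcal{F}_{T-1}\right]
=1-\mathbb{P}(A_{T-1}\mid\mathcal{F}_{T-1})(1-e^{-\theta})
\le 1-\mu(1-e^{-\theta}),
\]
where the inequality uses $\mathbb{P}(A_{T-1}\mid\mathcal{F}_{T-1})\ge\mu$ and $1-e^{-\theta}>0$. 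Iterating over $k=T-1,\dots,0$ and applying $1+y\le e^{y}$ gives $\mathbb{E}[e^{-\theta X}]\le(1-\mu(1-e^{-\theta}))^T\le\exp(-\mu T(1-e^{-\theta}))$, whence
\[
\mathbb{P}\bigl(X\le(1-s)\mu T\bigr)\le\exp\!\left(\mu T\bigl[\theta(1-s)-(1-e^{-\theta})\bigr]\right).
\]

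It remains to optimize over $\theta$. The exponent is minimized at $\theta=\ln\tfrac{1}{1-s}$, where it equals $\mu T\bigl[-(1-s)\ln(1-s)-s\bigr]$, and I would finish by verifying the elementary inequality $-(1-s)\ln(1-s)-s\le-\tfrac{s^2}{2}$ on $(0,1)$: the function $\phi(s):=(1-s)\ln(1-s)+s-\tfrac{s^2}{2}$ satisfies $\phi(0)=\phi'(0)=0$ and $\phi''(s)=\tfrac{s}{1-s}\ge0$, hence $\phi\ge0$. This yields the claimed bound. I expect the main obstacle to be the first step: rigorously identifying the conditional law of $\|P_k^\top\nabla F(x_k)\|^2$ and its independence from the random, history-dependent gradient direction. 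The payoff is that the indicators $\mathbf{1}_{A_k}$ can be treated as independent $\mathrm{Bernoulli}(\mu)$ variables inside the MGF, even though they are genuinely dependent, reducing the statement to the classical binomial lower-tail Chernoff bound.
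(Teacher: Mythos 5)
Your proof is correct and follows essentially the same route as the paper: both reduce $\mathbb{P}(A_k\mid\mathcal{F}_k)=\mu$ (or $\ge\mu$ in the degenerate cases) to the Beta law of the projected squared norm via Haar rotational invariance, and then run a Chernoff argument with the same tilt $\theta=-\log(1-s)$ and the same elementary inequality $s+(1-s)\log(1-s)\ge s^2/2$. The only difference is bookkeeping: the paper centers the indicators and builds an explicit supermartingale $M_k$ with $\psi(\lambda)=e^{-\lambda}-1+\lambda$, whereas you peel the conditional MGF term by term using monotonicity in the conditional success probability; the two computations are equivalent and yield the identical bound.
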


\begin{proof}
Let $\mathcal{F}_k := \sigma(g_0,P_0,\dots,g_{k-1},P_{k-1})$.
We first consider the case 
$r<d$.
Define
\[
p_k:=\mathbb{P}(A_k\mid \mathcal{F}_k)
=\mathbb{E}[\mathbf{1}_{A_k}\mid \mathcal{F}_k].
\]
For $\|\nabla F(x_k)\|\neq 0$, the identity $p_k=\mu$ follows from rotational invariance;
this argument appears in the proof of Lemma~1 in \citet{kozak2021}.
If $\|\nabla F(x_k)\|=0$, then $p_k=1$.

Therefore,
\begin{align}
\sum_{k=0}^{T-1} p_k \ge \mu T.
\label{eq:sum-pk}
\end{align}

Let $\psi(\lambda):=e^{-\lambda}-1+\lambda$ and fix $\lambda>0$.
Then
\begin{align}
\mathbb{E}\!\left[e^{-\lambda(\mathbf{1}_{A_k}-p_k)}\mid \mathcal{F}_k\right]
&= p_k e^{-\lambda(1-p_k)} + (1-p_k)e^{\lambda p_k} \notag\\
&= e^{\lambda p_k}\big(p_k e^{-\lambda}+(1-p_k)\big) \notag\\
&\overset{(a)}{\le} e^{\lambda p_k}\exp\!\big(p_k(e^{-\lambda}-1)\big)
= \exp\!\big(\psi(\lambda)p_k\big).
\label{eq:cher-step}
\end{align}
(a)\; using $1+x\le e^x$ with $x=p_k(e^{-\lambda}-1)$.

Hence,
\begin{align}
\mathbb{E}\!\left[e^{-\lambda(\mathbf{1}_{A_k}-p_k)-\psi(\lambda)p_k}\mid \mathcal{F}_k\right]\le 1.
\label{eq:supermgf}
\end{align}
Define
\[
M_k := \exp\!\left(-\lambda\sum_{j=0}^k(\mathbf{1}_{A_j}-p_j)-\psi(\lambda)\sum_{j=0}^k p_j\right).
\]
By \eqref{eq:supermgf}, $\{M_k\}$ is a supermartingale, and thus
\begin{align}
\mathbb{E}[M_{T-1}]\le 1.
\label{eq:MT-1}
\end{align}

Fix $s\in(0,1)$. Using \eqref{eq:sum-pk}, for any $\lambda>0$,
\begin{align}
\mathbb{P}(X \le (1-s)\mu T)
&\le
\mathbb{P}\!\left(X \le (1-s)\sum_{k=0}^{T-1}p_k\right) \notag\\
&=
\mathbb{P}\!\left(e^{-\lambda X} \ge e^{-\lambda(1-s)\sum_{k=0}^{T-1}p_k}\right) \notag\\
&=
\mathbb{P}\!\left(e^{-\lambda\left(X-\sum_{k=0}^{T-1}p_k\right)}
\ge e^{\lambda s\sum_{k=0}^{T-1}p_k}\right) \notag\\
&=
\mathbb{P}\!\left(
e^{-\lambda\left(X-\sum_{k=0}^{T-1}p_k\right)-\psi(\lambda)\sum_{k=0}^{T-1}p_k}
\ge
e^{(\lambda s-\psi(\lambda))\sum_{k=0}^{T-1}p_k}
\right) \notag\\
&\overset{(a)}=
\mathbb{P}\!\left(
M_{T-1} \ge
\exp\!\left((\lambda s-\psi(\lambda))\sum_{k=0}^{T-1}p_k\right)
\right).
\label{eq:reduce-to-M}
\end{align}
Here, (a) follows from the definition of $M_k$.

Choose $\lambda=-\log(1-s)$, so that
\begin{align}
\lambda s-\psi(\lambda)
= s+(1-s)\log(1-s)
\overset{(a)}{\ge} \frac{s^2}{2}.
\label{eq:psi-lower}
\end{align}
(a)\; the standard inequality $s+(1-s)\log(1-s)\ge s^2/2$ for $s\in(0,1)$.

Combining \eqref{eq:reduce-to-M}, \eqref{eq:sum-pk}, and \eqref{eq:psi-lower} yields
\begin{align}
\mathbb{P}(X\le (1-s)\mu T)
&\le \mathbb{P}\!\left(M_{T-1}\ge e^{\frac{s^2}{2}\sum_{k=0}^{T-1}p_k}\right)
\le \mathbb{P}\!\left(M_{T-1}\ge e^{\frac{s^2}{2}\mu T}\right) \notag\\
&\overset{(a)}{\le} \mathbb{E}[M_{T-1}]\, e^{-\frac{s^2}{2}\mu T}
\overset{(b)}{\le} \exp\!\left(-\frac{s^2}{2}\mu T\right).
\label{eq:final-cher}
\end{align}
(a)\; Markov's inequality applied to $M_{T-1}$.
(b)\; using \eqref{eq:MT-1}.
This completes the proof for the case $r<d$.

When $r=d$, we have $\mu=1$ and $P_kP_k^\top=I_d$, hence
$\|P_k^\top \nabla F(x_k)\|=\|\nabla F(x_k)\|$ for all $k$.
Therefore $A_k$ holds for every $k$, and thus $X=T$.
Since $(1-s)\mu T=(1-s)T<T$, it follows that
\[
\mathbb{P}\!\left(X\le (1-s)\mu T\right)=0,
\]
which completes the proof.

\end{proof}

\begin{theorem}[Generalization of \Cref{thm:main_highprob}]\label{app:thm:main_highprob}
Suppose that $F$ satisfies \Cref{ass:lower,ass:Lsmooth}, and that the gradient 
satisfies \Cref{ass:unbiased,ass:subgauss}.
Let $\{x_k\}_{k\ge 0}$ be the sequence generated by the RS-SGD iteration.
Let $\bar{B}\coloneqq \lceil \max\{1, BT^q\} \rceil$ for $B>0$ and $q>0$,
$\alpha \coloneqq 24\sqrt{\frac{e(e+1)}{2}}$,
and $\bar{\eta} = \frac{r}{Ld}$.
Further define
\begin{equation}
\mu \coloneqq
\begin{cases}
1 - I_{\frac{r}{2d}}\!\left(\frac{r}{2}, \frac{d-r}{2}\right), & r<d,\\
1, & r=d,
\end{cases}
\label{app:eq:def_mu}
\end{equation}
where $I_{x}(a,b)$ denotes the regularized incomplete Beta function.
Then for any $\delta, \delta' \in (0,1)$ and $T$ such that
$2\log\!\left(\frac{1}{\delta'}\right) < \mu T$, 
with probability at least $1-\delta-\delta'$, it holds that
\begin{align*}
\min_{0\leq k \leq T-1}\|\nabla F(x_k)\|^2 
\leq 
\frac{
4L\frac{d}{r}\Delta_0
}{
\mu T - \sqrt{2\mu T \log\!\left(\frac{1}{\delta'}\right)}
}
\\
+
\frac{
2\frac{d}{r}\left(\frac{\alpha^2\sigma^2}{\bar{B}}\right)
\bigl(d\log 5 + 1\bigr)
}{
\mu - \sqrt{2\frac{\mu}{T}\log\!\left(\frac{1}{\delta'}\right)}
}
+
\frac{
2\frac{d}{r}\left(\frac{\alpha^2\sigma^2}{\bar{B}}\right)
\log\!\left(\frac{1}{\delta}\right)
}{
\mu T - \sqrt{2\mu T \log\!\left(\frac{1}{\delta'}\right)}
}.
\end{align*}
\end{theorem}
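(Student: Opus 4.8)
The plan is to combine a per-iteration descent inequality with two high-probability ingredients already established in the appendix: the lower bound on the count $X$ from \Cref{lem:martingale-cher}, and a cumulative-noise concentration bound built from the conditional sub-Gaussianity of \Cref{prop:minibatch}. Write $n_k := g_k - \nabla F(x_k)$ for the mini-batch noise. Starting from $L$-smoothness applied to $x_{k+1} = x_k - \bar\eta P_kP_k^\top g_k$ and using property (iii), namely $\|P_kP_k^\top g_k\|^2 = \tfrac{d}{r}\|P_k^\top g_k\|^2$, I would obtain
\[
F(x_{k+1}) \le F(x_k) - \bar\eta\,\langle P_k^\top\nabla F(x_k),\,P_k^\top g_k\rangle + \tfrac{L\bar\eta^2 d}{2r}\,\|P_k^\top g_k\|^2 .
\]
The key point is that the chosen step size $\bar\eta = r/(Ld)$ makes the quadratic coefficient exactly $\bar\eta/2$, so completing the square in $P_k^\top g_k = P_k^\top\nabla F(x_k) + P_k^\top n_k$ collapses the cross terms and yields the clean recursion $F(x_{k+1}) \le F(x_k) - \tfrac{\bar\eta}{2}\|P_k^\top\nabla F(x_k)\|^2 + \tfrac{\bar\eta}{2}\|P_k^\top n_k\|^2$.

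Summing over $k=0,\dots,T-1$ and invoking \Cref{ass:lower} telescopes the function values, giving $\sum_k \|P_k^\top\nabla F(x_k)\|^2 \le \tfrac{2\Delta_0}{\bar\eta} + \sum_k\|P_k^\top n_k\|^2$. To pass from projected gradients to the quantity of interest I would restrict the left-hand sum to the indices in the events $A_k$ of \Cref{lem:martingale-cher}: on $A_k$ we have $\|P_k^\top\nabla F(x_k)\|^2 \ge \tfrac12\|\nabla F(x_k)\|^2 \ge \tfrac12\min_{0\le j\le T-1}\|\nabla F(x_j)\|^2$, so the sum is at least $\tfrac{X}{2}\min_j\|\nabla F(x_j)\|^2$. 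On the right, the operator-norm bound $\|P_k^\top n_k\|^2 \le \tfrac{d}{r}\|n_k\|^2$ reduces everything to controlling $X$ from below and $\sum_k\|n_k\|^2$ from above, via $\min_j\|\nabla F(x_j)\|^2 \le \tfrac{2}{X}\bigl(\tfrac{2\Delta_0}{\bar\eta} + \tfrac{d}{r}\sum_k\|n_k\|^2\bigr)$.

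For the noise term I would fix the filtration $\mathcal{F}_k = \sigma(g_0,P_0,\dots,g_{k-1},P_{k-1})$, so that $x_k$ is $\mathcal{F}_k$-measurable and the fresh samples at step $k$ let \Cref{prop:minibatch} apply conditionally, giving $\mathbb{E}[\exp(\nu\|n_k\|^2)\mid\mathcal{F}_k] \le 5^d\exp(\nu\bar\sigma^2)$ for $\nu \le 1/\bar\sigma^2$, with $\bar\sigma = \alpha\sigma/\sqrt{\bar B}$. Iterating this bound through the tower property (each partial sum being measurable with respect to the relevant $\mathcal{F}_k$) yields $\mathbb{E}[\exp(\nu\sum_k\|n_k\|^2)] \le \exp\!\bigl(T(d\log 5 + \nu\bar\sigma^2)\bigr)$, and a Chernoff argument at the boundary choice $\nu = 1/\bar\sigma^2$ gives, with probability at least $1-\delta$, $\sum_k\|n_k\|^2 \le T\bar\sigma^2(d\log 5 + 1) + \bar\sigma^2\log(1/\delta)$. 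The main obstacle is precisely this step: the covering factor $5^d$ reappears at every iteration, and the crux is that it contributes only an additive $d\log 5$ per step to the exponent (producing the $(d\log 5+1)$ factor in the final bound) rather than blowing up multiplicatively in the tail estimate.

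Finally, I would apply \Cref{lem:martingale-cher} with $s = \sqrt{2\log(1/\delta')/(\mu T)}$, which lies in $(0,1)$ exactly under the hypothesis $2\log(1/\delta') < \mu T$; this gives $X \ge \mu T - \sqrt{2\mu T\log(1/\delta')} > 0$ with probability at least $1-\delta'$. A union bound over the two events (total failure probability at most $\delta+\delta'$) lets me substitute both estimates into the displayed bound on $\min_j\|\nabla F(x_j)\|^2$. Using $\tfrac{2\Delta_0}{\bar\eta} = 2L\tfrac{d}{r}\Delta_0$, the bound $\tfrac1X \le \bigl(\mu T - \sqrt{2\mu T\log(1/\delta')}\bigr)^{-1}$ for the $\Delta_0$ and $\log(1/\delta)$ contributions, and $\tfrac{T}{X} \le \bigl(\mu - \sqrt{2\tfrac{\mu}{T}\log(1/\delta')}\bigr)^{-1}$ for the $T$-proportional noise contribution, then reproduces the three stated terms and completes the proof.
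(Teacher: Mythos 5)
Your proposal is correct and takes essentially the same route as the paper's proof: the same key descent identity under $\bar{\eta}=\frac{r}{Ld}$, the same conditional application of \Cref{prop:minibatch} iterated through the tower property followed by a Chernoff/Markov step, the same lower bound on $X$ from \Cref{lem:martingale-cher} with $s=\sqrt{\tfrac{2}{\mu T}\log\tfrac{1}{\delta'}}$, and a final union bound yielding the identical three terms. The only difference is organizational: the paper carries the whole per-iteration quantity $C_k$ (including the function decrease) inside the exponential via a backward-induction supermartingale, whereas you telescope the descent inequality pathwise first and apply the exponential concentration only to $\sum_k\|n_k\|^2$; since your choice $\nu=\frac{1}{\bar{\sigma}^2}$ coincides with the paper's $w=\frac{2r}{\bar{\sigma}^2\bar{\eta}d}$ through $\nu=\frac{w\bar{\eta}}{2}\cdot\frac{d}{r}$, the constants match exactly.
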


\noindent\emph{Remark.} 
Setting $\delta\leftarrow \delta/2$ and $\delta'\leftarrow \delta/2$ in \Cref{app:thm:main_highprob} yields \Cref{thm:main_highprob}.

\begin{proof}
Let $n_k:=g_k-\nabla F(x_k)$. By $L$-smoothness,
\begin{align}
F(x_{k+1})-F(x_k)
&\le \langle \nabla F(x_k),x_{k+1}-x_k\rangle+\frac{L}{2}\|x_{k+1}-x_k\|^2 \notag\\
&\overset{(a)}{=} -\bar{\eta}\langle \nabla F(x_k),P_kP_k^\top g_k\rangle
+\frac{L\bar{\eta}^2}{2}\|P_kP_k^\top g_k\|^2 \notag\\
&\overset{(b)}{=} -\bar{\eta}\langle \nabla F(x_k),P_kP_k^\top (\nabla F(x_k)+n_k)\rangle
+\frac{L\bar{\eta}^2}{2}\|P_kP_k^\top (\nabla F(x_k)+n_k)\|^2,
\label{eq:rsgd-smooth-descent}
\end{align}
(a)\; uses $x_{k+1}-x_k=-\bar{\eta} P_kP_k^\top g_k$, and
(b)\; uses $g_k=\nabla F(x_k)+n_k$.

Using $P_k^\top P_k=\frac{d}{r}I_r$ and expanding \eqref{eq:rsgd-smooth-descent} yields
\begin{align}
\bar{\eta}\Big(1-\frac{L\bar{\eta}}{2}\cdot\frac{d}{r}\Big)\|P_k^\top\nabla F(x_k)\|^2
+F(x_{k+1})-F(x_k)
\le
\Big(L\bar{\eta}^2\frac{d}{r}-\bar{\eta}\Big)\langle \nabla F(x_k),P_kP_k^\top n_k\rangle
+\frac{L\bar{\eta}^2}{2}\cdot\frac{d}{r}\,\|P_k^\top n_k\|^2.
\label{eq:pre-key-descent}
\end{align}
Substituting $\bar{\eta}=\frac{r}{Ld}$ in \eqref{eq:pre-key-descent} gives
\begin{align}
\frac{\bar{\eta}}{2}\|P_k^\top\nabla F(x_k)\|^2 +F(x_{k+1})-F(x_k)
\le \frac{\bar{\eta}}{2}\|P_k^\top n_k\|^2.
\label{eq:key-descent}
\end{align}

Fix $w>0$ and define
\[
C_k := \frac{\bar{\eta}}{2}\|P_k^\top\nabla F(x_k)\|^2 +F(x_{k+1})-F(x_k),\quad
Z_k := w C_k - d\log 5,\quad
S_k := \sum_{j=k}^{T-1} Z_j.
\]
Let $\mathcal{F}_k:=\sigma(g_0,P_0,\ldots,g_{k-1},P_{k-1})$ and $\bar{\sigma}\coloneqq \alpha\cdot \frac{\sigma}{\sqrt{\bar{B}}}$.
Recall that \Cref{prop:minibatch} provides, for $|\lambda|\le 1/\bar{\sigma}$,
\[
\mathbb{E}\big[\exp(\lambda^2\|n_k\|^2)\mid \mathcal{F}_k\big]
\le 5^d\exp(\lambda^2\bar{\sigma}^2).
\]
In what follows we will apply this bound with
\begin{equation}
\lambda^2 := \frac{w\bar{\eta}}{2}\cdot\frac{d}{r},
\label{eq:lambda-choice}
\end{equation}
which requires the condition
\begin{equation}
\lambda^2 \le \frac{1}{\bar{\sigma}^2}
\quad\Longleftrightarrow\quad
w\le \frac{2}{\bar{\sigma}^2}\frac{r}{\bar{\eta} d}.
\label{eq:w-condition}
\end{equation}

Assume \eqref{eq:w-condition}. Then
\begin{align}
\mathbb{E}\big[e^{Z_k}\mid \mathcal{F}_k\big]
&= \mathbb{E}\big[\exp(wC_k-d\log 5)\mid \mathcal{F}_k\big] \notag\\
&\overset{(a)}{\le}
e^{-d\log 5}\,
\mathbb{E}\Big[\exp\!\big(\tfrac{w\bar{\eta}}{2}\|P_k^\top n_k\|^2\big)\mid \mathcal{F}_k\Big] \notag\\
&\overset{(b)}{\le}
e^{-d\log 5}\,
\mathbb{E}\Big[\exp\!\big(\tfrac{w\bar{\eta}}{2}\cdot\tfrac{d}{r}\|n_k\|^2\big)\mid \mathcal{F}_k\Big] \notag\\
&\overset{(c)}{\le}
e^{-d\log 5}\cdot
\Big(5^d\exp\!\big(\tfrac{w\bar{\eta}}{2}\cdot\tfrac{d}{r}\bar{\sigma}^2\big)\Big)
=
\exp\!\Big(\tfrac{w\bar{\eta}}{2}\cdot\tfrac{d}{r}\bar{\sigma}^2\Big).
\label{eq:expZ}
\end{align}
(a)\; uses \eqref{eq:key-descent},
(b)\; uses $\|P_k^\top n_k\|\le \|P_k^\top\|\|n_k\|=\sqrt{\frac{d}{r}}\|n_k\|$, and
(c)\; applies \Cref{prop:minibatch} with the choice \eqref{eq:lambda-choice}
(which is valid under \eqref{eq:w-condition}).

We next prove by backward induction that, for all $k=0,1,\ldots,T-1$,
\begin{equation}
\mathbb{E}\big[e^{S_k}\mid \mathcal{F}_k\big]
\le
\exp\!\Big(\tfrac{w\bar{\eta}}{2}\cdot\tfrac{d}{r}\bar{\sigma}^2\,(T-k)\Big).
\label{eq:supermgf-ind}
\end{equation}
For the base case $k=T-1$, we have $S_{T-1}=Z_{T-1}$, hence \eqref{eq:supermgf-ind}
follows directly from \eqref{eq:expZ}. Assume \eqref{eq:supermgf-ind} holds for $k+1$.
Then, using $S_k=Z_k+S_{k+1}$ and the tower property,
\begin{align}
\mathbb{E}\big[e^{S_k}\mid \mathcal{F}_k\big]
&=
\mathbb{E}\big[e^{Z_k}e^{S_{k+1}}\mid \mathcal{F}_k\big] \notag\\
&{=}
\mathbb{E}\Big[e^{Z_k}\,\mathbb{E}\big[e^{S_{k+1}}\mid \mathcal{F}_{k+1}\big]\mid \mathcal{F}_k\Big] \notag\\
&\overset{(a)}{\le}
\exp\!\Big(\tfrac{w\bar{\eta}}{2}\cdot\tfrac{d}{r}\bar{\sigma}^2\,(T-k-1)\Big)\,
\mathbb{E}\big[e^{Z_k}\mid \mathcal{F}_k\big] \notag\\
&\overset{(b)}{\le}
\exp\!\Big(\tfrac{w\bar{\eta}}{2}\cdot\tfrac{d}{r}\bar{\sigma}^2\,(T-k)\Big),
\notag
\end{align}
where (a) uses the induction hypothesis, and (b) uses \eqref{eq:expZ}. This completes the induction,
and in particular yields
\begin{equation}
\mathbb{E}[e^{S_0}]
\le
\exp\!\Big(\tfrac{w\bar{\eta}}{2}\cdot\tfrac{d}{r}\bar{\sigma}^2\,T\Big).
\label{eq:supermgf_two}
\end{equation}
Applying Markov's inequality to the nonnegative random variable $\exp(S_0)$, for any $t>0$,
\begin{equation}
\mathbb{P}\big(S_0\ge t\big)
=
\mathbb{P}\big(e^{S_0}\ge e^{t}\big)
\le
e^{-t}\,\mathbb{E}\big[e^{S_0}\big].
\label{eq:markov-detail}
\end{equation}
Using \eqref{eq:supermgf_two} and choosing
$t:=\tfrac{w\bar{\eta}}{2}\cdot\tfrac{d}{r}\bar{\sigma}^2\,T+\log\frac{1}{\delta}$, we obtain
\[
\mathbb{P}\!\left(
S_0\ge \tfrac{w\bar{\eta}}{2}\cdot\tfrac{d}{r}\bar{\sigma}^2\,T+\log\frac{1}{\delta}
\right)
\le \delta.
\]
Therefore, with probability at least $1-\delta$, it holds that
\begin{align}
S_0 \le \tfrac{w\bar{\eta}}{2}\cdot\tfrac{d}{r}\bar{\sigma}^2\,T+\log\frac{1}{\delta}.
\label{eq:S0-markov}
\end{align}
By definition of $S_0$, we have
\begin{align*}
S_0
&=\sum_{k=0}^{T-1}\bigl(wC_k-d\log 5\bigr) \\
&=\sum_{k=0}^{T-1}\Bigl(
w\Bigl(\tfrac{\bar{\eta}}{2}\|P_k^\top\nabla F(x_k)\|^2
+F(x_{k+1})-F(x_k)\Bigr)-d\log 5
\Bigr) \\
&=\frac{w\bar{\eta}}{2}\sum_{k=0}^{T-1}\|P_k^\top\nabla F(x_k)\|^2
+w\bigl(F(x_T)-F(x_0)\bigr)
-dT\log 5.
\end{align*}

Choose
\[
w \coloneqq \frac{2r}{\bar{\sigma}^2 \bar{\eta} d},
\]
so that
\eqref{eq:expZ} is satisfied.
Combining this choice with \eqref{eq:S0-markov}, we conclude that with probability
at least $1-\delta$,
\begin{align*}
\frac{w\bar{\eta}}{2}\sum_{k=0}^{T-1}\|P_k^\top\nabla F(x_k)\|^2
+w\bigl(F(x_T)-F(x_0)\bigr)
-dT\log 5
\le
\frac{w\bar{\eta}}{2}\cdot\frac{d}{r}\bar{\sigma}^2 T
+\log\frac{1}{\delta}.
\end{align*}

Using $F(x_0)-F(x_T)\le F(x_0)-F^\ast\leq\Delta_0$,
$\frac{w\bar{\eta}}{2}=\frac{1}{\bar{\sigma}^2}\cdot\frac{r}{d}$,
and $\bar{\eta}=\frac{r}{Ld}$, this inequality simplifies to
\begin{align}
\sum_{k=0}^{T-1}\|P_k^\top\nabla F(x_k)\|^2
\le
2L\frac{d}{r}\Delta_0
+\frac{d}{r}\bar{\sigma}^2 dT\log 5
+\frac{d}{r}\bar{\sigma}^2 T
+\frac{d}{r}\bar{\sigma}^2\log\frac{1}{\delta}.
\label{eq:sum-proj-grad}
\end{align}

Next, define the events
\[
A_k \coloneqq
\Bigl\{
\|P_k^\top\nabla F(x_k)\|^2
\ge \tfrac{1}{2}\|\nabla F(x_k)\|^2
\Bigr\},
\qquad
X \coloneqq \sum_{k=0}^{T-1}\mathbf{1}_{A_k}.
\]
Then
\begin{align}
\sum_{k=0}^{T-1}\|P_k^\top\nabla F(x_k)\|^2
\ge
\frac{1}{2}\sum_{k=0}^{T-1}\|\nabla F(x_k)\|^2\mathbf{1}_{A_k}
\ge
\frac{X}{2}\min_{0\le k\le T-1}\|\nabla F(x_k)\|^2 .
\label{eq:lower-by-X}
\end{align}

By \Cref{lem:martingale-cher}, for any $s\in(0,1)$,
\begin{equation}
\mathbb{P}\bigl(X\le (1-s)\mu T\bigr)
\le \exp\!\left(-\frac{s^2}{2}\mu T\right).
\label{eq:chernoff-X}
\end{equation}
Fix $\delta'\in(0,1)$ and assume $2\log(1/\delta')<\mu T$.
Setting
\[
s \coloneqq \sqrt{\frac{2}{\mu T}\log\frac{1}{\delta'}},
\]
which satisfies $s\in(0,1)$ under this condition, \eqref{eq:chernoff-X} yields
\begin{equation}
\mathbb{P}\!\left(
X\le \mu T-\sqrt{2\mu T\log\frac{1}{\delta'}}
\right)\le \delta' .
\label{eq:X-lower}
\end{equation}
Therefore, with probability at least $1-\delta'$,
\begin{align}
\sum_{k=0}^{T-1}\|P_k^\top\nabla F(x_k)\|^2
\ge
\frac{1}{2}\Bigl(
\mu T-\sqrt{2\mu T\log\frac{1}{\delta'}}
\Bigr)
\min_{0\le k\le T-1}\|\nabla F(x_k)\|^2 .
\label{eq:proj-lower}
\end{align}

Since $2\log(1/\delta')<\mu T$, we have
\(
\mu T-\sqrt{2\mu T\log(1/\delta')}>0.
\)
Therefore, 
combining \eqref{eq:sum-proj-grad} and \eqref{eq:proj-lower} and applying a union
bound shows that, with probability at least $1-\delta-\delta'$,
\begin{align*}
\min_{0\le k\le T-1}\|\nabla F(x_k)\|^2
\le
\frac{
4L\frac{d}{r}\Delta_0
}{
\mu T-\sqrt{2\mu T\log\frac{1}{\delta'}}
}
+
\frac{
2\frac{d}{r}\bar{\sigma}^2(d\log 5+1)
}{
\mu-\sqrt{2\frac{\mu}{T}\log\frac{1}{\delta'}}
}
+
\frac{
2\frac{d}{r}\bar{\sigma}^2\log\frac{1}{\delta}
}{
\mu T-\sqrt{2\mu T\log\frac{1}{\delta'}}
}.
\end{align*}
This completes the proof.
\end{proof}

\paragraph{Oracle complexity.}
From \Cref{thm:main_highprob} (applied with $q=1$ and $B=\frac{d\sigma^2}{L\Delta_0}$),
the iteration complexity satisfies 
\[
\tilde{\mathcal O}\!\left(
\frac{d}{\mu r}\,\Delta_0 L\,\epsilon^{-2}
\right).
\]
Recalling that the oracle complexity is given by $r\bar{B}T$,
we obtain
\[
\tilde{\mathcal O}\!\left(
\frac{d^{3}}{\mu^{2}r}\,\Delta_0L\sigma^{2}\,\epsilon^{-4}
\right).
\]

\section{Missing Proofs for RS-NSGD}
\label{app:rnsgd}

\begin{proposition}[Restatement of \Cref{prop:tau}]\label{app:prop:tau}
Let \(x \neq 0\). Suppose \(P\) is constructed from a Haar-random orthogonal matrix.
Then \(\|P^\top x\| \neq 0\) almost surely, and there exists a constant
\(\tau > 0\) such that
\[
\mathbb{E}\!\left[\frac{PP^\top x}{\|P^\top x\|}\right]
=
\tau\,\frac{x}{\|x\|}.
\]
Specifically, \(\tau\) is given by
\[
\tau
=
\sqrt{\frac{d}{r}}\,
\frac{
\Gamma\!\left(\frac{r+1}{2} \right)
\Gamma\!\left(\frac{d}{2}\right)
}{
\Gamma\!\left(\frac{r}{2}\right)
\Gamma\!\left(\frac{d+1}{2} \right)
}.
\]
Here, $\Gamma(\cdot)$ denotes the Gamma function.

Moreover,  $\frac{1}{\sqrt{2}} \leq \tau \leq 1.$
\end{proposition}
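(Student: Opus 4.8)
The plan is to first reduce to $\|x\|=1$ by setting $v:=x/\|x\|$ (the quantity $PP^\top x/\|P^\top x\|$ is invariant under positive rescaling of $x$), and then to show that
\[
w:=\mathbb{E}\!\left[\frac{PP^\top v}{\|P^\top v\|}\right]
\]
is parallel to $v$ by a rotational-symmetry argument. Concretely, for any $V\in O(d)$ fixing $v$ (so $Vv=v$, hence $V^\top v=v$), property (i) gives $VP\overset{d}{=}P$; replacing $P$ by $VP$ inside the expectation and using $V^\top v=v$ together with $\|P^\top V^\top v\|=\|P^\top v\|$ shows $w=Vw$. Since the stabilizer of $v$ acts as $O(d-1)$ on $v^\perp$ and fixes no nonzero vector there, $w$ must be a scalar multiple of $v$, say $w=\tau v$. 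The almost-sure nonvanishing of $\|P^\top x\|$ is exactly property (iv).

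I would then identify $\tau$ by taking the inner product with $v$. Since $\langle PP^\top v,v\rangle=\|P^\top v\|^2$,
\[
\tau=\langle w,v\rangle
=\mathbb{E}\!\left[\frac{\|P^\top v\|^2}{\|P^\top v\|}\right]
=\mathbb{E}\bigl[\|P^\top v\|\bigr],
\]
which is strictly positive by property (iv). To evaluate this, write $P=\sqrt{d/r}\,Q$ where $Q$ collects the first $r$ columns of a Haar orthogonal matrix; then $\|Q^\top v\|^2$ is the squared length of the projection of a unit vector onto a uniformly random $r$-dimensional subspace, which is $\mathrm{Beta}(r/2,(d-r)/2)$-distributed. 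Hence $\|P^\top v\|^2=\tfrac{d}{r}Z$ with $Z\sim\mathrm{Beta}(r/2,(d-r)/2)$, and the Beta half-moment
\[
\mathbb{E}[Z^{1/2}]=\frac{B\!\left(\tfrac{r+1}{2},\tfrac{d-r}{2}\right)}{B\!\left(\tfrac{r}{2},\tfrac{d-r}{2}\right)}
=\frac{\Gamma\!\left(\tfrac{r+1}{2}\right)\Gamma\!\left(\tfrac{d}{2}\right)}{\Gamma\!\left(\tfrac{r}{2}\right)\Gamma\!\left(\tfrac{d+1}{2}\right)}
\]
gives $\tau=\sqrt{d/r}\,\mathbb{E}[Z^{1/2}]$, which is the claimed formula. (For $r=d$ one has $\|P^\top v\|=1$ deterministically, so $\tau=1$, consistent with the formula.)

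It remains to prove $\tfrac{1}{\sqrt2}\le\tau\le1$. The upper bound is immediate from Jensen: using property (ii),
\[
\mathbb{E}[\|P^\top v\|]\le\sqrt{\mathbb{E}\|P^\top v\|^2}=\sqrt{v^\top\mathbb{E}[PP^\top]\,v}=\sqrt{v^\top v}=1,
\]
equivalently $\mathbb{E}[Z^{1/2}]\le\sqrt{\mathbb{E}Z}=\sqrt{r/d}$. For the lower bound, write $g(a):=\Gamma(a+\tfrac12)/\Gamma(a)$, so that $\tau^2=\tfrac{d}{r}\,g(r/2)^2/g(d/2)^2$. Log-convexity of $\Gamma$ yields $g(a)^2\le a$ (hence $g(d/2)^2\le d/2$), but its matching log-convex lower bound $g(a)^2\ge a-\tfrac12$ is too weak at $r=1$, giving only $\tau^2\ge1-\tfrac1r$, which degenerates. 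I would instead invoke the sharper Wendel--Kershaw inequality $\Gamma(x+1)/\Gamma(x+\tfrac12)\ge\sqrt{x+\tfrac14}$, i.e. $g(a)^2\ge a-\tfrac14$. Combining $g(r/2)^2\ge \tfrac r2-\tfrac14$ with $g(d/2)^2\le \tfrac d2$ gives
\[
\tau^2\ge\frac{d}{r}\cdot\frac{\tfrac r2-\tfrac14}{\tfrac d2}=1-\frac{1}{2r}\ge\frac12
\qquad(r\ge1),
\]
so $\tau\ge1/\sqrt2$. The main obstacle is precisely securing this sharp lower Gamma-ratio bound at the boundary case $r=1$: the elementary log-convexity estimate does not suffice, so one must either cite Kershaw's inequality or supply a short self-contained argument (for instance, verifying that $a\mapsto g(a)^2-(a-\tfrac14)\ge0$, or that the infimum of $\tau$ over $d$ at fixed $r$ is attained in the limit $d\to\infty$, where $\tau\to\sqrt{2/r}\,g(r/2)\ge\sqrt{1-\tfrac{1}{2r}}$).
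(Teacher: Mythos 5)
Your proof is correct, but it reaches the conclusion by a genuinely different route than the paper in both main steps. For the parallelism, you use the rotational-symmetry/stabilizer argument ($w=Vw$ for every $V\in O(d)$ fixing $v$, hence $w\parallel v$) and then identify $\tau=\langle w,v\rangle=\mathbb{E}\bigl[\|P^\top v\|\bigr]$ by taking the inner product; the paper instead observes that $\frac{PP^\top x}{\|P^\top x\|}=\nabla_x\|P^\top x\|$ almost surely, interchanges gradient and expectation (justified by the uniform bound $\|P\|\le\sqrt{d/r}$), computes $\mathbb{E}\|P^\top x\|=\tau\|x\|$, and differentiates. Your symmetry argument buys a more elementary derivation that sidesteps the differentiation-under-the-integral technicality; the paper's gradient identity is what makes $\tau$ appear as the derivative of a scalar function, but both proofs then reduce to exactly the same $\mathrm{Beta}(\tfrac r2,\tfrac{d-r}2)$ half-moment computation. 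For the bound $\tau\ge 1/\sqrt2$, the paper uses only Wendel's inequality (which it cites), in the form
\[
\frac{\Gamma(x+\tfrac12)}{\Gamma(x)}\;\ge\;\sqrt{x}\left(\frac{x}{x+\tfrac12}\right)^{1/2},
\qquad
\frac{x}{x+\tfrac12}\ge\frac12\ \text{ for } x\ge\tfrac12 ,
\]
applied at $x=r/2$, together with the upper Wendel bound at $x=d/2$; this yields $\tau\ge 1/\sqrt2$ directly, including the boundary case $r=1$. So the obstacle you flag—that the elementary log-convexity bound $g(a)^2\ge a-\tfrac12$ degenerates at $r=1$—is resolvable without Kershaw: Wendel's lower bound gives $g(r/2)^2\ge\frac{r^2}{2(r+1)}$, hence $\tau^2\ge\frac{r}{r+1}\ge\frac12$. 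Your Kershaw-based route is also valid and even yields the slightly sharper per-$r$ bound $\tau^2\ge 1-\frac{1}{2r}$, but it requires citing (or proving) the stronger inequality $\Gamma(x+1)/\Gamma(x+\tfrac12)\ge\sqrt{x+\tfrac14}$, and since Kershaw's inequality is stated for $x>0$, the case $r=1$ needs the direct check $g(\tfrac12)^2=1/\pi\ge\tfrac14$, which you gesture at but do not carry out.
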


\begin{proof}
Let $g:\mathbb{R}^d\to\mathbb{R}$ be defined by $g(x)\coloneqq \|P^\top x\|$.
For $x\neq 0$, we have $\|P^\top x\|\neq 0$ almost surely, and thus $g$ is differentiable at $x$ almost surely with
\begin{equation}\label{eq:grad_g}
\nabla g(x)=\frac{PP^\top x}{\|P^\top x\|}.
\end{equation}
Moreover,
\begin{equation}\label{eq:grad_bound}
\left\|\nabla g(x)\right\|
=\left\|\frac{PP^\top x}{\|P^\top x\|}\right\|
\le \|P\|
\le \sqrt{\frac{d}{r}} .
\end{equation}
Hence we may interchange gradient and expectation, obtaining
\begin{equation}\label{eq:swap}
\mathbb{E}\!\left[\frac{PP^\top x}{\|P^\top x\|}\right]
=\mathbb{E}[\nabla g(x)]
=\nabla \mathbb{E}[g(x)].
\end{equation}

We next compute $\mathbb{E}[g(x)]$, which can be written as
\begin{equation}\label{eq:Eg_hom}
\mathbb{E}[g(x)]
=\mathbb{E}[\|P^\top x\|]
=\|x\|\,
\mathbb{E}\!\left[\left\|P^\top \frac{x}{\|x\|}\right\|\right].
\end{equation}
We use the following fact (see the proof of Lemma~1 in \cite{kozak2021}): the random variable
\begin{equation}\label{eq:defY}
Y \coloneqq \frac{r}{d}\left\|P^\top \frac{x}{\|x\|}\right\|^2
\end{equation}
follows a Beta distribution,
\begin{equation}\label{eq:Ybeta}
Y\sim \mathrm{Beta}\!\left(\frac{r}{2},\frac{d-r}{2}\right).
\end{equation}
Let $\alpha\coloneqq \frac{r}{2}$ and $\beta\coloneqq \frac{d-r}{2}$,
we have
\begin{align}
\mathbb{E}[\sqrt{Y}]
&=\int_0^1 \sqrt{y}\,
\frac{y^{\alpha-1}(1-y)^{\beta-1}}{\mathrm{B}(\alpha,\beta)}\,dy
=\int_0^1
\frac{y^{\alpha-\frac12}(1-y)^{\beta-1}}{\mathrm{B}(\alpha,\beta)}\,dy
\notag\\
&=\frac{\mathrm{B}\!\left(\alpha+\frac12,\beta\right)}{\mathrm{B}(\alpha,\beta)}
=\frac{\Gamma\!\left(\alpha+\frac12\right)\Gamma(\alpha+\beta)}
{\Gamma\!\left(\alpha+\beta+\frac12\right)\Gamma(\alpha)}
\notag\\
&=\frac{\Gamma\!\left(\frac{r+1}{2}\right)\Gamma\!\left(\frac{d}{2}\right)}
{\Gamma\!\left(\frac{d+1}{2}\right)\Gamma\!\left(\frac{r}{2}\right)}.
\label{eq:EsqrtY}
\end{align}
By \eqref{eq:defY}, $\left\|P^\top \frac{x}{\|x\|}\right\|=\sqrt{\frac{d}{r}}\,\sqrt{Y}$, hence
\begin{equation}\label{eq:unit_expect}
\mathbb{E}\!\left[\left\|P^\top \frac{x}{\|x\|}\right\|\right]
=
\sqrt{\frac{d}{r}}\,\mathbb{E}[\sqrt{Y}]
=
\sqrt{\frac{d}{r}}\,
\frac{\Gamma\!\left(\frac{r+1}{2}\right)\Gamma\!\left(\frac{d}{2}\right)}
{\Gamma\!\left(\frac{d+1}{2}\right)\Gamma\!\left(\frac{r}{2}\right)}.
\end{equation}
Substituting \eqref{eq:unit_expect} into \eqref{eq:Eg_hom} yields
\begin{equation}\label{eq:Eg_final}
\mathbb{E}[g(x)]
=
\tau\,\|x\|,
\qquad
\tau\coloneqq
\sqrt{\frac{d}{r}}\,
\frac{\Gamma\!\left(\frac{r+1}{2}\right)\Gamma\!\left(\frac{d}{2}\right)}
{\Gamma\!\left(\frac{r}{2}\right)\Gamma\!\left(\frac{d+1}{2}\right)}.
\end{equation}
Taking the gradient of \eqref{eq:Eg_final} and using \eqref{eq:swap}, we conclude that
\[
\mathbb{E}\!\left[\frac{PP^\top x}{\|P^\top x\|}\right]
=\nabla \mathbb{E}[g(x)]
=\tau\,\frac{x}{\|x\|}.
\]

It remains to show $\frac{1}{\sqrt2}\le \tau \le 1$.
For the upper bound, since $\sqrt{\cdot}$ is concave and $Y$ is Beta with mean $\mathbb{E}[Y]=\frac{r}{d}$,
\begin{equation}\label{eq:tau_upper}
\tau
=\sqrt{\frac{d}{r}}\mathbb{E}[\sqrt{Y}]
\overset{(a)}{\le}
\sqrt{\frac{d}{r}}\sqrt{\mathbb{E}[Y]}
=
\sqrt{\frac{d}{r}}\sqrt{\frac{r}{d}}
=1.
\end{equation}
Here, (a) follows from Jensen's inequality.
Therefore, $\tau \le 1$.
To derive the lower bound on $\tau$, we invoke Wendel's inequality \cite{wendel1948}.
By Wendel's inequality, for any $x>0$ and $0<s<1$,
\[
\left(\frac{x}{x+s}\right)^{1-s}\le \frac{\Gamma(x+s)}{x^s\Gamma(x)}\le 1.
\]
Setting $s=\frac12$ and multiplying by $\sqrt{x}$, we obtain
\begin{equation}\label{eq:wendel_s12}
\sqrt{x}\left(\frac{x}{x+\frac12}\right)^{1/2}
\le
\frac{\Gamma\!\left(x+\frac12\right)}{\Gamma(x)}
\le
\sqrt{x}.
\end{equation}
For $x \ge \tfrac12$, the ratio $\frac{x}{x+\frac12}$ is minimized at $x=\tfrac12$,
which gives $\frac{x}{x+\frac12}\ge \frac12$.
Taking square roots on both sides yields
\begin{equation}\label{eq:ratio_lower_aux}
\left(\frac{x}{x+\frac12}\right)^{1/2}\ge \frac{1}{\sqrt2}.
\end{equation}
Combining \eqref{eq:wendel_s12} (left inequality) with \eqref{eq:ratio_lower_aux} and taking $x=\frac{r}{2}\ge \frac12$ gives
\begin{equation}\label{eq:gamma_ratio_r}
\frac{\Gamma\!\left(\frac{r}{2}+\frac12\right)}{\Gamma\!\left(\frac{r}{2}\right)}
\overset{}{\ge}
\frac{1}{\sqrt2}\sqrt{\frac{r}{2}}.
\end{equation}
Moreover, from \eqref{eq:wendel_s12} (right inequality) we obtain
$\Gamma(x)/\Gamma(x+\frac12)\ge 1/\sqrt{x}$ for all $x>0$, hence for $x=\frac{d}{2}$,
\begin{equation}\label{eq:gamma_ratio_d}
\frac{\Gamma\!\left(\frac{d}{2}\right)}{\Gamma\!\left(\frac{d}{2}+\frac12\right)}
\overset{}{\ge}
\sqrt{\frac{2}{d}}.
\end{equation}
Using \eqref{eq:gamma_ratio_r} and \eqref{eq:gamma_ratio_d} in the definition of $\tau$ in \eqref{eq:Eg_final}, we obtain
\begin{align}
\tau
&=
\sqrt{\frac{d}{r}}\,
\frac{\Gamma\!\left(\frac{r}{2}+\frac12\right)}{\Gamma\!\left(\frac{r}{2}\right)}
\cdot
\frac{\Gamma\!\left(\frac{d}{2}\right)}{\Gamma\!\left(\frac{d}{2}+\frac12\right)}
\notag\\
&{\ge}
\sqrt{\frac{d}{r}}
\cdot
\frac{1}{\sqrt2}\sqrt{\frac{r}{2}}
\cdot
\sqrt{\frac{2}{d}}
=\frac{1}{\sqrt2}.
\label{eq:tau_lower}
\end{align}
This completes the proof.
\end{proof}

\begin{proposition}[Restatement of \Cref{prop:secondterm}]
\label{app:prop:secondterm}
Let $S$ be a $d\times d$ square matrix, and $P$ be a Haar matrix. Then,
\[
\mathbb{E}\left[\frac{v^\top PP^\top S PP^\top v}{\|P^\top v\|^2}\right]
=
\frac{d(r-1)}{r(d-1)}\frac{v^\top Sv}{\|v\|^2}
+\frac{d-r}{r(d-1)}\operatorname{tr}(S).
\]
\end{proposition}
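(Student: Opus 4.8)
The plan is to reduce the whole expression to the second moment of a single random unit vector, and then pin down that moment purely by symmetry. First I would normalize the projector: since $P^\top P=\frac{d}{r}I_r$, the matrix $\Pi:=\frac{r}{d}PP^\top$ satisfies $\Pi^\top=\Pi$ and $\Pi^2=\Pi$, so it is the orthogonal projection onto a rank-$r$ subspace, and $U\Pi U^\top=\frac{r}{d}(UP)(UP)^\top\stackrel{d}{=}\Pi$ for every $U\in O(d)$ by property (i); thus $\Pi$ projects onto a \emph{rotationally invariant} random $r$-dimensional subspace. Writing $PP^\top=\frac{d}{r}\Pi$ and $\|P^\top v\|^2=v^\top PP^\top v=\frac{d}{r}\|\Pi v\|^2$, the integrand becomes
\[
\frac{v^\top PP^\top S PP^\top v}{\|P^\top v\|^2}
=\frac{d}{r}\,\frac{(\Pi v)^\top S(\Pi v)}{\|\Pi v\|^2}
=\frac{d}{r}\, w^\top S w,
\qquad w:=\frac{\Pi v}{\|\Pi v\|},
\]
which is well defined almost surely by property (iv). Hence the target equals $\frac{d}{r}\operatorname{tr}(S\,M)$ with $M:=\mathbb{E}[ww^\top]$, and everything reduces to identifying the $d\times d$ matrix $M$.

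Next I would fix the structure of $M$ using the symmetry of $w$. Let $\hat v:=v/\|v\|$ and take any $U\in O(d)$ fixing $\hat v$. Then $Uw=\frac{(U\Pi U^\top)v}{\|(U\Pi U^\top)v\|}$, and since $U\Pi U^\top\stackrel{d}{=}\Pi$ this has the same law as $w$, so $UMU^\top=M$. Because $M$ therefore commutes with the entire stabilizer of $\hat v$ (an $O(d-1)$ acting on $\hat v^\perp$), a standard Schur/symmetry argument forces
\[
M=a\,\hat v\hat v^\top+b\,(I_d-\hat v\hat v^\top)
\]
for scalars $a,b$. Reducing $M$ to these two degrees of freedom is the conceptual heart of the argument.

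Finally I would pin down $a,b$ by two scalar identities. Taking traces and using $\|w\|=1$ almost surely gives $\operatorname{tr}(M)=\mathbb{E}[\|w\|^2]=1$, i.e.\ $a+(d-1)b=1$. For the second, I would note $\hat v^\top w=\|\Pi\hat v\|$ (since $\hat v^\top\Pi\hat v=\|\Pi\hat v\|^2$ by $\Pi^2=\Pi=\Pi^\top$), whence
\[
a=\hat v^\top M\hat v=\mathbb{E}[(\hat v^\top w)^2]=\mathbb{E}[\|\Pi\hat v\|^2]
=\hat v^\top\mathbb{E}[\Pi]\,\hat v=\tfrac{r}{d},
\]
using $\mathbb{E}[\Pi]=\frac{r}{d}\mathbb{E}[PP^\top]=\frac{r}{d}I_d$ from property (ii); this gives $b=\frac{d-r}{d(d-1)}$. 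Substituting $M$ into $\frac{d}{r}\operatorname{tr}(SM)$ with $\operatorname{tr}(S\hat v\hat v^\top)=v^\top Sv/\|v\|^2$ and $\operatorname{tr}(S(I_d-\hat v\hat v^\top))=\operatorname{tr}(S)-v^\top Sv/\|v\|^2$, and collecting the $v^\top Sv/\|v\|^2$ coefficient via $1-\frac{d-r}{r(d-1)}=\frac{d(r-1)}{r(d-1)}$, yields exactly the claimed identity.

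The main obstacle is the symmetry step that cuts $M$ down to the two parameters $a,b$; everything downstream is a short computation that relies only on $\mathbb{E}[PP^\top]=I_d$ and $\|w\|=1$, so the heavier Beta-distribution fact used for $\tau$ is not needed here. I would also double-check the degenerate case $r=d$ (equivalently $d=1$), where $\Pi=I_d$, $w=\hat v$ is deterministic, and the formula collapses to $v^\top Sv/\|v\|^2$, confirming consistency.
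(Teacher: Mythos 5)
Your proof is correct and follows essentially the same route as the paper's: both reduce the expectation to the second-moment matrix $M$ of the normalized projected direction, use invariance under the orthogonal stabilizer of $v$ to force the two-parameter form $M = a\,\hat v\hat v^\top + b\,(I_d - \hat v\hat v^\top)$, and then pin down $a,b$ from $\operatorname{tr}(M)$ and $\hat v^\top M \hat v$ using $\mathbb{E}[PP^\top]=I_d$ and $P^\top P=\tfrac{d}{r}I_r$. The differences are cosmetic: you rescale $PP^\top$ to a genuine projector $\Pi$ and invoke the standard Schur/commutant argument, whereas the paper rotates $v$ to $e_1$ and establishes the same structural constraint explicitly via reflection and permutation matrices.
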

\begin{proof}
Fix a vector $v \in \mathbb{R}^d$.  
There exists an orthogonal matrix $\hat Q \in O(d)$ such that
\[
v = \|v\| \hat Q e_1,
\]
where $e_1 \in \mathbb{R}^d$ denotes the vector whose first component is $1$ and whose remaining components are $0$.

By the invariance of the Haar matrix, $\hat Q P$ has the same distribution as $P$.  
Define
\[
\hat S \coloneqq \hat Q^\top S \hat Q .
\]
Then we have
\begin{align}
\mathbb{E}\!\left[\frac{v^\top PP^\top S PP^\top v}{\|P^\top v\|^2}\right]
&= \mathbb{E}\!\left[\frac{v^\top \hat Q PP^\top \hat Q^\top S \hat Q PP^\top \hat Q^\top v}{\|P^\top \hat Q^\top v\|^2}\right] \notag\\
&= \mathbb{E}\!\left[\frac{\|v\| e_1^\top PP^\top \hat Q^\top S \hat Q PP^\top \|v\| e_1}{\|P^\top \|v\| e_1\|^2}\right] \notag\\
&= \mathbb{E}\!\left[\frac{e_1^\top PP^\top \hat S PP^\top e_1}{\|P^\top e_1\|^2}\right].
\label{eq:LHS}
\end{align}

Define the random vector
\[
u \coloneqq \frac{PP^\top e_1}{\|P^\top e_1\|}.
\]
Then
\begin{align}
\mathbb{E}\!\left[\frac{e_1^\top PP^\top \hat S PP^\top e_1}{\|P^\top e_1\|^2}\right]
&= \mathbb{E}[u^\top \hat S u] \notag\\
&= \operatorname{tr}\!\left(\mathbb{E}[uu^\top]\hat S\right),
\label{eq:RHS}
\end{align}
where we used the identity
$\mathbb{E}[u^\top \hat S u]
= \mathbb{E}[\operatorname{tr}(u^\top \hat S u)]
= \mathbb{E}[\operatorname{tr}(uu^\top \hat S)]
= \operatorname{tr}(\mathbb{E}[uu^\top]\hat S)$.

Combining \eqref{eq:LHS} and \eqref{eq:RHS}, we obtain
\[
\mathbb{E}\!\left[\frac{v^\top PP^\top S PP^\top v}{\|P^\top v\|^2}\right]
= \operatorname{tr}(M\hat S),
\quad
M \coloneqq \mathbb{E}[uu^\top].
\]

We now characterize the structure of $M$.
Consider the subset
\[
\mathcal{S} \coloneqq \{Q \in O(d) \mid Q e_1 = e_1\}.
\]
For any $Q \in \mathcal{S}$, by the invariance of the Haar matrix,
\begin{align*}
Q^\top M Q
&= \mathbb{E}[Q^\top uu^\top Q] \\
&= \mathbb{E}\!\left[\frac{Q^\top PP^\top e_1 e_1^\top PP^\top Q}{\|P^\top e_1\|^2}\right] \\
&= \mathbb{E}\!\left[\frac{Q^\top PP^\top Q e_1 e_1^\top Q^\top PP^\top Q}{\|P^\top Q e_1\|^2}\right] \\
&= \mathbb{E}\!\left[\frac{(Q^\top P)(Q^\top P)^\top e_1 e_1^\top (Q^\top P)(Q^\top P)^\top}{\|(Q^\top P)^\top e_1\|^2}\right] \\
&= \mathbb{E}\!\left[\frac{P P^\top e_1 e_1^\top PP^\top}{\|P^\top e_1\|^2}\right] \\
&= \mathbb{E}[uu^\top] \\
&= M.
\end{align*}

This invariance implies strong structural constraints on $M$.
First, fix $i \in \{2,\dots,d\}$ and define the matrix
\[
A_i \coloneqq \diag(1,\dots,1,-1,1,\dots,1) \in O(d-1),
\]
where the $(i-1)$-th diagonal entry is $-1$ and the remaining $d-2$ diagonal
entries are equal to $1$.
Define
\[
Q \coloneqq
\begin{pmatrix}
1 & \mathbf{0} \\
\mathbf{0} & A_i
\end{pmatrix}.
\]
Then $Q \in \mathcal{S}$ by construction.

Since $Q$ has $-1$ only at the $i$-th diagonal entry and $1$ at all other
diagonal entries, we compute $Q^\top M Q$ explicitly.
Writing $M=(m_{jk})_{1\leq j,k\leq d}$, we have
\begin{align*}
M
= Q^\top M Q
= \begin{pmatrix}
  m_{1,1} & \cdots & m_{1,i-1} & -m_{1,i} & m_{1,i+1} & \cdots & m_{1,d} \\[4pt]
  \vdots  &        & \vdots    & \vdots   & \vdots    &        & \vdots \\[4pt]
  m_{i-1,1} & \cdots & m_{i-1,i-1} & -m_{i-1,i} & m_{i-1,i+1} & \cdots & m_{i-1,d} \\[6pt]
  -m_{i,1} & \cdots & -m_{i,i-1} & m_{i,i} & -m_{i,i+1} & \cdots & -m_{i,d} \\[6pt]
  m_{i+1,1} & \cdots & m_{i+1,i-1} & -m_{i+1,i} & m_{i+1,i+1} & \cdots & m_{i+1,d} \\[4pt]
  \vdots &        & \vdots & \vdots & \vdots &        & \vdots \\[4pt]
  m_{d,1} & \cdots & m_{d,i-1} & -m_{d,i} & m_{d,i+1} & \cdots & m_{d,d}
\end{pmatrix}.
\end{align*}
Since this equality holds for all $i \in \{2,\dots,d\}$, it follows that
\[
m_{jk}=0 \quad \text{for all } j\neq k,
\]
and hence $M$ must be a diagonal matrix.
Therefore, $M$ can be written as
\[
M=
\begin{pmatrix}
m_{1,1} & 0 & \cdots & 0 \\
0 & m_{2,2} & \cdots & 0 \\
\vdots & \vdots & \ddots & \vdots \\
0 & 0 & \cdots & m_{d,d}
\end{pmatrix}.
\]

Next, take $i,j \in \{2,\dots,d\}$ with $i \le j$, and let $Q$ be the permutation
matrix corresponding to the transposition $(i,j)$, i.e.,
\[
Q\coloneqq
\begin{pmatrix}
1 &        &        &        &        &        &        \\
  & \ddots &        &        &        &        &        \\
  &        & 0      &        & 1      &        &        \\
  &        &        & \ddots &        &        &        \\
  &        & 1      &        & 0      &        &        \\
  &        &        &        &        & \ddots &        \\
  &        &        &        &        &        & 1
\end{pmatrix}.
\]
The lower-right block of $Q$ is a $(d-1)\times(d-1)$ permutation matrix, and
hence belongs to $O(d-1)$.
Therefore, $Q \in \mathcal{S}$.

Using again the invariance $Q^\top M Q=M$, we obtain
\begin{align*}
M
&= Q^\top M Q \\
&= \diag(
m_{1,1},\dots,m_{i-1,i-1},
m_{j,j},
m_{i+1,i+1},\dots,m_{j-1,j-1},
m_{i,i},
m_{j+1,j+1},\dots,m_{d,d}
).
\end{align*}
Since this holds for arbitrary $i,j \in \{2,\dots,d\}$ with $i \le j$, we conclude
that
\[
m_{2,2}=m_{3,3}=\cdots=m_{d,d}.
\]

Defining $\alpha \coloneqq m_{1,1}$ and $\beta \coloneqq m_{2,2}$, we finally
obtain
\[
M=
\begin{pmatrix}
\alpha & 0 & \cdots & 0 \\
0 & \beta & \cdots & 0 \\
\vdots & \vdots & \ddots & \vdots \\
0 & 0 & \cdots & \beta
\end{pmatrix}
= (\alpha-\beta)e_1e_1^\top + \beta I_d.
\]

We now determine $\alpha$ and $\beta$.
First,
\begin{align*}
e_1^\top M e_1
&= \mathbb{E}[e_1^\top uu^\top e_1] \\
&= \mathbb{E}\!\left[e_1^\top \frac{PP^\top e_1 e_1^\top PP^\top}{\|P^\top e_1\|^2} e_1\right] \\
&= \mathbb{E}[\|P^\top e_1\|^2] \\
&= e_1^\top \mathbb{E}[PP^\top] e_1 \\
&= 1.
\end{align*}
On the other hand,
\[
e_1^\top M e_1 = \alpha,
\]
and hence $\alpha=1$.

Next, we compute the trace of $M$:
\begin{align*}
\operatorname{tr}(M)
&= \operatorname{tr}(\mathbb{E}[uu^\top]) \\
&= \mathbb{E}[\operatorname{tr}(uu^\top)] \\
&= \mathbb{E}[\|u\|^2] \\
&= \mathbb{E}\!\left[\frac{e_1^\top PP^\top PP^\top e_1}{\|P^\top e_1\|^2}\right] \\
&= \frac{d}{r}.
\end{align*}
On the other hand,
\begin{align*}
\operatorname{tr}(M)
&= \operatorname{tr}((1-\beta)e_1e_1^\top + \beta I_d) \\
&= (1-\beta) + \beta d.
\end{align*}
Equating the two expressions yields
\[
(d-1)\beta = \frac{d}{r} - 1,
\quad
\beta = \frac{d-r}{r(d-1)}.
\]

Therefore,
\[
M
= \frac{d(r-1)}{r(d-1)} e_1e_1^\top
+ \frac{d-r}{r(d-1)} I_d.
\]

Finally,
\begin{align*}
\mathbb{E}\!\left[\frac{v^\top PP^\top S PP^\top v}{\|P^\top v\|^2}\right]
&= \operatorname{tr}(M\hat S) \\
&= \operatorname{tr}\!\left(
\frac{d(r-1)}{r(d-1)} e_1e_1^\top \hat S
+ \frac{d-r}{r(d-1)} \hat S
\right) \\
&= \frac{d(r-1)}{r(d-1)} e_1^\top \hat S e_1
+ \frac{d-r}{r(d-1)} \operatorname{tr}(\hat S) \\
&= \frac{d(r-1)}{r(d-1)} \frac{v^\top S v}{\|v\|^2}
+ \frac{d-r}{r(d-1)} \operatorname{tr}(S),
\end{align*}
which completes the proof.
\end{proof}

\begin{theorem}[Restatement of \Cref{Lsmoothrnsgd}]
\label{app:Lsmoothrnsgd}
Suppose that $F$ satisfies \Cref{ass:lower,ass:Lsmooth} 
, and that the gradient  
satisfies \Cref{ass:unbiased,ass:pBCM}.
Let $\{x_k\}_{k\ge 0}$ be the sequence generated by the RS-NSGD iteration.
We set
\[
\bar{B} = \left\lceil \max\{1,\, B T^q\} \right\rceil,
\qquad
\bar{\eta} = \eta T^{-u},
\]
for parameters $B > 0$, $q > 0$, $\eta > 0$, and $u \in (0,1)$.
Then it holds that
\begin{align*}
\frac{1}{T} \sum_{k=0}^{T-1} \mathbb{E}[\|\nabla F(x_k)\|]
\le
\frac{\Delta_0}{\tau \eta T^{1-u}}
\\ + \frac{\eta \ell(\mathbb{L})\|\mathbb{L}\|}{2\tau T^u}
+ \frac{4\sigma}{\max\{1,BT^q\}^{(p-1)/p}}.
\end{align*}\end{theorem}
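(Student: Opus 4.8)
The plan is to run a single-step matrix-smoothness descent argument, take conditional expectations so that the drift becomes $-\bar{\eta}\tau\|\nabla F(x_k)\|$, and then telescope over $k$. Write $n_k := g_k-\nabla F(x_k)$, $\bar{G}_k := P_k^\top\nabla F(x_k)$, $\bar{g}_k := P_k^\top g_k$, and $\bar{n}_k := P_k^\top n_k = \bar{g}_k-\bar{G}_k$, and let $\mathcal{F}_k$ be the $\sigma$-field generated before drawing $P_k$ and $\{\xi_k^j\}$. Applying \eqref{onesided-Lplus} to the update $x_{k+1}-x_k=-\bar{\eta}\,P_k\bar{g}_k/\|\bar{g}_k\|$ gives the one-step bound
\[
F(x_{k+1})-F(x_k)\le -\bar{\eta}\,\frac{\langle \bar{G}_k,\bar{g}_k\rangle}{\|\bar{g}_k\|}+\frac{\bar{\eta}^2}{2}\,\frac{g_k^\top P_kP_k^\top\mathbb{L}P_kP_k^\top g_k}{\|P_k^\top g_k\|^2}.
\]
I would then control the two terms separately in expectation, exploiting that $P_k$ is drawn independently of the data.

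For the linear term the key elementary step is the normalized-direction inequality $\langle \bar{G}_k,\bar{g}_k\rangle/\|\bar{g}_k\|\ge\|\bar{G}_k\|-2\|\bar{n}_k\|$, which follows from $\langle \bar{G}_k,\bar{g}_k\rangle/\|\bar{g}_k\|=\|\bar{g}_k\|-\langle\bar{n}_k,\bar{g}_k\rangle/\|\bar{g}_k\|\ge\|\bar{g}_k\|-\|\bar{n}_k\|\ge\|\bar{G}_k\|-2\|\bar{n}_k\|$ using Cauchy--Schwarz and the reverse triangle inequality. Taking $\mathbb{E}[\cdot\mid\mathcal{F}_k]$ and invoking \Cref{prop:tau} (whose proof establishes the scalar identity $\mathbb{E}[\|P^\top x\|]=\tau\|x\|$) yields $\mathbb{E}[\|\bar{G}_k\|\mid\mathcal{F}_k]=\tau\|\nabla F(x_k)\|$ and, after a further conditioning on $n_k$, $\mathbb{E}[\|\bar{n}_k\|\mid\mathcal{F}_k]=\tau\,\mathbb{E}[\|n_k\|\mid\mathcal{F}_k]$. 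For the quadratic term, conditioning on $g_k$ makes it precisely of the form treated in \Cref{prop:secondterm} with $v=g_k$ and $S=\mathbb{L}$; bounding the Rayleigh quotient $g_k^\top\mathbb{L}g_k/\|g_k\|^2\le\|\mathbb{L}\|$ (as $\mathbb{L}\succeq 0$) and using $\operatorname{tr}(\mathbb{L})=r_{\mathrm{eff}}(\mathbb{L})\|\mathbb{L}\|$ collapses the expectation to $\ell(\mathbb{L})\|\mathbb{L}\|$, so the quadratic term is at most $\tfrac{\bar{\eta}^2}{2}\ell(\mathbb{L})\|\mathbb{L}\|$.

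The last ingredient is the mini-batch noise estimate $\mathbb{E}[\|n_k\|]\le 2\sigma/\bar{B}^{(p-1)/p}$. Because $p\in(1,2]$ may be strictly below $2$, the variance can be infinite and second-moment bounds are unavailable; instead I would apply a Hilbert-space von Bahr--Esseen-type $p$-th moment inequality to the independent mean-zero summands $\nabla f(x_k,\xi_k^j)-\nabla F(x_k)$ under \Cref{ass:pBCM}, giving $\mathbb{E}[\|n_k\|^p]\le 2\sigma^p/\bar{B}^{p-1}$, and then Jensen with $2^{1/p}\le 2$. Assembling the three estimates produces the per-step inequality $\bar{\eta}\tau\,\mathbb{E}[\|\nabla F(x_k)\|]\le\mathbb{E}[F(x_k)]-\mathbb{E}[F(x_{k+1})]+4\bar{\eta}\tau\sigma/\bar{B}^{(p-1)/p}+\tfrac{\bar{\eta}^2}{2}\ell(\mathbb{L})\|\mathbb{L}\|$. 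Summing over $k=0,\dots,T-1$, telescoping with $F(x_0)-\mathbb{E}[F(x_T)]\le\Delta_0$, dividing by $\bar{\eta}\tau T$, and substituting $\bar{\eta}=\eta T^{-u}$ together with $\bar{B}\ge\max\{1,BT^q\}$ yields exactly the claimed bound.

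The main obstacle is the linear term. Because normalization divides by the random scalar $\|P_k^\top g_k\|$, the projection $P_k$ and the noise $n_k$ enter nonlinearly and cannot be decoupled by a naive expectation, which is precisely why the tools for RS-SGD do not transfer directly. The decisive move is to apply the pointwise normalized-direction inequality \emph{before} any expectation, replacing the coupled quotient by the separable quantities $\|\bar{G}_k\|$ and $\|\bar{n}_k\|$; only then does independence of $P_k$ from the data allow \Cref{prop:tau} to pull the common factor $\tau$ out of each term cleanly, so that the $\tau$ from the signal and the $\tau$ from the noise cancel consistently in the final rate.
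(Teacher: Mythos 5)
Your proof is correct and follows the paper's overall skeleton---the matrix-smoothness one-step bound, \Cref{prop:secondterm} with $v=g_k$, $S=\mathbb{L}$ for the quadratic term, a $\tau$-identity for the drift, the $p$-BCM minibatch estimate $\mathbb{E}[\|n_k\|]\le 2\sigma/\bar{B}^{(p-1)/p}$, and telescoping---but you handle the drift term by a genuinely different, and in fact slightly more elementary, route. The paper first integrates out $P_k$ conditionally on $g_k$, invoking the vector identity of \Cref{prop:tau}, $\mathbb{E}\bigl[P_kP_k^\top g_k/\|P_k^\top g_k\|\mid g_k\bigr]=\tau\,g_k/\|g_k\|$, and only then applies the inequality $a^\top b/\|b\|\ge\|a\|-2\|a-b\|$ in $\mathbb{R}^d$ with $a=\nabla F(x_k)$, $b=g_k$. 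You instead apply that inequality pointwise in the projected space $\mathbb{R}^r$ (with $a=P_k^\top\nabla F(x_k)$, $b=P_k^\top g_k$) \emph{before} any expectation, and then need only the scalar identity $\mathbb{E}\|P^\top x\|=\tau\|x\|$ (equation \eqref{eq:Eg_final} inside the proof of \Cref{prop:tau}), applied once to $\nabla F(x_k)$ and once to $n_k$; both routes land on the identical intermediate bound $\tau\bigl(\|\nabla F(x_k)\|-2\,\mathbb{E}[\|n_k\|\mid\mathcal{F}_k]\bigr)$, so the $\tau$'s factor and cancel the same way. What your route buys: it bypasses the vector-valued statement of \Cref{prop:tau}---and hence the gradient--expectation interchange argument needed to prove it---relying only on the Beta-distribution moment computation; what the paper's route buys: conditioning on $g_k$ first makes the degenerate cases cleaner to organize. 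Two small points to patch in a full write-up: (i) when $\|P_k^\top g_k\|=0$ (a.s.\ equivalent to $g_k=0$) the quotient must be read as $0$ via the convention $G_k=0$, and your pointwise inequality still holds there since its right-hand side becomes $-\|P_k^\top\nabla F(x_k)\|\le 0$, but this needs to be stated; (ii) the integrability check (via $\|G_k\|\le\sqrt{d/r}$ and the induced linear growth of $\|\nabla F(x_k)\|$) that licenses the conditional-expectation manipulations, which the paper verifies explicitly. Finally, where the paper cites \citet{huebler2025} for the minibatch noise bound, your von Bahr--Esseen-plus-Jensen derivation is exactly the standard argument behind that citation, so nothing is lost there.
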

\begin{proof}
Define
\[
G_k \coloneqq
\begin{cases}
0, & \|P_k^\top g_k\|=0,\\[2mm]
\dfrac{P_kP_k^\top g_k}{\|P_k^\top g_k\|}, & \|P_k^\top g_k\|\neq 0,
\end{cases}
\qquad
x_{k+1}=x_k-\bar{\eta}G_k .
\]
We use the almost sure implications
\[
\|g_k\|=0 \ \Rightarrow\ \|P_k^\top g_k\|=0 \quad \text{a.s.},
\qquad
\|g_k\|\neq 0 \ \Rightarrow\ \|P_k^\top g_k\|\neq 0 \quad \text{a.s.}
\]
so that on $\{\|g_k\|\neq 0\}$ we have $G_k=\frac{P_kP_k^\top g_k}{\|P_k^\top g_k\|}$ a.s.,
and on $\{\|g_k\|=0\}$ we have $G_k=0$ a.s.

Since $F$ satisfies \Cref{ass:Lsmooth}, we have
\begin{align*}
F(x_{k+1})
&\le
F(x_k)
+ \nabla F(x_k)^\top (x_{k+1}-x_k)
+ \frac{1}{2}(x_{k+1}-x_k)^\top \mathbb{L}(x_{k+1}-x_k) \\
&=
F(x_k)
- \bar{\eta}\,\nabla F(x_k)^\top G_k
+ \frac{\bar{\eta}^2}{2}\,G_k^\top \mathbb{L} G_k .
\end{align*}

Rearranging both sides and summing over $k=0,\dots,T-1$, we obtain
\begin{align}
\bar{\eta}
\sum_{k=0}^{T-1}
\nabla F(x_k)^\top G_k
&\le
\sum_{k=0}^{T-1}\bigl(F(x_k)-F(x_{k+1})\bigr)
+ \sum_{k=0}^{T-1}
\frac{\bar{\eta}^2}{2}\,G_k^\top \mathbb{L} G_k
\notag\\
&=
F(x_0)-F(x_T)
+ \sum_{k=0}^{T-1}
\frac{\bar{\eta}^2}{2}\,G_k^\top \mathbb{L} G_k
\notag\\
&\le
F(x_0)-F_*
+ \sum_{k=0}^{T-1}
\frac{\bar{\eta}^2}{2}\,G_k^\top \mathbb{L} G_k
\notag\\
&\le
\Delta_0
+ \sum_{k=0}^{T-1}
\frac{\bar{\eta}^2}{2}\,G_k^\top \mathbb{L} G_k .
\label{eq:main_sum}
\end{align}

Let
\[
\mathcal{F}_k \coloneqq \sigma(g_0,P_0,\dots,g_{k-1},P_{k-1}),
\qquad
\mathcal{F}_k^+ \coloneqq \sigma(g_0,P_0,\dots,g_{k-1},P_{k-1},g_k),
\]
so that $\mathcal{F}_k \subset \mathcal{F}_k^+$.
By the tower property,
\begin{align}
\mathbb{E}\!\left[
G_k^\top \mathbb{L} G_k
\right]
&=
\mathbb{E}\!\left[
\mathbb{E}\!\left[
G_k^\top \mathbb{L} G_k
\;\middle|\;
\mathcal{F}_k^+
\right]
\right]
\notag\\
&\overset{(a)}{=}
\mathbb{E}\!\left[
\mathbf{1}_{\{\|g_k\|\neq 0\}}
\left(
\frac{d(r-1)}{r(d-1)}\frac{g_k^\top \mathbb{L} g_k}{\|g_k\|^2}
+\frac{d-r}{r(d-1)}\operatorname{tr}(\mathbb{L})
\right)
\right]
\notag\\
&\overset{(b)}{\leq}
\ell(\mathbb{L})\|\mathbb{L}\|.
\label{eq:Lmat_bnd}
\end{align}
Here, (a) follows from \Cref{app:prop:secondterm} on the event $\{\|g_k\|\neq0\}$
(using $\|g_k\|\neq 0 \Rightarrow \|P_k^\top g_k\|\neq 0$ a.s., hence
$G_k=\frac{P_kP_k^\top g_k}{\|P_k^\top g_k\|}$ a.s.), and from $G_k=0$ a.s.\ on
$\{\|g_k\|=0\}$.
For (b), note that on $\{\|g_k\|\neq0\}$,
\[
\frac{g_k^\top \mathbb{L} g_k}{\|g_k\|^2}\le \|\mathbb{L}\|,
\]
and the term is multiplied by $\mathbf{1}_{\{\|g_k\|\neq 0\}}\le 1$; together with
the definition of $\ell(\mathbb{L})$ this yields \eqref{eq:Lmat_bnd}.

To justify the application of the tower property, we verify integrability.
We have
\[
\bigl|\nabla F(x_k)^\top G_k\bigr|
\le
\|\nabla F(x_k)\|\,\|G_k\|.
\]
Moreover, if $\|P_k^\top g_k\|=0$ then $G_k=0$ and $\|G_k\|=0$.
If $\|P_k^\top g_k\|\neq0$, then
\[
\|G_k\|
=
\frac{\|P_kP_k^\top g_k\|}{\|P_k^\top g_k\|}
\le
\|P_k\|
\le
\sqrt{\frac{d}{r}},
\]
and hence in all cases,
\[
\bigl|\nabla F(x_k)^\top G_k\bigr|
\le
\sqrt{\frac{d}{r}}\|\nabla F(x_k)\|.
\]
Moreover, by smoothness,
\begin{align*}
\|\nabla F(x_{k+1})\|
&\le
\|\nabla F(x_k)\|
+ L\|x_{k+1}-x_k\| \\
&=
\|\nabla F(x_k)\|
+ \bar{\eta}L\|G_k\|
\le
\|\nabla F(x_k)\|
+ \bar{\eta}L\sqrt{\frac{d}{r}},
\end{align*}
and hence by induction,
\[
\|\nabla F(x_k)\|
\le
\|\nabla F(x_0)\| + k\bar{\eta}L\sqrt{\frac{d}{r}}.
\]
This implies integrability.

We may therefore apply the tower property to obtain
\begin{align}
\mathbb{E}\!\left[\nabla F(x_k)^\top G_k \mid \mathcal{F}_k\right]
=
\mathbb{E}\!\left[
\mathbb{E}\!\left[\nabla F(x_k)^\top G_k \mid \mathcal{F}_k^+\right]
\;\middle|\;
\mathcal{F}_k
\right].
\label{eq:tower_Gk}
\end{align}

We claim that,
\begin{align}
\mathbb{E}\!\left[\nabla F(x_k)^\top G_k \mid \mathcal{F}_k^+\right]
\ge
\tau\Bigl(\|\nabla F(x_k)\|-2\|g_k-\nabla F(x_k)\|\Bigr).
\label{eq:inner_descent_lb}
\end{align}
To prove this, we consider two cases.

\smallskip
\noindent\textbf{Case 1: $\|g_k\|=0$.}
Then $\|P_k^\top g_k\|=0$ almost surely, hence $G_k=0$ and thus
\[
\mathbb{E}\!\left[\nabla F(x_k)^\top G_k \mid \mathcal{F}_k^+\right]=0.
\]
On the other hand, since $g_k=0$,
\[
\|\nabla F(x_k)\|-2\|g_k-\nabla F(x_k)\|
=
\|\nabla F(x_k)\|-2\|\nabla F(x_k)\|
=
-\|\nabla F(x_k)\|.
\]
Therefore the right-hand side of \eqref{eq:inner_descent_lb} equals
$-\tau\|\nabla F(x_k)\|\le 0$, and the inequality holds.

\smallskip
\noindent\textbf{Case 2: $\|g_k\|\neq 0$.}
Then $\|P_k^\top g_k\|\neq 0$ almost surely, so
$G_k=\frac{P_kP_k^\top g_k}{\|P_k^\top g_k\|}$ almost surely, and by \Cref{app:prop:tau},
\[
\mathbb{E}\!\left[G_k\mid \mathcal{F}_k^+\right]
=
\tau\,\frac{g_k}{\|g_k\|}.
\]
Hence,
\begin{align*}
\mathbb{E}\!\left[\nabla F(x_k)^\top G_k \mid \mathcal{F}_k^+\right]
&=
\nabla F(x_k)^\top
\mathbb{E}\!\left[G_k\mid \mathcal{F}_k^+\right]
=
\tau\,\frac{\nabla F(x_k)^\top g_k}{\|g_k\|}.
\end{align*}
Moreover, for any $a,b\in\mathbb{R}^d$ with $b\neq 0$, the elementary inequality (see, e.g., \citet{huebler2025})
\[
\frac{a^\top b}{\|b\|}
\ge
\|a\|-2\|a-b\|
\]
holds. Applying it with $a=\nabla F(x_k)$ and $b=g_k$ yields
\[
\frac{\nabla F(x_k)^\top g_k}{\|g_k\|}
\ge
\|\nabla F(x_k)\|-2\|g_k-\nabla F(x_k)\|.
\]
Multiplying by $\tau$ proves \eqref{eq:inner_descent_lb} in this case.

\smallskip
\noindent
Therefore, combining \eqref{eq:tower_Gk} and \eqref{eq:inner_descent_lb}, we obtain
\begin{align}
\mathbb{E}\!\left[\nabla F(x_k)^\top G_k \mid \mathcal{F}_k\right]
&=
\mathbb{E}\!\left[
\mathbb{E}\!\left[\nabla F(x_k)^\top G_k \mid \mathcal{F}_k^+\right]
\;\middle|\;
\mathcal{F}_k
\right] \notag\\
&\ge
\tau\Bigl(
\|\nabla F(x_k)\|
-
2\,\mathbb{E}\!\left[\|g_k-\nabla F(x_k)\|\mid\mathcal{F}_k\right]
\Bigr).
\label{eq:descent_conditional_Fk}
\end{align}

Following the same argument as in \citet{huebler2025}, \Cref{ass:pBCM}
implies
\[
\mathbb{E}\!\left[\|g_k-\nabla F(x_k)\| \mid \mathcal{F}_k\right]
\le
\frac{2\sigma}{\bar{B}^{(p-1)/p}}.
\]
Substituting this bound into \eqref{eq:descent_conditional_Fk}, we obtain
\begin{align}
\mathbb{E}\!\left[\nabla F(x_k)^\top G_k \mid \mathcal{F}_k\right]
\ge
\tau\left(
\|\nabla F(x_k)\|
-
\frac{4\sigma}{\bar{B}^{(p-1)/p}}
\right).
\label{eq:descent_conditional_final}
\end{align}
Taking expectations of both sides and using the tower property, we obtain
\begin{align}
\mathbb{E}\!\left[\nabla F(x_k)^\top G_k\right]
\ge
\tau\left(
\mathbb{E}\!\left[\|\nabla F(x_k)\|\right]
-
\frac{4\sigma}{\bar{B}^{(p-1)/p}}
\right).
\label{eq:descent_unconditional}
\end{align}
Combining \eqref{eq:main_sum} and \eqref{eq:Lmat_bnd}, taking expectations,
and using \eqref{eq:descent_unconditional}, we obtain
\begin{align*}
\bar{\eta}\tau
\sum_{k=0}^{T-1}
\left(
\mathbb{E}[\|\nabla F(x_k)\|]
-
\frac{4\sigma}{\bar{B}^{(p-1)/p}}
\right)
\le
\Delta_0
+
\frac{\bar{\eta}^2\ell(\mathbb{L})\|\mathbb{L}\|T}{2}.
\end{align*}

Dividing both sides by $\bar{\eta}\tau T$, we obtain
\begin{align}
\frac{1}{T}
\sum_{k=0}^{T-1}
\mathbb{E}[\|\nabla F(x_k)\|]
\le
\frac{\Delta_0}{\bar{\eta}\tau T}
+ \frac{\bar{\eta}\,\ell(\mathbb{L})\|\mathbb{L}\|}{2\tau}
+ \frac{4\sigma}{\bar{B}^{(p-1)/p}}.
\label{eq:pre_subst}
\end{align}

Recall that $\bar{\eta}=\eta T^{-u}$ and
$\bar{B}=\lceil \max\{1,BT^q\}\rceil$.
Moreover, since
\[
\bar{B}
\ge
\max\{1,BT^q\},
\]
it follows that
\[
\bar{B}^{(p-1)/p}
\ge
\max\{1,BT^q\}^{(p-1)/p}.
\]
Substituting these relations into \eqref{eq:pre_subst}, we obtain
\[
\frac{1}{T}
\sum_{k=0}^{T-1}
\mathbb{E}[\|\nabla F(x_k)\|]
\le
\frac{\Delta_0}{\tau\eta T^{1-u}}
+ \frac{\eta\,\ell(\mathbb{L})\|\mathbb{L}\|}{2\tau T^u}
+ \frac{4\sigma}{\max\{1,BT^q\}^{(p-1)/p}}.
\]
\end{proof}

\paragraph{Oracle complexity (parameters, including $p$, are unknown).}
From \Cref{Lsmoothrnsgd}, taking $(u,q)=\bigl(\tfrac12,1\bigr)$ and treating $\eta$ and $B$ as constants,
the iteration complexity satisfies 
\[
\mathcal{O}\!\left(
\left(\frac{\Delta_0}{\tau \epsilon}\right)^{2}
+
\left(\frac{\ell(\mathbb{L})\,\|\mathbb{L}\|}{\tau \epsilon}\right)^{2}
+
\left(\frac{\sigma}{\epsilon}\right)^{\frac{p}{p-1}}
\right).
\]
Recalling that the oracle complexity is given by $r\bar{B}T$,
we obtain
\[
\mathcal{O}\!\left(
r\left(\frac{\Delta_0}{\tau \epsilon}\right)^{4}
+
r\left(\frac{\ell(\mathbb{L})\,\|\mathbb{L}\|}{\tau \epsilon}\right)^{4}
+
r\left(\frac{\sigma}{\epsilon}\right)^{\frac{2p}{p-1}}
\right).
\]

\paragraph{Oracle complexity (parameters, including $p$, are known).}
From \Cref{Lsmoothrnsgd}, we set $u=\tfrac{1}{2}$ and choose
\[
q=\frac{p}{2(p-1)},\qquad
\eta=\sqrt{\frac{\Delta_0}{\ell(\mathbb{L})\,\|\mathbb{L}\|}},\qquad
B=\left(
\frac{\sigma\,\tau}{\sqrt{\Delta_0\,\ell(\mathbb{L})\,\|\mathbb{L}\|}}
\right)^{\frac{p}{p-1}}.
\]
With these choices, the iteration complexity satisfies 
\[
\mathcal{O}\!\left(\left(
\frac{\sqrt{\ell(\mathbb{L})\,\Delta_0\,\|\mathbb{L}\|}}{\tau\epsilon}
\right)^{2}\right).
\]
Recalling that the oracle complexity is given by $r\bar{B}T$,
we obtain
\[
\mathcal{O}\!\left(
r\,\frac{\ell(\mathbb{L})}{\tau^2}\,
\frac{\Delta_0\,\|\mathbb{L}\|}{\epsilon^{2}}\,
\left(\frac{\sigma}{\epsilon}\right)^{\frac{p}{p-1}}
\right).
\]

\begin{lemma}\label{lem:init-grad-bound}
Suppose that $F$ satisfies \Cref{ass:lower,ass:Lsmooth}.
Then
\[
\|\nabla F(x_0)\|
\;\le\;
\sqrt{2 \Delta_0 \, L}.
\]
\end{lemma}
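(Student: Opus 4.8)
The plan is to combine the standard quadratic upper bound implied by $L$-smoothness with the lower boundedness of $F$, evaluating the bound at the explicit gradient step taken from $x_0$. First I would record the descent inequality: since $F$ is $L$-smooth (\Cref{ass:Lsmooth}), taking $\mathbb{L}=LI_d$ in \eqref{onesided-Lplus} gives, for all $x,y\in\mathbb{R}^d$,
\[
F(y)\le F(x)+\langle \nabla F(x),\,y-x\rangle+\frac{L}{2}\|y-x\|^2.
\]

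Next I would specialize this to $x=x_0$ and to the particular point $y=x_0-\tfrac{1}{L}\nabla F(x_0)$, which is exactly the minimizer of the quadratic right-hand side in $y$. Substituting and simplifying the inner product and squared-norm terms yields
\[
F\!\left(x_0-\tfrac{1}{L}\nabla F(x_0)\right)\le F(x_0)-\frac{1}{2L}\|\nabla F(x_0)\|^2.
\]
Then I would invoke \Cref{ass:lower}: since $F_\ast=\inf_{x}F(x)$ lower-bounds $F$ at every point, in particular at $y$, the left-hand side is at least $F_\ast$, so that
\[
F_\ast\le F(x_0)-\frac{1}{2L}\|\nabla F(x_0)\|^2.
\]
Rearranging and using the definition $\Delta_0=F(x_0)-F_\ast$ gives $\|\nabla F(x_0)\|^2\le 2L\Delta_0$, and taking square roots produces the claimed bound.

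There is no genuine obstacle here: this is a textbook consequence of smoothness together with lower boundedness. The only point requiring a moment of care is the justification of the quadratic upper bound from the stated hypotheses, which is immediate once one observes that $\mathbb{L}=LI_d$ is an admissible choice in \eqref{onesided-Lplus}; everything that follows is a one-line minimization of the right-hand side over $y$ and elementary algebra.
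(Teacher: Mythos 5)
Your proposal is correct and follows essentially the same argument as the paper: apply the quadratic upper bound from $L$-smoothness at the gradient step $y = x_0 - \tfrac{1}{L}\nabla F(x_0)$, lower-bound the left-hand side by $F_\ast$, and rearrange using $\Delta_0 = F(x_0) - F_\ast$. The only cosmetic difference is that you justify the quadratic bound by taking $\mathbb{L} = L I_d$ in \eqref{onesided-Lplus}, whereas the paper invokes the standard descent lemma directly from the Lipschitz-gradient condition; both are valid readings of \Cref{ass:Lsmooth}.
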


\begin{proof}
By $L$-smoothness, for any $x$ and $v$,
\[
F(x+v)
\le
F(x) + \langle \nabla F(x), v \rangle
+ \frac{L}{2} \|v\|^2 .
\]
Choosing $x=x_0$ and $v = - L^{-1} \nabla F(x_0)$ yields
\[
F_*
\le
F\!\left(x_0 - \frac{1}{L}\nabla F(x_0)\right)
\le
F(x_0) - \frac{1}{2L}\|\nabla F(x_0)\|^2 .
\]
Rearranging proves the claim.
\end{proof}

\begin{theorem}[Restatement of \Cref{high:minibatch-RNSGD-prob}]\label{app:high:minibatch-RNSGD-prob}
Suppose that $F$ satisfies \Cref{ass:lower,ass:Lsmooth}, and that the gradient
satisfies \Cref{ass:unbiased,ass:pBCM}.
Let $\{x_k\}_{k\ge 0}$ be the sequence generated by the RS-NSGD iteration.
We set
\[
\bar{B} = \left\lceil \max\{1,\, B T^q\} \right\rceil,
\qquad
\bar{\eta} = \eta T^{-u},
\]
for parameters $B > 0$, $q > 0$, $\eta > 0$, and $u \in (0,1)$.
Then, with probability at least $1 - \delta$, the following holds:
\begin{align*}
\frac{1}{T} \sum_{k=0}^{T-1} \|\nabla F(x_k)\|
\le \frac{2 \Delta_0}{\tau \eta T^{1-u}}
+ \frac{\frac{d}{r} \eta L}{\tau T^u} \big( 1 + 12\frac{\sqrt{\frac{d}{r}}}{\tau} \log (1/\delta) \big)
\\+ 17\frac{\frac{d}{r} \sqrt{\Delta_0 L}}{\tau^2 T} \log (1/\delta)
+ \frac{8 \sigma}{\max \{1, B T^q\}^{\frac{p-1}{p}}}.
\end{align*}
\end{theorem}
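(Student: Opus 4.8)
The plan is to mirror the in-expectation argument of \Cref{app:Lsmoothrnsgd} but to replace every expectation by a pathwise martingale-concentration step, exploiting crucially that the normalized update direction is bounded, $\|G_k\|\le\sqrt{d/r}$. First I would start from the $\mathbb{L}$-smoothness descent inequality (\Cref{ass:Lsmooth}), which after rearranging reads
\[
\bar{\eta}\,\nabla F(x_k)^\top G_k \le \bigl(F(x_k)-F(x_{k+1})\bigr) + \tfrac{\bar{\eta}^2}{2}\,G_k^\top\mathbb{L}G_k .
\]
I would then write $\nabla F(x_k)^\top G_k = \mathbb{E}[\nabla F(x_k)^\top G_k\mid\mathcal{F}_k] - D_k$, where $D_k$ is a martingale difference, and invoke the conditional lower bound \eqref{eq:descent_conditional_final} (which combines \Cref{app:prop:tau}, \Cref{ass:pBCM}, and the elementary inequality $a^\top b/\|b\|\ge\|a\|-2\|a-b\|$) to replace $\mathbb{E}[\nabla F(x_k)^\top G_k\mid\mathcal{F}_k]$ by $\tau\|\nabla F(x_k)\|-4\sigma\tau/\bar{B}^{(p-1)/p}$. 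Bounding the quadratic term pathwise by $G_k^\top\mathbb{L}G_k\le\|\mathbb{L}\|\,\|G_k\|^2\le (d/r)L$ and summing yields
\[
\bar{\eta}\tau\sum_{k=0}^{T-1}\|\nabla F(x_k)\| \le \Delta_0 + \tfrac{\bar{\eta}^2}{2}\tfrac{d}{r}LT + \tfrac{4\sigma\tau\bar{\eta}T}{\bar{B}^{(p-1)/p}} + \bar{\eta}\sum_{k=0}^{T-1}D_k ,
\]
so the entire problem reduces to a high-probability upper bound on $\sum_k D_k$.

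Next I would establish an a priori deterministic gradient ceiling. Since $\|x_{k+1}-x_k\|=\bar{\eta}\|G_k\|\le\bar{\eta}\sqrt{d/r}$, $L$-smoothness together with \Cref{lem:init-grad-bound} gives $\|\nabla F(x_k)\|\le G_{\max}:=\sqrt{2\Delta_0 L}+(T-1)\bar{\eta}L\sqrt{d/r}$ for every $k$; this fixed ceiling is what makes the martingale tractable. For the concentration I would build an exponential supermartingale (rather than use Azuma) in order to obtain a clean $\log(1/\delta)$ dependence. The two ingredients are that $D_k$ is range-bounded, $|D_k|\le 2\sqrt{d/r}\,\|\nabla F(x_k)\|$, and variance-controlled, $\mathbb{E}[D_k^2\mid\mathcal{F}_k]\le\|\nabla F(x_k)\|^2\,\mathbb{E}[\|G_k\|^2\mid\mathcal{F}_k]\le\tfrac{d}{r}\|\nabla F(x_k)\|^2$, where the last bound follows from \Cref{app:prop:secondterm} with $S=I_d$. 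Using $e^y\le 1+y+y^2$ for $|y|\le 1$, these give $\mathbb{E}[e^{\lambda D_k}\mid\mathcal{F}_k]\le\exp(\lambda^2\tfrac{d}{r}\|\nabla F(x_k)\|^2)$ whenever $\lambda\cdot 2\sqrt{d/r}\,G_{\max}\le 1$, so that $\exp\!\bigl(\lambda\sum_{j<k}D_j-\lambda^2\tfrac{d}{r}\sum_{j<k}\|\nabla F(x_j)\|^2\bigr)$ is a supermartingale, and Markov's inequality yields, with probability at least $1-\delta$,
\[
\sum_{k=0}^{T-1}D_k \le \frac{\log(1/\delta)}{\lambda} + \lambda\frac{d}{r}\sum_{k=0}^{T-1}\|\nabla F(x_k)\|^2 .
\]

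Then I would apply the self-bounding step $\sum_k\|\nabla F(x_k)\|^2\le G_{\max}\sum_k\|\nabla F(x_k)\|$ and choose $\lambda:=\tau r/(2dG_{\max})$. This choice simultaneously satisfies the validity constraint, since $\lambda\cdot 2\sqrt{d/r}\,G_{\max}=\tau\sqrt{r/d}\le 1$, and makes $\lambda\tfrac{d}{r}G_{\max}=\tfrac{\tau}{2}$, so the variance contribution becomes exactly $\tfrac{\tau}{2}\sum_k\|\nabla F(x_k)\|$ and can be absorbed into the left-hand side of the summed descent inequality. Moving that half-sum over, dividing by $\bar{\eta}\tau T/2$, and substituting $\bar{\eta}=\eta T^{-u}$, $\bar{B}\ge\max\{1,BT^q\}$, and the expression for $G_{\max}$ gives the claimed bound: the $\Delta_0$ term yields $\tfrac{2\Delta_0}{\tau\eta T^{1-u}}$, the pathwise quadratic term yields the $\tfrac{(d/r)\eta L}{\tau T^u}$ contribution, the noise term yields $\tfrac{8\sigma}{\max\{1,BT^q\}^{(p-1)/p}}$, and the two pieces of $G_{\max}$ (namely $\sqrt{2\Delta_0 L}$ and $\eta T^{1-u}L\sqrt{d/r}$) produce respectively the $\tfrac{(d/r)\sqrt{\Delta_0 L}}{\tau^2 T}\log(1/\delta)$ and the $\tfrac{(d/r)^{3/2}\eta L}{\tau^2 T^u}\log(1/\delta)$ terms, after collecting constants into the displayed $17$ and $12$.

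I expect the main obstacle to be the martingale step: the range and variance of $D_k$ are both random, scaling like $\|\nabla F(x_k)\|$ and $\|\nabla F(x_k)\|^2$, and the variance sum $\sum_k\|\nabla F(x_k)\|^2$ sits at the ``wrong'' power relative to the target quantity $\sum_k\|\nabla F(x_k)\|$. The resolution hinges on combining the deterministic ceiling $G_{\max}$ (to legitimize a single fixed $\lambda$ and to downgrade the squared sum to a linear one) with the precise choice of $\lambda$ that respects the MGF-validity window $\lambda\cdot 2\sqrt{d/r}\,G_{\max}\le 1$ while absorbing exactly half of the gradient sum; this is also where the extra factor $(d/r)^{3/2}$ and the linear $\log(1/\delta)$ (rather than $\sqrt{\log(1/\delta)}$) in the final bound originate.
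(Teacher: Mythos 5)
Your proposal is correct, and its overall architecture coincides with the paper's: the same descent inequality with the pathwise bound $\|G_k\|^2\le d/r$, the same conditional lower bound $\phi_k\ge\tau\bigl(\|\nabla F(x_k)\|-4\sigma/\bar{B}^{(p-1)/p}\bigr)$ imported from the in-expectation proof, the same martingale decomposition, the same gradient-growth ceiling combined with \Cref{lem:init-grad-bound}, and the same trick of choosing $\lambda$ so that the variance contribution is absorbed as $\tfrac{\tau}{2}\sum_k\|\nabla F(x_k)\|$ into the left-hand side. The one genuine difference is the concentration step. The paper verifies the conditional sub-Gaussian condition $\mathbb{E}[\exp(D_k^2/S_k^2)\mid\mathcal{F}_{k-1}]\le e$ with the random proxy $S_k=2\sqrt{d/r}\,\bar{\eta}\|\nabla F(x_{k-1})\|$ and invokes Lemma~11 of \citet{huebler2025}, which holds for \emph{every} $\lambda>0$ with constant $\tfrac34$; because $S_k$ appears in a denominator, the paper needs the regularization footnote ($S_k\mapsto S_k+\tfrac1n$, $n\to\infty$) to handle $\|\nabla F(x_{k-1})\|=0$. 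You instead build a Bernstein--Freedman exponential supermartingale from scratch, using the range bound $|D_k|\le 2\sqrt{d/r}\,G_{\max}$ and the conditional second moment, via $e^y\le 1+y+y^2$ for $y\le1$; this is valid only in the window $\lambda\cdot 2\sqrt{d/r}\,G_{\max}\le1$, and the nontrivial point — which you correctly check — is that your choice $\lambda=\tau r/(2dG_{\max})$ sits inside it precisely because $\tau\sqrt{r/d}\le1$. What each route buys: the paper's lemma needs no constraint on $\lambda$ but imports an external result and the regularization technicality; your argument is self-contained, avoids any division by a possibly-zero random quantity, and in fact yields smaller constants ($4\sqrt2$ and $4$ in place of the paper's $12\sqrt2\le17$ and $12$), so the stated bound follows a fortiori.
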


\begin{proof}
Define the filtrations
\[
\mathcal{F}_k \coloneqq \sigma(g_0,P_0,\ldots,g_{k-1},P_{k-1}),
\qquad
\mathcal{F}_k^+ \coloneqq \sigma(g_0,P_0,\ldots,g_{k-1},P_{k-1},g_k).
\]

Define the normalized projected direction
\[
G_k \coloneqq
\begin{cases}
0, & \|P_k^\top g_k\|=0,\\[2mm]
\dfrac{P_kP_k^\top g_k}{\|P_k^\top g_k\|}, & \|P_k^\top g_k\|\neq 0,
\end{cases}
\]
and the RS-NSGD update
\[
x_{k+1} = x_k - \bar{\eta} G_k.
\]
Define
\[
\psi_k \coloneqq \nabla F(x_k)^\top G_k,
\qquad
\phi_k \coloneqq \mathbb{E}[\psi_k\mid\mathcal{F}_k].
\]

We use the following almost sure implications:
\[
\|g_k\|=0 \ \Rightarrow\ \|P_k^\top g_k\|=0 \quad \text{a.s.},
\qquad
\|g_k\|\neq 0 \ \Rightarrow\ \|P_k^\top g_k\|\neq 0 \quad \text{a.s.}
\]

By $L$-smoothness,
\[
F(x_{k+1})
\le
F(x_k)
-\bar{\eta}\nabla F(x_k)^\top G_k
+\frac{L}{2}\bar{\eta}^2\|G_k\|^2.
\]
Summing this inequality for $k=0,\ldots,T-1$ and using $F(x_T)\ge F_*$ yields
\begin{align}
\bar{\eta}\sum_{k=0}^{T-1}\psi_k
\le
\Delta_0
+
\frac{L}{2}\bar{\eta}^2\sum_{k=0}^{T-1}\|G_k\|^2.
\label{eq:descent-basic}
\end{align}

If $\|P_k^\top g_k\|\neq 0$, then
\[
\|G_k\|
=
\frac{\|P_kP_k^\top g_k\|}{\|P_k^\top g_k\|}
\le
\|P_k\|
\le
\sqrt{\frac{d}{r}}.
\]
If $\|P_k^\top g_k\|=0$, then $G_k=0 \leq \sqrt{\frac{d}{r}}$.
Therefore $\|G_k\|^2\le \frac{d}{r}$ for all $k$, and
\begin{align}
\bar{\eta}\sum_{k=0}^{T-1}\psi_k
\le
\Delta_0
+
\frac{\bar{\eta}^2L}{2}\frac{d}{r}T.
\label{eq:descent-NSGD}
\end{align}

Define $D_0\coloneqq 0$ and, for $k\ge1$,
\[
D_k \coloneqq -\bar{\eta}(\psi_{k-1}-\phi_{k-1}).
\]
Then $(D_k,\mathcal{F}_k)$ is a martingale difference sequence.
Let
\[
S_k \coloneqq 2\sqrt{\frac{d}{r}}\,\bar{\eta}\,\|\nabla F(x_{k-1})\|,
\qquad k\ge1.
\]
Then $S_k$ is $\mathcal{F}_{k-1}$-measurable. Moreover,
\[
|\psi_{k-1}|
\le
\|\nabla F(x_{k-1})\|\,\|G_{k-1}\|
\le
\sqrt{\frac{d}{r}}\|\nabla F(x_{k-1})\|,
\]
which also implies
$|\phi_{k-1}|\le \sqrt{\frac{d}{r}}\|\nabla F(x_{k-1})\|$.
Hence
\[
\exp\!\left(\frac{D_k^2}{S_k^2}\right)
=
\exp\!\left(
\frac{(\psi_{k-1}-\phi_{k-1})^2}
{4\frac{d}{r}\|\nabla F(x_{k-1})\|^2}
\right)
\le
\exp\!\left(
\frac{(|\psi_{k-1}|+|\phi_{k-1}|)^2}
{4\frac{d}{r}\|\nabla F(x_{k-1})\|^2}
\right)
\le
e.
\]

By Lemma~11 of \cite{huebler2025}\footnote{
To avoid division by zero and to make the argument rigorous,
one may regularize the denominator by replacing
$S_k$ with $S_k + \tfrac{1}{n}$ for $n\in\mathbb{N}$.
This yields a monotone decreasing sequence of events indexed by $n$,
to which Lemma~11 applies for each fixed $n$.
Letting $n\to\infty$ and using the continuity of probability
yields the stated inequality.
}, for any $\lambda>0$ and any $\delta\in(0,1)$,
with probability at least $1-\delta$,
\begin{align}
\sum_{k=1}^T D_k
\le
\frac{3}{4}\lambda\sum_{k=1}^T S_k^2
+
\frac{1}{\lambda}\log\frac{1}{\delta}.
\label{eq:hp-mds}
\end{align}

Combining \eqref{eq:hp-mds} with \eqref{eq:descent-NSGD} yields
\begin{align}
\bar{\eta}\sum_{k=0}^{T-1}
\Bigl(
\phi_k
-
3\frac{d}{r}\lambda\bar{\eta}\|\nabla F(x_k)\|^2
\Bigr)
\le
\Delta_0
+
\frac{\bar{\eta}^2L}{2}\frac{d}{r}T
+
\frac{1}{\lambda}\log\frac{1}{\delta}.
\label{eq:combined}
\end{align}

Let $C_T\coloneqq \bar{\eta}^2T$ and choose
\[
\lambda
\coloneqq
\frac{\tau}{
6\bigl(
\bar{\eta}\frac{d}{r}\|\nabla F(x_0)\|
+
C_TL\bigl(\tfrac{d}{r}\bigr)^{3/2}
\bigr)
}.
\]
Using the gradient growth bound
\[
\|\nabla F(x_k)\|
\le
\|\nabla F(x_0)\|
+
k\bar{\eta}L\sqrt{\frac{d}{r}},
\]
we obtain
\begin{align*}
3\frac{d}{r}\lambda\bar{\eta}\|\nabla F(x_k)\|^2
&=
\frac{\tau 3\frac{d}{r}\bar{\eta}\|\nabla F(x_k)\|^2}
{6\bigl(\bar{\eta}\frac{d}{r}\|\nabla F(x_0)\|
+ C_TL(\frac{d}{r})^{3/2}\bigr)} \\
&\overset{(b)}{\le}
\frac{\tau \frac{d}{r}\bar{\eta}
(\|\nabla F(x_0)\|+k\bar{\eta}L\sqrt{\frac{d}{r}})}
{2\bigl(\bar{\eta}\frac{d}{r}\|\nabla F(x_0)\|
+ C_TL(\frac{d}{r})^{3/2}\bigr)}
\|\nabla F(x_k)\| \\
&\overset{(c)}{\le}
\frac{\tau}{2}\|\nabla F(x_k)\|.
\end{align*}
Here, $(b)$ uses
\[
\|\nabla F(x_k)\|^2
\le
\bigl(\|\nabla F(x_0)\|
+
k\bar{\eta}L\sqrt{\tfrac{d}{r}}\bigr)
\|\nabla F(x_k)\|,
\]
and $(c)$ follows from
\[
\frac{d}{r}\bar{\eta}k\bar{\eta}L\sqrt{\tfrac{d}{r}}
\le
C_TL\bigl(\tfrac{d}{r}\bigr)^{3/2}.
\]

As shown in \eqref{eq:descent_conditional_final} of \Cref{app:Lsmoothrnsgd},
\[
\phi_k
\ge
\tau\!\left(
\|\nabla F(x_k)\|
-
\frac{4\sigma}{\bar{B}^{\frac{p-1}{p}}}
\right).
\]
Substituting these bounds into \eqref{eq:combined} and multiplying both sides by
$\frac{2}{\tau\bar{\eta}T}$ yield
\begin{align*}
\frac{1}{T}\sum_{k=0}^{T-1}\|\nabla F(x_k)\|
\le
\frac{2\Delta_0}{\tau\bar{\eta}T}
+
\frac{\bar{\eta}L}{\tau}\frac{d}{r}
+
\frac{12\frac{d}{r}\|\nabla F(x_0)\|}{\tau^2T}\log\frac{1}{\delta}
+
\frac{12\bar{\eta}L}{\tau^2}
\Bigl(\frac{d}{r}\Bigr)^{3/2}
\log\frac{1}{\delta}
+
\frac{8\sigma}{\bar{B}^{\frac{p-1}{p}}}.
\end{align*}

Finally, using \Cref{lem:init-grad-bound},
$\|\nabla F(x_0)\|\le \sqrt{2\Delta_0L}$,
and $12\sqrt{2}\le17$, together with the definitions of
$\bar{\eta}$ and $\bar{B}$, the stated bound follows.
\end{proof}

\paragraph{Oracle complexity (parameters, including $p$, are unknown).}
From \Cref{high:minibatch-RNSGD-prob}, taking $(u,q)=\bigl(\tfrac12,1\bigr)$ and treating $\eta$ and $B$ as constants,
the iteration complexity satisfies 
\[
\tilde{\mathcal{O}}\!\left(
\left(\frac{\Delta_0}{\tau \epsilon}\right)^{2}
+
\left(\frac{\left(\frac{d}{r}\right)^{\frac{3}{2}}L}{\tau^2 \epsilon}\right)^{2}
+
\left(\frac{\sigma}{\epsilon}\right)^{\frac{p}{p-1}}
\right).
\]
Recalling that the oracle complexity is given by $r\bar{B}T$,
we obtain
\[
\tilde{\mathcal{O}}\!\left(
  r\left(\frac{\Delta_0}{\tau\epsilon}\right)^4
  +
  \frac{d^6}{r^5}\left(\frac{L}{\tau^2 \epsilon}\right)^4
  +
  r\left(\frac{\sigma}{\epsilon}\right)^{\frac{2p}{p-1}}
\right).
\]

\paragraph{Oracle complexity (parameters, including $p$, are known).}
From \Cref{high:minibatch-RNSGD-prob}, we set $u=\tfrac{1}{2}$ and choose
\[
q=\frac{p}{2(p-1)},\qquad
\eta=\left(\frac{r}{d}\right)^{\frac{3}{4}}
\sqrt{\frac{\Delta_0 \tau}{L}},\qquad
B=\left(
\frac{\sigma \tau^{\frac{3}{2}}}{\sqrt{\Delta_0 L}}
\left(\frac{r}{d}\right)^{\frac{3}{4}}
\right)^{\frac{p}{p-1}}.
\]
With these choices, the iteration complexity satisfies 
\[
\tilde{\mathcal{O}}\!\left(
  \left(\frac{\left(\frac{d}{r}\right)^{\frac{3}{4}}\sqrt{\Delta_0L}}{\tau^{\frac{3}{2}} \epsilon}\right)^{2}
\right).
\]

Recalling that the oracle complexity is given by $r\bar{B}T$,
we obtain
\[
\tilde{\mathcal{O}}\!\left(
  \left(\frac{d}{r^{1/3}\tau^2}\right)^{3/2}
  \frac{\Delta_0 L}{\epsilon^2}
  \left(\frac{\sigma}{\epsilon}\right)^{\frac{p}{p-1}}
\right).
\]

\section{RS-NGD: Update Rule and Convergence Analysis}
\label{app:rngd}
\paragraph{Randomized Subspace Normalized Gradient Descent (RS-NGD).}
At each iteration $k$, we sample $P_k \in \mathbb{R}^{d\times r}$ as an i.i.d. Haar matrix,
and perform the update
\begin{align*}
x_{k+1} &= x_k - \bar{\eta}\, G_k,\\
G_k &\coloneqq
\begin{cases}
\dfrac{P_kP_k^\top \nabla F(x_k)}{\|P_k^\top \nabla F(x_k)\|},
& \text{if } \|P_k^\top \nabla F(x_k)\|\neq 0,\\[2mm]
0,
& \text{if } \|P_k^\top \nabla F(x_k)\|=0.
\end{cases}
\end{align*}

\begin{theorem}\label{thm:RNGD_exp}
Assume that $F$ satisfies \Cref{ass:lower,ass:Lsmooth}.
Let $\{x_k\}_{k\ge0}$ be generated by RS-NGD from $x_0$.
We set
\[
\bar{\eta} = \eta T^{-u},
\]
for parameters $\eta > 0$, and $u \in (0,1)$.
Then, for all $T\ge1$,
\[
 \frac{1}{T}\sum_{k=0}^{T-1} \mathbb{E}\!\left[\|\nabla F(x_k)\|\right]
 \le
 \frac{\Delta_0 }{\eta \tau T^{1-u}}
 +\frac{\eta \ell(\mathbb{L})\|\mathbb{L}\|}{2\tau T^u}.
\]
\end{theorem}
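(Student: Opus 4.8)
The plan is to follow the structure of the in-expectation RS-NSGD analysis (\Cref{app:Lsmoothrnsgd}), specialized to the noise-free setting in which the exact gradient $\nabla F(x_k)$ replaces the stochastic gradient $g_k$. The absence of noise should eliminate the $\sigma$-term entirely and turn the first-order lower bound into an exact identity. First I would invoke the matrix-smoothness inequality \eqref{onesided-Lplus} with $x=x_k$ and $y=x_{k+1}=x_k-\bar{\eta}G_k$ to get
\[
F(x_{k+1}) \le F(x_k) - \bar{\eta}\,\nabla F(x_k)^\top G_k + \frac{\bar{\eta}^2}{2}\,G_k^\top \mathbb{L}\,G_k.
\]
Summing over $k=0,\dots,T-1$, telescoping, and using $F(x_T)\ge F_*$ together with $\Delta_0=F(x_0)-F_*$ yields
\[
\bar{\eta}\sum_{k=0}^{T-1}\nabla F(x_k)^\top G_k \le \Delta_0 + \frac{\bar{\eta}^2}{2}\sum_{k=0}^{T-1} G_k^\top \mathbb{L}\,G_k.
\]

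Next I would condition on $\mathcal{F}_k:=\sigma(P_0,\dots,P_{k-1})$, with respect to which $x_k$ and hence $\nabla F(x_k)$ are measurable while $P_k$ is a fresh Haar matrix independent of the past; this is precisely the setting of \Cref{app:prop:tau,app:prop:secondterm}. For the first-order term, on $\{\nabla F(x_k)\neq 0\}$ Haar property (iv) gives $\|P_k^\top\nabla F(x_k)\|\neq 0$ almost surely, so $G_k=P_kP_k^\top\nabla F(x_k)/\|P_k^\top\nabla F(x_k)\|$, and \Cref{app:prop:tau} with $x=\nabla F(x_k)$ gives $\mathbb{E}[G_k\mid\mathcal{F}_k]=\tau\,\nabla F(x_k)/\|\nabla F(x_k)\|$; hence $\mathbb{E}[\nabla F(x_k)^\top G_k\mid\mathcal{F}_k]=\tau\|\nabla F(x_k)\|$, while both sides vanish on $\{\nabla F(x_k)=0\}$ where $G_k=0$ by definition. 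The crucial point is that, unlike the stochastic case, this identity is exact, with no $-2\|g_k-\nabla F(x_k)\|$ correction. For the quadratic term, \Cref{app:prop:secondterm} with $v=\nabla F(x_k)$, $S=\mathbb{L}$, combined with $\nabla F(x_k)^\top\mathbb{L}\nabla F(x_k)/\|\nabla F(x_k)\|^2\le\|\mathbb{L}\|$ and $\operatorname{tr}(\mathbb{L})=r_{\mathrm{eff}}(\mathbb{L})\|\mathbb{L}\|$, gives $\mathbb{E}[G_k^\top\mathbb{L}\,G_k\mid\mathcal{F}_k]\le\ell(\mathbb{L})\|\mathbb{L}\|$ directly from the definition of $\ell$.

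Finally, taking total expectations and combining the two conditional bounds gives
\[
\bar{\eta}\,\tau\sum_{k=0}^{T-1}\mathbb{E}[\|\nabla F(x_k)\|] \le \Delta_0 + \frac{\bar{\eta}^2}{2}\,\ell(\mathbb{L})\|\mathbb{L}\|\,T,
\]
and dividing by $\bar{\eta}\tau T$ and substituting $\bar{\eta}=\eta T^{-u}$ produces the claimed bound. The steps requiring the most care are the measurability bookkeeping (confirming that $\nabla F(x_k)$ is $\mathcal{F}_k$-measurable while $P_k$ is independent, so the propositions apply conditionally) and the integrability needed to justify the conditional identities and the interchange with expectation; both are handled exactly as in \Cref{app:Lsmoothrnsgd}, using the uniform bound $\|G_k\|\le\sqrt{d/r}$ and the linear growth $\|\nabla F(x_k)\|\le\|\nabla F(x_0)\|+k\bar{\eta}L\sqrt{d/r}$. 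Since no noise term appears, I do not expect a genuine obstacle beyond this routine bookkeeping: the result is the clean noise-free specialization of the RS-NSGD argument, which is precisely why the $\sigma$-term of \Cref{app:Lsmoothrnsgd} is absent here.
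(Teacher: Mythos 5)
Your proposal is correct and follows essentially the same route as the paper's proof: the matrix-smoothness descent inequality, telescoping with $F(x_T)\ge F_*$, the Haar expectation results (\Cref{app:prop:tau,app:prop:secondterm}) for the first-order and quadratic terms respectively, and division by $\bar{\eta}\tau T$. The only cosmetic difference is that the paper first exploits the pointwise identity $\nabla F(x_k)^\top G_k=\|P_k^\top\nabla F(x_k)\|$ and then applies $\mathbb{E}\bigl[\|P_k^\top\nabla F(x_k)\|\mid\mathcal{F}_k\bigr]=\tau\|\nabla F(x_k)\|$, whereas you condition on $\mathcal{F}_k$ and use $\mathbb{E}[G_k\mid\mathcal{F}_k]=\tau\,\nabla F(x_k)/\|\nabla F(x_k)\|$ directly; both facts come from \Cref{app:prop:tau} and yield the same bound.
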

\begin{proof}
Let $\mathcal{F}_k \coloneqq \sigma(P_0,\dots,P_{k-1})$.
Note that $x_k$ is $\mathcal{F}_k$-measurable.

By \Cref{ass:Lsmooth}, for any $x,v$,
\[
F(x+v)
\le
F(x)+\langle \nabla F(x),v\rangle+\frac{1}{2}v^\top\mathbb{L}v .
\]
Applying this with $x=x_k$ and $v=x_{k+1}-x_k=-\bar{\eta}\,G_k$, we obtain
\begin{align*}
F(x_{k+1})
&\le
F(x_k)-\bar{\eta}\,\langle \nabla F(x_k),G_k\rangle
+\frac{\bar{\eta}^2}{2}\,G_k^\top\mathbb{L}G_k.
\end{align*}
By the definition of $G_k$, on $\{\|P_k^\top\nabla F(x_k)\|\neq0\}$ we have
$G_k=\frac{P_kP_k^\top\nabla F(x_k)}{\|P_k^\top\nabla F(x_k)\|}$, hence
\[
\langle \nabla F(x_k),G_k\rangle
=
\frac{\nabla F(x_k)^\top P_kP_k^\top \nabla F(x_k)}{\|P_k^\top \nabla F(x_k)\|}
=
\|P_k^\top \nabla F(x_k)\|.
\]
On $\{\|P_k^\top\nabla F(x_k)\|=0\}$ we have $G_k=0$, and thus both sides equal $0$.
Therefore, for all outcomes,
\[
\langle \nabla F(x_k),G_k\rangle=\|P_k^\top\nabla F(x_k)\|.
\]
Substituting this identity yields
\[
F(x_{k+1})
\le
F(x_k)-\bar{\eta}\,\|P_k^\top\nabla F(x_k)\|
+\frac{\bar{\eta}^2}{2}\,G_k^\top\mathbb{L}G_k.
\]
Rearranging gives
\[
\bar{\eta}\,\|P_k^\top\nabla F(x_k)\|
\le
F(x_k)-F(x_{k+1})
+\frac{\bar{\eta}^2}{2}\,G_k^\top\mathbb{L}G_k.
\]
Summing over $k=0,\dots,T-1$ yields
\begin{align}
\bar{\eta}\sum_{k=0}^{T-1}\|P_k^\top\nabla F(x_k)\|
&\le
\Delta_0
+\frac{\bar{\eta}^2}{2}\sum_{k=0}^{T-1}G_k^\top\mathbb{L}G_k .
\label{eq:sum_rngd}
\end{align}

Next, the Haar property in \Cref{app:prop:tau}, in particular
\eqref{eq:Eg_final}, implies that
\[
\mathbb{E}\!\left[\|P_k^\top\nabla F(x_k)\|\mid \mathcal{F}_k\right]
=
\tau\,\|\nabla F(x_k)\|.
\]
Taking expectations and using the tower property yields
\[
\mathbb{E}\!\left[\|P_k^\top\nabla F(x_k)\|\right]
=
\tau\,\mathbb{E}\!\left[\|\nabla F(x_k)\|\right].
\]

Moreover, by \Cref{app:prop:secondterm} and the tower property,
\[
\mathbb{E}\!\left[G_k^\top\mathbb{L}G_k\right]
=
\mathbb{E}\!\left[
\mathbb{E}\!\left[G_k^\top\mathbb{L}G_k \mid \mathcal{F}_k\right]
\right]
\le
\ell(\mathbb{L})\,\|\mathbb{L}\|.
\]

Taking expectations in \eqref{eq:sum_rngd} and combining the above bounds,
we obtain
\[
\bar{\eta}\tau \sum_{k=0}^{T-1}\mathbb{E}\!\left[\|\nabla F(x_k)\|\right]
\le
\Delta_0
+\frac{\bar{\eta}^2}{2}\,\ell(\mathbb{L})\,\|\mathbb{L}\|\,T.
\]
Dividing both sides by $\bar{\eta}\tau T$ completes the proof.
\end{proof}

We next establish a high-probability convergence bound for RS-NGD.

\begin{theorem}
\label{thm:RNGD_hp}
Assume that $F$ satisfies \Cref{ass:lower,ass:Lsmooth}.
Let $\{x_k\}_{k\ge0}$ be generated by RS-NGD from $x_0$.
Fix $T\ge1$ and set
\[
\bar{\eta}=\eta T^{-u},
\qquad
\text{for parameters }\eta>0,\ u\in(0,1).
\]
Then, for any $\delta\in(0,1)$, with probability at least $1-\delta$,
\begin{align*}
\frac{1}{T}\sum_{k=0}^{T-1}\|\nabla F(x_k)\|
\le\;
\frac{2\Delta_0}{\eta\tau T^{1-u}}
+\frac{\eta}{\tau T^{u}}\frac{d}{r}L
+\frac{3}{\tau^2}\left(
\frac{d}{r}\frac{\sqrt{2\Delta_0L}}{T}
+\frac{\eta}{T^{u}}\Bigl(\frac{d}{r}\Bigr)^{3/2}L
\right)\log\frac{1}{\delta}.
\end{align*}
\end{theorem}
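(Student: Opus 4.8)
The plan is to mirror the high-probability argument for RS-NSGD in \Cref{app:high:minibatch-RNSGD-prob}, but to exploit the fact that the deterministic setting (projected \emph{exact} gradients, no noise) both removes the $\sigma$-term and yields a tighter martingale variance proxy. First I would fix the filtration $\mathcal{F}_k \coloneqq \sigma(P_0,\dots,P_{k-1})$ and repeat the $L$-smoothness descent step from \Cref{thm:RNGD_exp}, using the identity $\langle\nabla F(x_k),G_k\rangle = \|P_k^\top\nabla F(x_k)\|$ together with the deterministic quadratic bound $\|G_k\|^2\le d/r$, so that $G_k^\top\mathbb{L}G_k \le \|\mathbb{L}\|\,\tfrac{d}{r} \le L\,\tfrac{d}{r}$. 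Summing over $k$ gives the pathwise inequality
\[
\bar{\eta}\sum_{k=0}^{T-1}\|P_k^\top\nabla F(x_k)\|
\le \Delta_0 + \frac{\bar{\eta}^2 L}{2}\frac{d}{r}T ,
\]
which already accounts for the $\tfrac{d}{r}L$ term in the final bound.

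Next I would set $\psi_k \coloneqq \|P_k^\top\nabla F(x_k)\|$ and, by \Cref{app:prop:tau} (specifically $\mathbb{E}[\|P^\top x\|]=\tau\|x\|$), identify $\phi_k \coloneqq \mathbb{E}[\psi_k\mid\mathcal{F}_k] = \tau\|\nabla F(x_k)\|$. Forming the martingale difference sequence $D_k \coloneqq -\bar{\eta}(\psi_{k-1}-\phi_{k-1})$, the crucial observation distinguishing this from the stochastic case is that $\psi_{k-1}\ge 0$, so both $\psi_{k-1}$ and $\phi_{k-1}$ lie in $[0,\sqrt{d/r}\,\|\nabla F(x_{k-1})\|]$; hence $|D_k|\le S_k \coloneqq \bar{\eta}\sqrt{d/r}\,\|\nabla F(x_{k-1})\|$ \emph{without} the factor $2$ present in RS-NSGD (where $\psi_k$ can change sign). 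This gives $\exp(D_k^2/S_k^2)\le e$, so Lemma~11 of \cite{huebler2025} applies: for any $\lambda>0$, with probability at least $1-\delta$,
\[
\sum_{k=1}^{T} D_k \le \frac{3}{4}\lambda\sum_{k=1}^{T} S_k^2 + \frac{1}{\lambda}\log\frac{1}{\delta} .
\]

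Using $\sum_{k=1}^T D_k = -\bar{\eta}\sum_{k=0}^{T-1}(\psi_k-\phi_k)$ converts the descent bound's left-hand side into $\bar{\eta}\tau\sum_k\|\nabla F(x_k)\|$ plus the quadratic remainder $\tfrac34\lambda\sum_k S_k^2 = \tfrac34\lambda\bar{\eta}^2\tfrac{d}{r}\sum_k\|\nabla F(x_k)\|^2$. I expect this self-referential quadratic term to be the main obstacle, and I would resolve it by absorbing it into half of the linear term: bounding the gradients uniformly via the growth estimate $\|\nabla F(x_k)\|\le\|\nabla F(x_0)\|+k\bar{\eta}L\sqrt{d/r}$ (from $L$-smoothness and $\|G_k\|\le\sqrt{d/r}$), I would choose $\lambda \coloneqq \tau\big/\bigl[\tfrac32\tfrac{d}{r}\bar{\eta}\bigl(\|\nabla F(x_0)\|+T\bar{\eta}L\sqrt{d/r}\bigr)\bigr]$ so that $\tfrac34\lambda\bar{\eta}\tfrac{d}{r}\|\nabla F(x_k)\|\le\tfrac{\tau}{2}$ for every $k$. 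After absorption the surviving left-hand side is $\tfrac12\bar{\eta}\tau\sum_k\|\nabla F(x_k)\|$; dividing by $\tfrac12\bar{\eta}\tau T$, substituting $\bar{\eta}=\eta T^{-u}$, and replacing $\|\nabla F(x_0)\|$ by $\sqrt{2\Delta_0 L}$ via \Cref{lem:init-grad-bound} yields the claimed bound, with the $\tfrac{3}{\tau^2}$ constant emerging precisely from the tighter, factor-one variance proxy available in the noiseless regime.
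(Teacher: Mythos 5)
Your proposal is correct and follows essentially the same route as the paper's proof: the same pathwise descent inequality, the same martingale difference sequence with the factor-one variance proxy $S_k=\bar{\eta}\sqrt{d/r}\,\|\nabla F(x_{k-1})\|$ (exploiting $\psi_k\ge 0$), the same application of Lemma~11 of \cite{huebler2025}, the identical choice of $\lambda$ for absorbing the quadratic term into half the linear term, and the same final substitutions via \Cref{lem:init-grad-bound}. The only cosmetic difference is that you route the quadratic term through the matrix bound $G_k^\top\mathbb{L}G_k\le\|\mathbb{L}\|\tfrac{d}{r}\le L\tfrac{d}{r}$ whereas the paper invokes scalar $L$-smoothness directly, which yields the same term.
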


\begin{proof}
Let $\mathcal{F}_k\coloneqq\sigma(P_0,\dots,P_{k-1})$.

By \Cref{ass:Lsmooth}, for any $x,v$,
\[
F(x+v)\le F(x)+\langle \nabla F(x),v\rangle+\frac12 L\|v\|^2 .
\]
Applying this with $x=x_k$ and $v=x_{k+1}-x_k=-\bar{\eta}G_k$, we obtain
\begin{align*}
F(x_{k+1})
&\le
F(x_k)-\bar{\eta}\,\langle \nabla F(x_k),G_k\rangle
+\frac{\bar{\eta}^2}{2}\,L\|G_k\|^2 .
\end{align*}

By the same argument as in the proof of \Cref{thm:RNGD_exp}, we have
\[
\langle \nabla F(x_k), G_k \rangle
=
\|P_k^\top \nabla F(x_k)\|.
\]
Moreover, since $\|P_k\|\le\sqrt{d/r}$, it follows that
$\|G_k\|\le\sqrt{d/r}$ and hence
\[
L\|G_k\|^2 \le L\frac{d}{r}.
\]
Substituting these bounds yields
\[
F(x_{k+1})
\le
F(x_k)-\bar{\eta}\,\|P_k^\top\nabla F(x_k)\|
+\frac{\bar{\eta}^2}{2}\frac{d}{r}L.
\]
Rearranging gives
\[
\bar{\eta}\,\|P_k^\top\nabla F(x_k)\|
\le
F(x_k)-F(x_{k+1})
+\frac{\bar{\eta}^2}{2}\frac{d}{r}L.
\]
Summing over $k=0,\dots,T-1$ and using $F(x_T)\ge F^*$ yields
\begin{equation}\label{eq:hp_rngd_base}
\bar{\eta}\sum_{k=0}^{T-1}\|P_k^\top\nabla F(x_k)\|
\le
\Delta_0+\frac{\bar{\eta}^2}{2}\frac{d}{r}LT.
\end{equation}

Define
\[
\psi_k \coloneqq \|P_k^\top\nabla F(x_k)\|,
\qquad
\phi_k \coloneqq \mathbb{E}[\psi_k\mid \mathcal{F}_k],
\qquad
D_k \coloneqq \bar{\eta}(\phi_{k-1}-\psi_{k-1}),\quad k\ge1,
\]
with $D_0\coloneqq0$.

Then $(D_k,\mathcal{F}_k)$ is a martingale difference sequence.
Moreover, since $0\le\psi_k\le\sqrt{d/r}\|\nabla F(x_k)\|$, we have
\[
|D_k|
\le
\bar{\eta}\sqrt{\frac{d}{r}}\,\|\nabla F(x_{k-1})\|.
\]

Define
\[
S_k \coloneqq \bar{\eta}\sqrt{\frac{d}{r}}\,\|\nabla F(x_{k-1})\|.
\]
Then $S_k$ is $\mathcal{F}_{k-1}$-measurable and satisfies
\[
\frac{D_k^2}{S_k^2}
\le
1.
\]
Consequently,
\[
\mathbb{E}\!\left[
\exp\!\left(\frac{D_k^2}{S_k^2}\right)
\Bigm|\mathcal{F}_{k-1}
\right]
\le e .
\]

Applying Lemma~11 in \emph{From Gradient Clipping to Normalization},
for any $\lambda>0$, with probability at least $1-\delta$,
\begin{equation}\label{eq:hp_mds}
\sum_{t=1}^T D_t
\le
\frac{3}{4}\lambda\sum_{t=1}^T S_t^2
+
\frac{1}{\lambda}\log\frac{1}{\delta}.
\end{equation}

\footnote{
Similar to \Cref{app:high:minibatch-RNSGD-prob},
to avoid division by zero and to make the argument rigorous,
one may regularize the denominator by replacing
$S_k$ with $S_k + \tfrac{1}{n}$ for $n\in\mathbb{N}$.
This yields a monotone decreasing sequence of events indexed by $n$,
to which Lemma~11 applies for each fixed $n$.
Letting $n\to\infty$ and using the continuity of probability
yields the stated inequality.
}

As in the proof of \Cref{thm:RNGD_exp},
\[
\phi_k
=
\mathbb{E}[\psi_k \mid \mathcal{F}_k]
=
\tau\,\|\nabla F(x_k)\|.
\]
Combining \eqref{eq:hp_mds} with \eqref{eq:hp_rngd_base},
we obtain, with probability at least $1-\delta$,
\begin{align}\label{eq:RNGDconv}
\bar{\eta}\tau\sum_{k=0}^{T-1}\|\nabla F(x_k)\|
&\le
\Delta_0
+
\frac{\bar{\eta}^2}{2}\frac{d}{r}LT
+
\frac{3}{4}\lambda\bar{\eta}^2\frac{d}{r}
\sum_{k=0}^{T-1}\|\nabla F(x_k)\|^2
+
\frac{1}{\lambda}\log\frac{1}{\delta}.
\end{align}

By $L$-smoothness, 
\[
\|\nabla F(x_{k+1})\|
\le
\|\nabla F(x_k)\|+\bar{\eta} L\sqrt{\frac{d}{r}}.
\]
Iterating this bound yields, for all $k\in\{0,\dots,T\}$,
\begin{equation}
\|\nabla F(x_k)\|
\le
\|\nabla F(x_0)\|
+
k\,\bar{\eta}  L\sqrt{\frac{d}{r}}
\le
\|\nabla F(x_0)\|
+
T\,\bar{\eta}  L\sqrt{\frac{d}{r}}.
\label{eq:grad_growth}
\end{equation}

We now choose
\[
\lambda
\coloneqq
\frac{2}{3}\,
\frac{\tau}{
\bar{\eta} \,\frac{d}{r}\Bigl(\|\nabla F(x_0)\|+\bar{\eta}  T L\sqrt{\frac{d}{r}}\Bigr)
}.
\]
Then, using \eqref{eq:grad_growth}, for each $k=0,\dots,T-1$ we have
\begin{align*}
\frac{3}{4}\lambda\,\bar{\eta} \,\frac{d}{r}\,\|\nabla F(x_k)\|
&\le
\frac{3}{4}\lambda\,\bar{\eta} \,\frac{d}{r}\Bigl(\|\nabla F(x_0)\|+\bar{\eta}  T L\sqrt{\frac{d}{r}}\Bigr) \\
&=
\frac{3}{4}\cdot
\frac{2}{3}\,\tau
=
\frac{\tau}{2}.
\end{align*}
Substituting this pointwise bound into \eqref{eq:RNGDconv} gives
\[
\bar{\eta} \frac{\tau}{2}\sum_{k=0}^{T-1}\|\nabla F(x_k)\|
\le
\Delta_0
+\frac{\bar{\eta} ^2TL}{2}\frac{d}{r}
+\frac{1}{\lambda}\log\frac{1}{\delta}.
\]

Multiplying both sides by $\frac{2}{\bar{\eta}\tau T}$, we obtain
\begin{align*}
\frac{1}{T}\sum_{k=0}^{T-1}\|\nabla F(x_k)\|
&\le
\frac{2\Delta_0}{\bar{\eta}\tau T}
+
\frac{\bar{\eta}}{\tau}\frac{d}{r}L
+
\frac{2}{\bar{\eta}\tau T}\cdot\frac{1}{\lambda}\log\frac{1}{\delta}.
\end{align*}

We have
\[
\frac{1}{\lambda}
=
\frac{3}{2}\,
\frac{\bar{\eta}\frac{d}{r}\bigl(\|\nabla F(x_0)\|+\bar{\eta}TL\sqrt{\frac{d}{r}}\bigr)}{\tau}.
\]
Substituting this into the last term yields
\begin{align*}
\frac{2}{\bar{\eta}\tau T}\cdot\frac{1}{\lambda}\log\frac{1}{\delta}
&=
\frac{3}{\tau^2}\left(\frac{d}{r}\frac{\|\nabla F(x_0)\|}{T}
+
\bar{\eta}L\Bigl(\frac{d}{r}\Bigr)^{3/2}\right)\log\frac{1}{\delta}.
\end{align*}

Therefore,
\begin{align*}
\frac{1}{T}\sum_{k=0}^{T-1}\|\nabla F(x_k)\|
&\le
\frac{2\Delta_0}{\bar{\eta}\tau T}
+
\frac{\bar{\eta}}{\tau}\frac{d}{r}L
+
\frac{3}{\tau^2}\left(\frac{d}{r}\frac{\|\nabla F(x_0)\|}{T}
+
\bar{\eta}L\Bigl(\frac{d}{r}\Bigr)^{3/2}\right)\log\frac{1}{\delta}.
\end{align*}

Substituting $\bar{\eta}=\eta T^{-u}$ gives
\begin{align*}
\frac{1}{T}\sum_{k=0}^{T-1}\|\nabla F(x_k)\|
\le\;
\frac{2\Delta_0}{\eta\tau T^{1-u}}
+\frac{\eta}{\tau T^{u}}\frac{d}{r}L
+\frac{3}{\tau^2}\left(
\frac{d}{r}\frac{\|\nabla F(x_0)\|}{T}
+\frac{\eta}{T^{u}}\Bigl(\frac{d}{r}\Bigr)^{3/2}L
\right)\log\frac{1}{\delta}.
\end{align*}

Finally, by \Cref{lem:init-grad-bound}, we have
\[
\|\nabla F(x_0)\|
\le
\sqrt{2\Delta_0L}.
\]
Substituting this bound into the last display yields
\begin{align*}
\frac{1}{T}\sum_{k=0}^{T-1}\|\nabla F(x_k)\|
\le\;
\frac{2\Delta_0}{\eta\tau T^{1-u}}
+\frac{\eta}{\tau T^{u}}\frac{d}{r}L
+\frac{3}{\tau^2}\left(
\frac{d}{r}\frac{\sqrt{2\Delta_0L}}{T}
+\frac{\eta}{T^{u}}\Bigl(\frac{d}{r}\Bigr)^{3/2}L
\right)\log\frac{1}{\delta}.
\end{align*}
This completes the proof.
\end{proof}

\section{Experimental Details}
\label{app:exp_details}

\subsection{Initialization.}
For each random seed, all methods start from the same initialization in both the synthetic quadratic experiment and character-level language modeling.

\subsection{Stepsize tuning}
For each method (and each subspace dimension $r$ when applicable), we tune the stepsize $\bar{\eta}$ over the
prescribed grid using three independent tuning runs, and then evaluate the selected $\bar{\eta}$ on five disjoint
evaluation runs under the same oracle-call budget.

For the synthetic quadratic experiment, the tuning score of a candidate step size $\bar{\eta}$ is defined as the average of
$\|\nabla F(x)\|=\|\Lambda x\|$ over the last $10$ iterations of the run.
We then average this score over the tuning runs (across tuning seeds) and select the $\bar{\eta}$ that minimizes the resulting averaged score. The resulting best stepsizes are summarized in \Cref{tab:bestlr-quad-horiz}.The selected stepsize is then fixed and used for all five evaluation runs.

For character-level language modeling, the tuning score of a candidate $\bar{\eta}$ is defined as the final
\emph{estimated validation loss} at the end of the oracle-call budget.
The validation loss is estimated by averaging the per-character negative log-likelihood over five randomly sampled minibatches.
We average this score over tuning seeds and select the $\bar{\eta}$ with the smallest averaged score. The resulting best stepsizes are summarized in \Cref{tab:bestlr-charlm-horiz}.
The selected stepsize is fixed and used for all five evaluation runs.

\begin{table}[t]
\centering
\caption{Best stepsizes for the synthetic quadratic with heavy-tailed noise ($d=100$, $\bar{B}=4$), selected by validation tuning.}
\small
\begin{tabular}{l c c c c}
\toprule
Method & $r$ & Best $\bar{\eta}$ ($\rho=4$) & Best $\bar{\eta}$ ($\rho=20$) & Best $\bar{\eta}$ ($\rho=100$) \\
\midrule
SGD      & $-$  & $10^{-3}$ & $10^{-2}$ & $10^{-3}$ \\
RS-SGD   & $20$ & $10^{-3}$ & $10^{-3}$ & $10^{-4}$ \\
RS-SGD   & $4$  & $10^{-5}$ & $10^{-4}$ & $10^{-5}$ \\
\midrule
NSGD     & $-$  & $10^{-2}$ & $10^{-2}$ & $10^{-2}$ \\
RS-NSGD  & $20$ & $10^{-2}$ & $10^{-3}$ & $10^{-3}$ \\
RS-NSGD  & $4$  & $10^{-3}$ & $10^{-3}$ & $10^{-4}$ \\
\bottomrule
\end{tabular}

\begin{minipage}{0.98\linewidth}
\centering
\textbf{Tuning grid: }
$\bar{\eta} \in \{10^{-6},10^{-5},10^{-4},10^{-3},10^{-2},10^{-1}\}$.
\end{minipage}

\label{tab:bestlr-quad-horiz}
\end{table}
\begin{table}[t]
\centering
\caption{Best stepsizes for CharLM ($d=5079$ for PTB and $d=13700$ for WikiText-2, $\bar{B}=128$), selected by validation tuning (minimizing the final validation loss).}
\small
\begin{tabular}{l c c c}
\toprule
Method & $r$ & Best $\bar{\eta}$ (PTB) & Best $\bar{\eta}$ (WikiText-2) \\
\midrule
SGD      & $-$   & $1$        & $1$ \\
RS-SGD   & $500$ & $1$        & $1$ \\
RS-SGD   & $50$  & $10^{-1}$  & $10^{-1}$ \\
RS-SGD   & $5$   & $10^{-2}$  & $10^{-2}$ \\
\midrule
NSGD     & $-$   & $1$        & $1$ \\
RS-NSGD  & $500$ & $1$        & $1$ \\
RS-NSGD  & $50$  & $10^{-1}$  & $10^{-1}$ \\
RS-NSGD  & $5$   & $10^{-2}$  & $10^{-2}$ \\
\bottomrule
\end{tabular}

\begin{minipage}{0.98\linewidth}
\centering
\textbf{Tuning grid: }
$\bar{\eta} \in \{10^{-3},10^{-2},10^{-1},1,10\}$.
\end{minipage}

\label{tab:bestlr-charlm-horiz}
\end{table}

\subsection{Logging and evaluation}
For the synthetic quadratic experiment, we log the true gradient norm
$\|\nabla F(x)\|=\|\Lambda x\|$ along the trajectory. In all plots, each curve reports the mean over five
evaluation runs, and the shaded region indicates $\pm 1$ standard deviation.

For character-level language modeling, we log training/validation/test losses at each logging point using the same evaluation procedure applied separately to each data split.
To reduce evaluation overhead on CPU, we compute each reported loss as the average per-character negative log-likelihood over five randomly sampled mini-batches.
In all plots, each curve reports the mean over five evaluation runs (five random seeds), and the shaded region indicates $\pm 1$ standard deviation across seeds.
\subsection{Validation/test results}
At each logging point, we record the validation and test losses in addition to the training loss using the
same evaluation procedure described above. \Cref{fig:charlm-valid-test} reports the resulting validation and test trajectories.
Consistent with the training results, RS-NSGD with $r\in\{5,50\}$ achieves the lowest losses under the oracle-call budget
across both datasets.

\begin{figure*}[t]
  \centering

  \begin{minipage}[t]{0.4\textwidth}
    \centering
    \includegraphics[width=\linewidth]{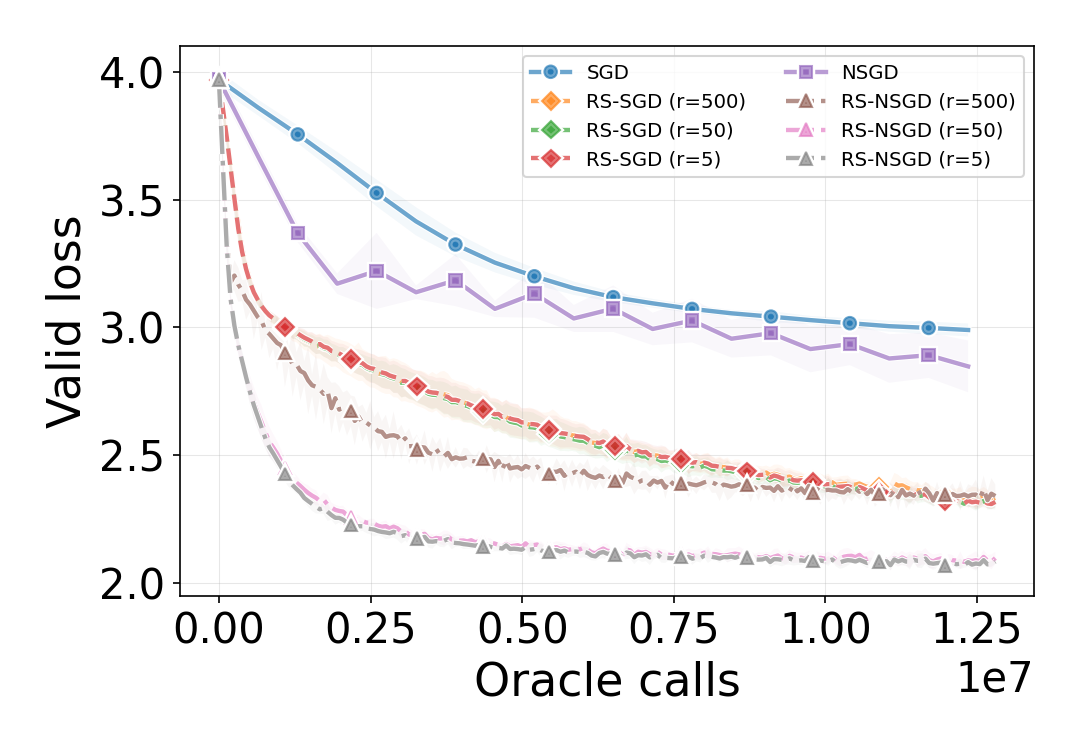}
    {\small\textbf{(a)} PTB (validation)}
  \end{minipage}\hspace{0.02\textwidth}
  \begin{minipage}[t]{0.4\textwidth}
    \centering
    \includegraphics[width=\linewidth]{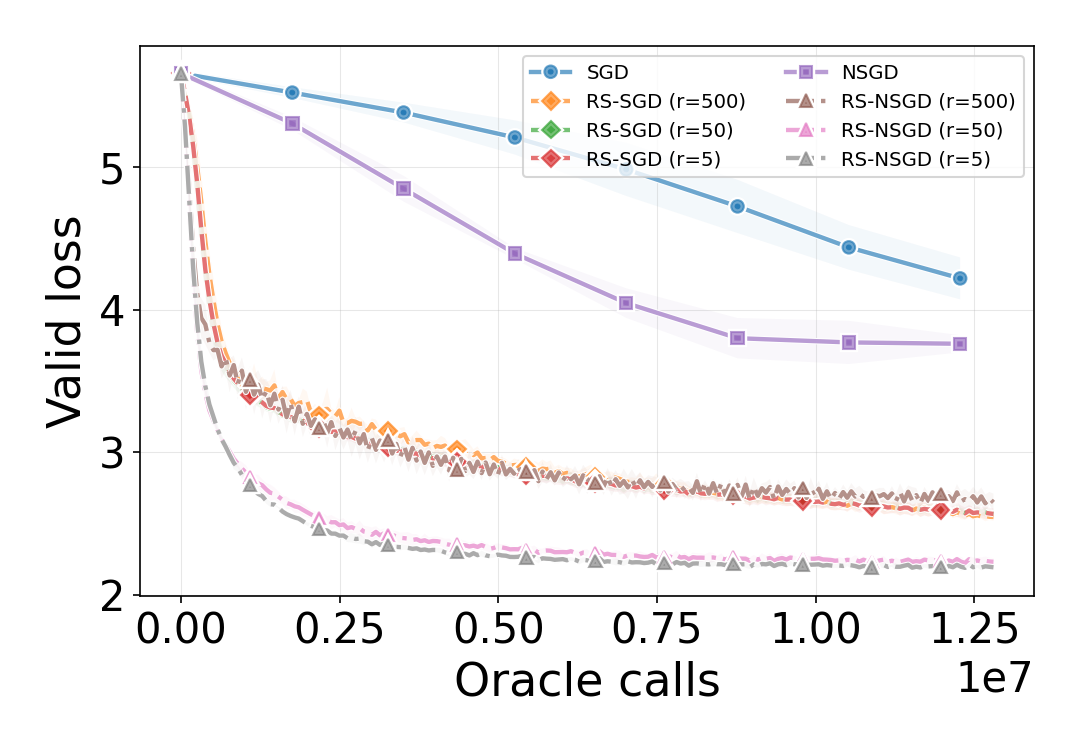}
    {\small\textbf{(b)} WikiText-2 (validation)}
  \end{minipage}

  \begin{minipage}[t]{0.4\textwidth}
    \centering
    \includegraphics[width=\linewidth]{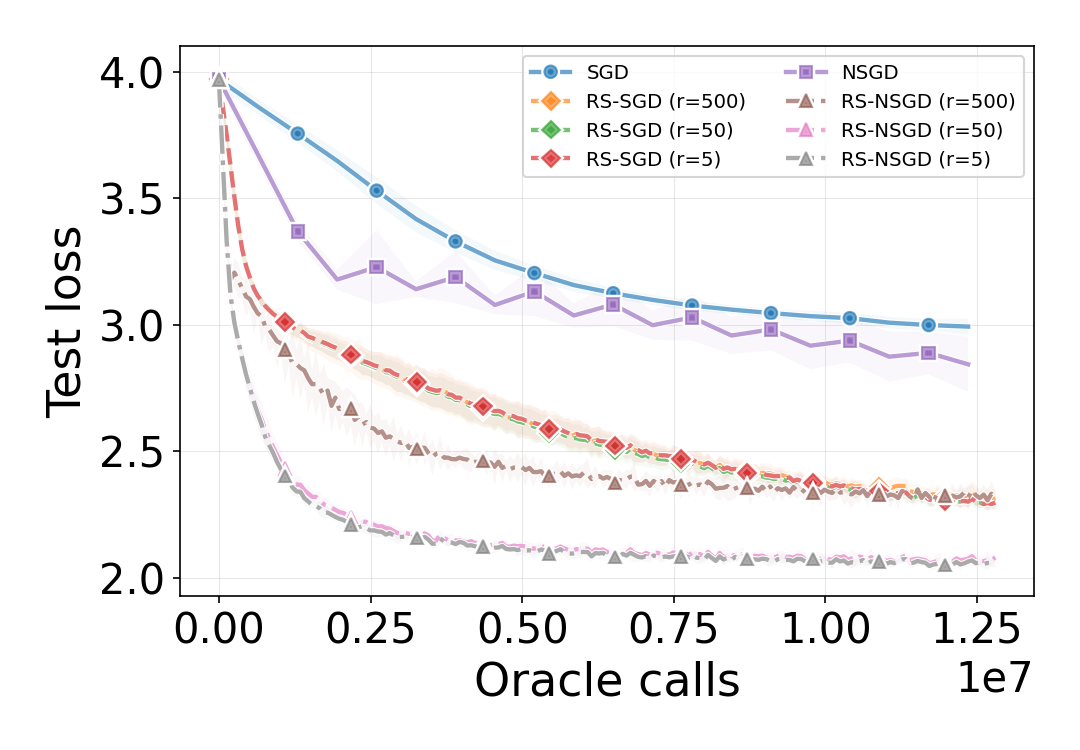}
    {\small\textbf{(c)} PTB (test)}
  \end{minipage}\hspace{0.02\textwidth}
  \begin{minipage}[t]{0.4\textwidth}
    \centering
    \includegraphics[width=\linewidth]{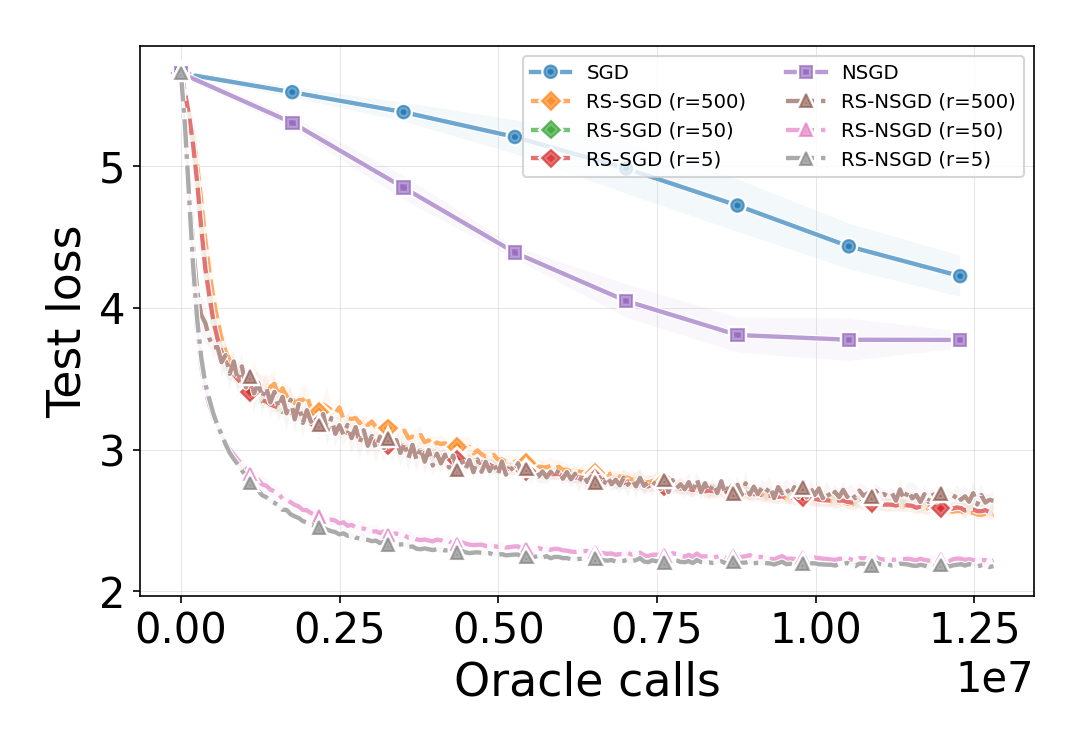}
    {\small\textbf{(d)} WikiText-2 (test)}
  \end{minipage}

  \caption{Character-level language modeling on PTB ($d=5079,\bar{B}=128$) and WikiText-2 ($d=13700,\bar{B}=128$): oracle calls vs.\ validation/test loss (5 seeds; mean $\pm$ 1 std.).}

  \label{fig:charlm-valid-test}
\end{figure*}

\end{document}